\documentclass[11pt]{amsart}
\usepackage{amsmath,amsfonts,amssymb,amsthm,stmaryrd}
\usepackage{enumitem}
\usepackage{bbm}
\usepackage{tikz}
\usetikzlibrary{decorations.pathreplacing, positioning}
\usepackage{graphicx}
\usepackage{pgfplots}
\usetikzlibrary{backgrounds}
\usepackage{xcolor}
\usepackage{yfonts}
\usepackage{setspace}
\usepackage{comment}
\usepackage[T2A,T1]{fontenc}     
\newtheorem{theorem}{Theorem}
\newtheorem{lemma}[theorem]{Lemma}
\newtheorem{corollary}[theorem]{Corollary}
\newtheorem{proposition}[theorem]{Proposition}
\theoremstyle{definition}
\newtheorem{definition}[theorem]{Definition}
\newtheorem*{theorem*}{Theorem}
\newtheorem*{corollary*}{Corollary}
\newtheorem{remark}[theorem]{Remark}

\newcommand{\mbbn}{\mathbb N}

\newcommand{\mbbr}{\mathbb R}
\newcommand{\eps}{\varepsilon}
\def\diam{\operatorname{diam}}
\def\Card{\operatorname{Card}}
\def\mod{\operatorname{Mod}}
\def\dist{\operatorname{dist}}
\def\H{{\mathcal H}}

\usepackage{scalerel}[2014/03/10]
\usepackage[usestackEOL]{stackengine}
\def\intavg{\,\ThisStyle{\ensurestackMath{%
    \stackinset{c}{0\LMpt}{c}{0\LMpt}{\SavedStyle-}{\SavedStyle\phantom{\int}}}%
    \setbox0=\hbox{$\SavedStyle\int\,$}\kern-\wd0}\int}

\numberwithin{theorem}{section} \numberwithin{equation}{section}
\numberwithin{figure}{section}
\counterwithin{figure}{section}

\keywords{Quasiconformal mappings, Ahlfors regularity, Poincar\'e inequality, modulus of path families, absolute continuity}
\thanks{{\it 2020 Mathematics Subject Classification.} Primary 30C65, 30L10; Secondary 28A80, 30L05} 
\thanks{P.~K.~has been supported by the Research Council of Finland, grant number 364210.}
\author{Behnam Esmayli}
\address{Department of Mathematics, Link\"oping University, SE-581 83 Link\"oping, Sweden
}
\email{behnam.esmayli@liu.se}
\author{Pekka Koskela}
\address{Department of Mathematics and Statistics, P.O. Box 35 (MaD), FI-40014, University of Jyv\"askyl\"a, Finland
}
\email{pekka.j.koskela@jyu.fi}

\author{Khanh Nguyen}
\address{University of Science, Vietnam National University, Hanoi, Vietnam
}
\email{khanhnn@vnu.edu.vn}
\begin{document}
\title[Exceptional Sets for Quasiconformal Mappings]{Exceptional Sets for Quasiconformal Mappings in General Metric Spaces II}

\begin{abstract}
A homemorphism between domains in $\mathbb R^n$, $n\ge 2$ is quasiconformal, with its intricate analytic and geometric consequences, if the (pointwise) linear dilatation -- a purely metric quantity -- is uniformly bounded. Gehring proved that it will suffice to verify the uniform bound up to a set of measure zero as long as we can show that the dilatation is finite outside a subset of finite Hausdorff--$(n-1)$ measure. In short, we say that we can allow an exceptional codimension $1$ subset. 

In the metric setting, it has been proved, roughly speaking, that one can allow an exceptional codimension $p$ subset, $p \ge 1$, if the source space satisfies a $p$-Poincar\'e inequality.

We prove, effectively, the sharpness of the latter claim.
\end{abstract}

\maketitle

\tableofcontents
\section{Introduction}
Quasiconformal homeomorphisms between domains in $\mathbb R^n$, $n\ge 2$, enjoy many regularity properties. They are absolutely continuous on almost every line, belong to Sobolev spaces of higher order than in their definition, are (locally) H\"older continuous, have the Lusin N property, etc. These properties are tied to and follow from the equivalence of the so-called \textit{metric, analytic, and geometric} definitions of quasiconformality in $\mathbb R^n$. Fundamental works in the Euclidean setting are due to Ahlfors, Gehring, and V\"ais\"al\"a.

Study of quasiconformal mappings in non-Riemannian settings (e.g.\ in the Heisenberg groups and more general Carnot groups) was motivated by complex dynamics and rigidity questions in geometry. Important results were obtained by Mostow, Pansu, Kor\'anyi, Bourdon, Pajot, and many others. We refer to the introduction of~\cite{bal-kosk-rog} for references and a brief history of the major developments in the theory of quasiconformal mappings in non-Euclidean settings.

Heinonen and Koskela~\cite{Hei-Kosk-95} found techniques that were metric and did not depend on the special structures (e.g.\ foliation) of the underlying Euclidean or Carnot groups. In~\cite{HeiKo-Acta}, they introduced the so-called ``metric spaces with controlled geometry'' which unified many of the existing results about quasiconformal mappings.
\subsection{Removability of sets in metric quasiconformality} Let $f\colon X \to Y$ be a homeomorphism between two metric spaces (without any isolated points). For $r>0$, set
\begin{align*}
L_f(x,r) =&\sup\{d(f(y),f(x)): d(y,x)\le r\}, \\
l_f(x,r) =&\inf\{d(f(y),f(x)): d(y,x)\ge r\},    
\end{align*}
and let
\[
H_f(x)=  \limsup\limits_{r \to 0} \frac{L_f(x,r)}{l_f(x,r)}.
\]
We say that a homeomorphism $f\colon X \to Y$ is \emph{metrically quasiconformal} if there exists some $ 1 \leq H < \infty$ such that $H_f(x)\le H$ for every $x \in X$.

The definition, requiring a distortion control at \emph{every} point and \emph{all} small scales, is quite inconvenient to verify in practice. In the Euclidean setting, already Gehring~\cite{Geh1,Geh:63} proved that it suffices to verify the bound {a priori} only outside a set of $\sigma$-finite $\H^{n-1}$-measure.

It was proved in Heinonen-Koskela~\cite{Hei-Kosk-95} that, in the Euclidean setting, one can replace $\limsup$ with $\liminf$, i.e.\ for quasiconformality it suffices to have a uniform bound on
$$
h_f(x)=  \liminf\limits_{r \to 0}  \frac{L_f(x,r)}{l_f(x,r)},
$$
i.e., a bound that is a priori on \emph{some} sequence of small scales.

Finally, Kallunki-Koskela~\cite{Kallunki-Kosk-2000, Kallunki-Kosk-2003} proved that one can work with $h_f$ and at the same time allow an exceptional set:
\begin{theorem}[{\cite[Corollary 1.2]{Kallunki-Kosk-2003}}]
Let $f \colon \Omega \to \Omega'$ be a homeomorphism between domains in $\mathbb{R}^n, n\ge 2$. Suppose that $E \subset \Omega$ has $\sigma$-finite $\H^{n-1}$-measure, $h_f(x) <\infty$ at every $x \in \Omega \setminus E$, and for some $H < \infty$,  $h_f(x) \le H$ almost everywhere on $\Omega$. Then $f$ is quasiconformal, i.e.\  for some $H'<\infty$, we have $H_f(x) \le H'$  for every $x \in \Omega$. 
\end{theorem}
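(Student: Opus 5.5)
The plan is to reduce to the analytic definition: show that $f \in W^{1,n}_{loc}(\Omega;\mathbb R^n)$ with $|Df(x)|^n \le K J_f(x)$ for almost every $x$. By Gehring's theorem on the equivalence of the analytic and metric definitions in $\mathbb R^n$, this gives $H_f \le H'$ everywhere, with $H' = H'(n,H)$. Since the same hypotheses hold for $f^{-1}$ (using Gehring's regularity afterwards) this symmetry is not needed here. The proof therefore has three steps: Sobolev regularity, the pointwise distortion inequality, and the citation of the analytic-to-metric equivalence.

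\emph{Step 1 (ACL property via the liminf condition and the exceptional set).} At each $x \in \Omega\setminus E$ with $h_f(x)<\infty$, pick radii $r_j \to 0$ along which $L_f(x,r_j) \le h\, l_f(x,r_j)$ for some finite $h = h(x)$. Then the image of $B(x,r_j)$ is comparable to a ball, so
\[
L_f(x,r_j)^n \lesssim h^n\,|f(B(x,r_j))|.
\]
Define the upper derivative $L_f(x) = \limsup_{r\to0} L_f(x,r)/r$; along the good radii it is controlled by the volume derivative. To show that $f$ is absolutely continuous on almost every line parallel to a coordinate axis, fix a cube $Q$ and its projection $P$ onto the coordinate hyperplane. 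The set $E$ meets only $\mathcal H^{n-1}$-null-projection many lines in an uncountable way: since $E$ is $\sigma$-finite for $\H^{n-1}$, for a.e.\ $y \in P$ the fiber $E \cap (\{y\}\times\mathbb R)$ is countable. On such a line, every point outside a countable set has $h_f<\infty$. Then I would run the Vitali-type covering argument of Gehring/Heinonen--Koskela, using the $\liminf$ radii with a $5r$-covering on the line segment. Countable sets cause no gap in length, since continuity lets us cover each exceptional point by a tiny image. This bounds the length of $f(\text{segment})$ over a compact subset of measure zero on the line by zero, and bounds length integrals by $\int (\text{volume derivative})^{1/n}\cdots$. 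This mirrors the one-dimensional argument used for Gehring's theorem, adapted to $h_f$ as in \cite{Kallunki-Kosk-2000}.

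\emph{Step 2 (integrability and distortion).} Where $h_f \le H$, take good radii $r_j$, compare $L_f(x,r_j)^n$ with $H^n |f(B(x,r_j))|$, and use Lebesgue differentiation of the image measure $\mu(A)=|f(A)|$. This gives $|Df(x)|^n \le C(n)H^n J_f(x)$ almost everywhere, where $Df$ is the a.e.\ derivative that exists by Step 1 (Gehring--Lehto / Väisälä: ACL together with the estimate below). Integrating, $\int_K |Df|^n \le C H^n |f(K)| < \infty$, so $f \in W^{1,n}_{loc}$.

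The main obstacle is Step 1: extracting absolute continuity on lines when only $\liminf$ control is available and when it fails on $E$. The good radii depend on the point, so a covering must be done at scales chosen pointwise. The key is a covering lemma for families of balls where each center has arbitrarily small admissible radii, combined with the Fubini-type fact that $\sigma$-finite $\H^{n-1}$ sets meet a.e.\ line in countably many points. I would first try to prove that the length of the image of any line segment, over a set of line-measure zero avoiding the countable bad set, vanishes. This uses the inequality $\operatorname{diam} f(B(x,r_j)) \le 2H r_j^{}\,(|f(B)|/|B|)^{1/n}$ summed via Hölder over a disjoint subfamily.
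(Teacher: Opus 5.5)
The paper gives no proof of this statement; it is quoted from Kallunki--Koskela, so your outline can only be judged on its own. Your architecture is the natural one: ACL, then $W^{1,n}_{loc}$ with $|Df|^n\le CH^nJ_f$, then Gehring's analytic-to-metric theorem. Step~2 is fine once Step~1 is in hand. But Step~1 carries the entire content of the theorem, and it is not proved.

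On a good line $\{y\}\times[a,b]$, the real task is to show that $f$ maps a line-null set $Z$ to a set of zero length. Here $Z$ consists of the points where $h_f>H$, or where $|f(B(x,r))|/|B(x,r)|$ is unbounded. This is exactly where absolute continuity is decided. The map \eqref{eq:QC-map-f}, viewed on $\mathbb R^2$, is a translation near every point off $\mathbb R\times\textswab{D}$, so $h_f=1$ a.e.\ and it even preserves area. Yet it fails to be absolutely continuous on every vertical line, solely because of the null set $\textswab{D}$ on each line. So $h_f(x)<\infty$ must be used quantitatively at the points of $Z$. There, all you have is $\operatorname{diam} f(B(x,r_j))\lesssim h_f(x)\,|f(B(x,r_j))|^{1/n}$ along a point-dependent sequence $r_j=r_j(x)\to 0$. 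Your inequality $\operatorname{diam} f(B(x,r_j))\le 2Hr_j(|f(B)|/|B|)^{1/n}$ gives nothing on $Z$. Replacing $H$ by $h_f(x)$ is harmless after splitting $Z$ according to the size of $h_f$. The problem is that the density factor may be unbounded, so making $\sum_i r_i$ small does not make $\sum_i \operatorname{diam} f(B_i)$ small. Your claim that the image of a null set has zero length is therefore asserted, not derived.

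The covering tools you name do not close this gap.
\begin{itemize}
\item A Vitali $5r$-covering is incompatible with a $\liminf$ hypothesis: control at $r_j(x)$ says nothing about $L_f(x,5r_j(x))$. On a line you should instead use centred intervals with overlap at most $2$; the corresponding balls then overlap at most twice.
\item Even so, the H\"older step $\sum_i|f(B_i)|^{1/n}\le\bigl(\sum_i|f(B_i)|/r_i^{n-1}\bigr)^{1/n}\bigl(\sum_i r_i\bigr)^{(n-1)/n}$ needs a uniform bound on $\sum_i|f(B_i)|/r_i^{n-1}$. In Gehring's argument this bound comes from the cylinder measure $\phi(A)=|f(A\times[a,b])|$ and $\phi'(y)<\infty$. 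That works because under the $\limsup$ hypothesis each point is controlled at \emph{all} radii below a threshold. All balls can then be taken at one scale $\rho$, inside one cylinder of width comparable to $\rho$.
\item With point-dependent radii, $\phi'(y)$ only gives a bound of order $\phi'(y)$ for each dyadic scale separately, and summing over scales loses control.
\end{itemize}
A covering argument that overcomes this is the missing idea. In the setting of this paper, it is the Sobolev part of Theorem~\ref{bakoro}. Its Euclidean case $Q=n$, $p=1$, together with the $1$-Poincar\'e inequality of $\mathbb R^n$, is what you need.

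Smaller points:
\begin{itemize}
\item The quantity controlled along good radii is $\liminf_{r\to0}L_f(x,r)/r$, not the $\limsup$.
\item This $\liminf$ bounds the partial derivatives: $|\partial_i f(x)|\le H\bigl(\lim_{r\to0}|f(B(x,r))|/|B(x,r)|\bigr)^{1/n}$ for a.e.\ $x$, which yields $W^{1,n}_{loc}$.
\item A.e.\ differentiability should be deduced only afterwards, from V\"ais\"al\"a's theorem. For $n\ge3$ the Gehring--Lehto theorem is not available, and ACL alone is not enough.
\item The hypotheses are not symmetric under $f\mapsto f^{-1}$, though you do not use this.
\end{itemize}
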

Such reductions in assumptions were motivated by and found applications in complex dynamics, see e.g.\ \cite{przytycki-rohde,haissinsky,smania,kozlovski-shen-strien,graczyk-smirnov,graczyk-smirnov}. Observe that the assumptions on $h_f$ are stronger than simply requiring $h_f(x) \le H$ almost everywhere. In fact, the homeomorphism $f$ defined in \eqref{eq:QC-map-f}, when viewed as on $\mathbb R^2$, shows the necessity of assuming more that $\operatorname*{ess\,sup} h_f < \infty$.

In metric setting, one is interested in minimal assumptions on $H_f$, or preferably on $h_f$, that still imply regularity properties for homeomorphisms $f\colon X \to Y$, for example, quasisymmetry, Sobolev regularity, etc. As in the Euclidean setting, study of such properties uses the relationships between \textit{metric}, \textit{analytic}, and \textit{geometric} (modulus) definitions of quasiconformality. ``A key point in connecting these definitions is absolute continuity on paths'' (\cite{kosk-wild-survey}).  

The starting point of our investigation is the following result due to Balogh-Koskela-Rogovin:
\begin{theorem}[\cite{bal-kosk-rog,Kosk-wild}]\label{bakoro}
    Let $X$ and $Y$ be proper metric measure spaces with $X$ locally Ahlfors $Q$-regular, $Q>1$, and let $f\colon X \to Y$ be a homeomorphism. Suppose that there exists a set $E \subset X$ with $\sigma$-finite $\H^{Q-p}$-measure, $1\le p<Q$, such that $h_f(x)<\infty$ at every $x \in X\setminus E$, and there exists $H<\infty$ such that $h_f(x) \le H$ a.e.\ on $X$. Finally, assume that $Y$ is \emph{locally Ahlfors $Q$-regular off $f(E)$}. Then, $f \in N^{1,p}_{loc}(X,Y)$. If, in addition, $X$ supports a $p$-Poincar\'e inequality, then $f \in N^{1,Q}_{loc}(X,Y)$ and, in particular, $f$ is absolutely continuous on $Q$-a.e.\ path.
\end{theorem}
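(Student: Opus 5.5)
The approach follows the Heinonen--Koskela/Balogh--Koskela--Rogovin scheme. We build an upper gradient for $f$ from the dilatation bound via covering arguments, and use the exceptional set $E$ only through path families of small $p$-modulus. Fix a compact set $K\subset X$. For $x\in X\setminus E$ and small scales $r$ where $L_f(x,r)\le (h_f(x)+1)\,l_f(x,r)$, the ball $B(x,r)$ is mapped into $B(f(x),L_f(x,r))$ and its image contains $B(f(x),l_f(x,r))$. Local $Q$-regularity of $Y$ off $f(E)$ then gives
\[
L_f(x,r)^Q\lesssim \mu_Y\bigl(f(B(x,r))\bigr)
\]
for a.e.\ $x$, with the constant controlled by $H$. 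For a scale $\delta>0$ I define
\[
g_\delta=\sum_i \frac{L_f(x_i,r_i)}{r_i}\,\chi_{B(x_i,2r_i)},
\]
where $\{B(x_i,r_i)\}$ is a disjointed (5r-covering) subfamily of good balls with $r_i<\delta$ covering $K\setminus(E\cup N)$, and $N$ is the null set where $h_f>H$.

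The first main estimate is the $L^Q$ bound. By bounded overlap of the doubled balls (Ahlfors regularity of $X$), we get
\[
\int g_\delta^Q\lesssim\sum_i L_f(x_i,r_i)^Q\lesssim\sum_i\mu_Y\bigl(f(B(x_i,r_i))\bigr)\le \mu_Y\bigl(f(\text{nbhd of }K)\bigr)<\infty,
\]
using injectivity of $f$ and disjointness of the balls. The points of $E$ (where only $h_f<\infty$ is known) and the points of $N$ (where $h_f\le H$ fails) are covered separately. On $E$, the $\sigma$-finiteness of $\H^{Q-p}$ lets us write $E=\bigcup_k E_{k,m}$, with $E_{k,m}$ the set where $h_f<m$ at scales below $1/k$. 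Each piece is covered by balls $B(y_j,s_j)$ with $\sum s_j^{Q-p}\lesssim \H^{Q-p}(E_{k,m})+1$, and we add terms $\frac{L_f(y_j,s_j)}{s_j}\chi_{B(y_j,2s_j)}$ with weights $\varepsilon_j$. These terms are not $L^Q$-controlled, but they lie in $L^p$ with small norm since $\sum s_j^{Q}\cdot s_j^{-p}$ is finite. This is exactly where codimension $p$ enters.

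The second step shows that $g_\delta$ plus the $E$-terms is an approximate upper gradient along paths. For any rectifiable $\gamma$ of length at least $\delta$, chaining balls along $\gamma$ gives $d(f(\gamma(a)),f(\gamma(b)))\le C\int_\gamma g$. The paths that fail are those hitting $N$ in positive length or meeting $E$ badly. The former family has $Q$-modulus zero, via Fuglede's lemma, because $\mu(N)=0$. For the latter, the $p$-modulus of paths passing through $E$ is zero (indeed has zero $p$-capacity) because $\H^{Q-p}$ is $\sigma$-finite; compare the test functions $\sum_j s_j^{-1}\chi_{B(y_j,2s_j)}$. Letting $\delta\to0$ and applying Mazur's lemma to a weakly convergent subsequence then gives a $p$-weak upper gradient in $L^p_{loc}$, hence $f\in N^{1,p}_{loc}$. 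I expect the main obstacle to be this exceptional-path bookkeeping: the modulus exponent available near $E$ is only $p$, so the limit gradient is a $p$-weak rather than a $Q$-weak upper gradient.

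The upgrade under a $p$-Poincaré inequality goes as follows. With $f\in N^{1,p}_{loc}$ and $X$ supporting a $p$-Poincaré inequality, the minimal $p$-weak upper gradient $g_f$ satisfies $g_f(x)\le \limsup_{r\to0} C\,L_f(x,r)/r$ a.e. This is the standard pointwise Lipschitz-constant characterization of Cheeger, valid under doubling plus Poincaré, where $\operatorname{Lip}f$ is a.e.\ comparable to $g_f$ and $\liminf$-type bounds suffice with $h_f\le H$. The Poincaré inequality also prevents $g_f$ from concentrating on $E$, since $\mu(E)=0$. Combining this with $L_f(x,r)^Q\lesssim\mu_Y(f(B(x,r)))$ yields $g_f^Q\lesssim J_f$, the Radon--Nikodym derivative of $f^*\mu_Y$, which is locally integrable. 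Hence $g_f\in L^Q_{loc}$ and $f\in N^{1,Q}_{loc}$. Absolute continuity on $Q$-a.e.\ path then follows from the standard properties of Newtonian functions.
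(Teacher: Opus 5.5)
Your proposal has two genuine gaps, one in each step. (The paper only quotes this theorem and gives no proof, so I am measuring your sketch against what any correct argument has to do.)

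\textbf{Step 2 (the upgrade to $N^{1,Q}$).} The closing implication ``$g_f\in L^Q_{loc}$, hence $f\in N^{1,Q}_{loc}$'' is the whole difficulty, and it is false in general. The minimal $p$-weak upper gradient $g_f$ controls $f$ only off a $p$-null family of curves, and such a family can have positive $Q$-modulus.

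The paper's own example shows this. There $f$ lies in $N^{1,p}$ by the first half of the theorem and is an isometry on every cube, so $g_f\le 1$ a.e. Yet $f$ fails to be absolutely continuous on families of positive $q$-modulus for all $q\ge p+\eps$, even though $\mathbf X$ supports a $(p+\eps)$-Poincar\'e inequality.

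None of your uses of the Poincar\'e inequality addresses this:
\begin{itemize}
\item The bound $g_f\le\operatorname{lip} f$ a.e.\ is elementary and needs no Poincar\'e inequality. On any curve where $f$ is absolutely continuous, the metric derivative of $f\circ\gamma$ at $t$ is at most $\operatorname{lip} f(\gamma(t))$.
\item Cheeger's theorem concerns Lipschitz functions. It does not change the exponent of the exceptional curve family.
\item ``Preventing $g_f$ from concentrating on $E$'' has no content, since $\mu(E)=0$.
\end{itemize}

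The missing step is an exponent upgrade, and it needs the Poincar\'e inequality with the same $p$. Apply it to $u_z=d(f(\cdot),z)$, with $z$ in a countable dense subset of $Y$; each $u_z$ has $g_f$ as a $p$-weak upper gradient. The usual telescoping argument gives
\[
d(f(x),f(y))\lesssim d(x,y)\Bigl(\bigl(\mathcal M_{Cd(x,y)}g_f^p(x)\bigr)^{1/p}+\bigl(\mathcal M_{Cd(x,y)}g_f^p(y)\bigr)^{1/p}\Bigr).
\]
Since $Q/p>1$, the maximal operator is bounded on $L^{Q/p}$, so $(\mathcal M g_f^p)^{1/p}\in L^Q_{loc}$. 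Thus $f$ has a Haj\l asz gradient in $L^Q_{loc}$. Because $f$ is continuous, a constant multiple of that gradient is a $Q$-weak upper gradient, which gives $f\in N^{1,Q}_{loc}$.

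\textbf{Step 1 (the null set $N$).} You dispose of $N=\{h_f>H\}$ by Fuglede's lemma, which only removes curves spending positive length in $N$. But your chaining needs every point of $\gamma$ to lie in a ball on which $\diam f$ is controlled. A curve can meet a null set in zero length and still have $f\circ\gamma$ fail to be absolutely continuous there.

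For instance, take $(x,y)\mapsto(x+\phi(y),y)$ on $\mathbb R^2$, with $\phi$ the Cantor function of $\textswab{D}$. It has $h_f=1$ off the null set $\mathbb R\times\textswab{D}$, and every vertical line meets that set in length zero. Yet $f$ is absolutely continuous on no vertical line. Your argument, run with $E=\emptyset$ and $N=\mathbb R\times\textswab{D}$, never looks at $h_f$ on $N$, so it would ``prove'' $f\in N^{1,p}_{loc}$.

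The hypothesis $h_f<\infty$ on $X\setminus E$ is needed exactly on $N\setminus E$. You read it backwards and used finiteness of $h_f$ on $E$, where it is not assumed. It is not needed there either: the $E$-terms are small in $L^p$ simply because $L_f(y_j,s_j)\to 0$ uniformly while $\sum_j s_j^{Q-p}$ stays bounded.

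Your covering also has a defect: disjoint Vitali balls do not cover, and their enlargements need not be at good scales when only the $\liminf$ is controlled. One repair that fixes both problems is to select intervals of overlap at most two on the parameter interval of each curve. Set $\nu(A):=\mu_Y(f(A))$. For $x\notin E$ with $h_f(x)<m$ and a good radius $r$,
\[
L_f(x,r)\lesssim m\,\nu(B(x,r))^{1/Q}\lesssim m\,r\,\bigl(\mathcal M\nu(z)\bigr)^{1/Q}\quad\text{for all } z\in B(x,r).
\]
The function $(\mathcal M\nu)^{1/Q}$ lies in weak $L^Q_{loc}$, and $p<Q$. Hence its contribution on open neighbourhoods of small measure around $\{x\in N\setminus E: h_f(x)<m\}$ is small in $L^p$.

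With $m=H+1$, the same estimate handles the good set. Note that it yields an upper gradient in $L^p_{loc}$, not in $L^Q_{loc}$; the $L^Q$ bound comes afterwards from $\operatorname{lip} f\lesssim J_f^{1/Q}$ a.e.

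Finally, for $p=1$ the curves through $E$ need not form a $1$-null family (take a segment in $\mathbb R^2$). So in that case the $E$-terms, not a capacity argument, must carry the proof.
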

Recall that proper means that closed bounded sets are compact. Other terminology will be explained in Section~\ref{sec:prelim}. The less standard notion of $Y$ being ``locally Ahlfors $Q$-regular off $f(E)$'' is given in Definition~\ref{def:Ahlfors-off}. In our settings, being in the Newtonian-Sobolev space $N^{1,p}_{loc}$ is simply equivalent to having an $L^p_{loc}$-integrable upper-gradient.

For more recent results of this nature, see~\cite{Williams:14}, \cite{Ntala24:metric-def} and~\cite{lahti:Zhou:24}. The homeomorphism in Theorem~\ref{bakoro}, under some additional natural assumptions on the spaces, enjoys other regularity properties. For example, if $X$ satisfies a  $p$-Poincar\'e inequality and $Y$ is so-called \textit{linearly locally connected}, then \cite[Theorem~5.2, Remark~5.3]{bal-kosk-rog} show that $f$ is analytically quasiconformal and it is locally quasisymmetric, hence geometrically quasiconformal. We will not pursue such details and focus only on Sobolev regularity in the rest of this paper.

\subsection{Main results}
It is worth remarking that the $N^{1,p}$-regularity part of the conclusion of Theorem~\ref{bakoro} does not require any Poincar\'e inequality. However, Koskela-Wildrick~\cite{Kosk-wild} showed that $N^{1,Q}$-regularity can fail in the absence of a $p$-Poincar\'e inequality. They prove this by constructing an example in the plane, which is the inspiration for the construction of our main object in the current paper.

Although their example cannot support a $p$-Poincar\'e inequality due to Theorem~\ref{bakoro}, Koskela-Wildrick~\cite{Kosk-wild} conjecture that it may support some weaker Poincar\'e inequality, i.e.\ a $q$-Poincar\'e inequality for some $q>p$. Using our construction, we confirm their conjecture, as the two constructions are practically bi-Lipschitz equivalent (see Section~\ref{sec:bilip-kosk-wil}). 

We prove not only the necessity of a $p$-Poincar\'e inequality on $X$, but also the sharpness of $p$ in order to have the full Sobolev regularity in Theorem~\ref{bakoro}.
\begin{theorem}\label{main2}
 For every $ p \in (1,2)$ and every $\eps >0$, there exist compact spaces $X$ and $Y$, a closed set $E \subset X$, and a homeomorphism $f\colon X \to Y$ such that
\begin{itemize}[topsep=0.3ex]
    \item $X$ is compact, geodesic, and Ahlfors $2$-regular,
    \item $Y$ is locally Ahlfors $2$-regular off $f(E)$,
    \item $0< \H^{2-p}(E) < \infty$,
    \item $H_f(x) = 1$ at every $x \in X \setminus E$,
    \item $X$ supports a $(p+\eps)$-Poincar\'e inequality,
\end{itemize}
but for every $q \ge p+\eps$, there exists a family of paths with positive $q$-modulus such that $f$ is not absolutely continuous on them. In particular, $ f \notin N^{1,q}_{loc}(X,Y)$. Moreover, $X$ and $Y$ can be chosen to be compact subsets of $\mathbb R^2$, with the induced metric and measure.
\end{theorem}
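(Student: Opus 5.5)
The plan is to build $X$ as a compact planar set: the unit square $[0,1]^2$ with the Euclidean metric, from which we excise a self-similar family of thin slits or rectangles accumulating on a Cantor-type set $E$. The parameters are tuned so that $\H^{2-p}(E)\in(0,\infty)$; since $2-p\in(0,1)$, we take $E = C\times\{1/2\}$, where $C\subset[0,1]$ is a self-similar Cantor set of dimension $2-p$. The map $f$ will be the analogue of the Koskela--Wildrick map. It is the identity, or a piecewise isometry, off $E$, but it collapses or shears along the ``fingers'' accumulating at $E$. Concretely, $f(x,y)=(x,\,y+u(x,y))$, where $u$ is built from the Cantor function of $C$, interpolated so that $f$ is locally an isometry on each complementary component. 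We then define $Y=f(X)$ with the induced Euclidean metric. This gives $H_f\equiv1$ off $E$. Local Ahlfors $2$-regularity of $Y$ off $f(E)$ follows because $f$ is locally isometric there and the excised regions are uniformly fat (rectangles of comparable size at each scale).

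The first checks are that $X$ is compact, geodesic and Ahlfors $2$-regular. We design the excised pieces so that every ball contains a definite fraction of area, and so that the inner path metric is bi-Lipschitz to the Euclidean one. Since the statement demands geodesic with the induced metric, we instead keep the excised sets open and convex-ish so that the complement is a closed set whose Euclidean metric is a length metric up to constants. If necessary, we take the induced length metric, which the final remark permits up to bi-Lipschitz change.

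The failure of absolute continuity comes next. We consider the family $\Gamma$ of vertical segments $\{x\}\times[0,1]$ with $x\in C$. On each such segment, $f$ maps the single point $(x,1/2)\in E$ to a nondegenerate segment, or jumps by an amount given by the Cantor structure. Equivalently, we use paths through $E$ along which $f$ gains length concentrated on the $\H^1$-null set $E\cap\gamma$. Such paths have positive $q$-modulus for large $q$ exactly when $E$ is not too small. For a family of curves all passing through a set of dimension $2-p$, the $q$-capacity of $E$ is positive iff $q>p$, roughly. So we will show $\mod_q$ of the family of curves meeting $E$ (thickened appropriately, e.g.\ radial segments from a fixed ball to $E$) is positive for $q\ge p+\eps$. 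We do this with a Frostman measure $\mu$ on $E$ with $\mu(B(x,r))\lesssim r^{2-p}$, by the standard dual argument: an admissible $\rho$ gives $1\le \int_\gamma \rho$, and we integrate over a curve family parametrized by $\mu$. We also need the fingers to thin out fast enough that these curves stay inside $X$, which is where $\eps$ enters.

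The main obstacle is the $(p+\eps)$-Poincar\'e inequality for $X$. I would prove it via the Semmes pencil/curve-family criterion: for each pair $x,y$ we construct a probability measure on curves joining them whose density is bounded by $d(x,z)^{1-2}+d(y,z)^{1-2}$, up to a weight. Because the excised regions force curves to funnel through gaps near $E$, the density gets amplified there by a factor governed by the gap sizes. We choose the gap ratios at generation $k$ so that the resulting Riesz-type potential maps $L^{p+\eps}$ boundedly, while at exponent $p$ it fails; the borderline $p$ is consistent with Theorem~\ref{bakoro}. Alternatively, we verify the Keith--Zhong style condition via a chain-of-balls decomposition. I expect the geometric-series bookkeeping tying the gap parameter to $\eps$ to be the delicate step.
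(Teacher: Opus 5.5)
\textbf{There is a genuine gap, and it is in the mechanism for the failure of absolute continuity. This makes the choice $E=C\times\{1/2\}$ unworkable.}

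A homeomorphism cannot send the point $(x,1/2)$ to a segment, nor jump there. Moreover, suppose $f$ is a local isometry off $E$, as you design it. Then for every rectifiable $\gamma$ the continuous function $f\circ\gamma_s$ is $1$-Lipschitz on each complementary interval of the closed set $\gamma_s^{-1}(E)$. If that set is countable, consider the mean value inequality for continuous functions whose Dini derivatives are bounded off a countable set. It shows that $f\circ\gamma_s$ is $1$-Lipschitz on the whole interval, hence absolutely continuous. For your vertical segments $\{x\}\times[0,1]$, the set $\gamma_s^{-1}(E)$ is a single point, so these segments witness nothing.

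Consequently, your Frostman/capacity argument, which gives positive $q$-modulus for $q>p$ to curves that merely \emph{hit} $E$, estimates the wrong family. What you need is a family of positive $q$-modulus, each member of which meets $E$ in an uncountable set on which $f\circ\gamma$ violates Lusin's condition (N).

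A set $E$ lying in one horizontal line is also badly suited to producing such a family. Suppose two pieces of $X\setminus E$ on which $f$ is a rigid motion are glued along a Cantor subset of $\{y=1/2\}$; this is the kind of gluing you need for a Poincar\'e exponent below $2$. Then the two motions agree on that whole line. So no relative displacement, and in particular no Cantor-type shear, can be created across such a gluing.

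\textbf{How the paper avoids this.} It uses two Cantor sets.
\begin{itemize}
\item The space $X=\mathbf X(\lambda,\nu)$ is a union of squares $I\times J$. Here $I$ is a surviving interval of $\textswab{C}(\lambda)$ and $J$ is a gap of $\textswab{D}$, a scaled copy of $\textswab{C}(\lambda^\nu)$. The space is closed up by adding $E=\textswab{C}\times\textswab{D}$.
\item The map $f(x,y)=(x+\phi(y),y)$, with $\phi$ the Cantor function of $\textswab{D}$, is a rigid translation on each row. The shear comes from the accumulation of infinitely many rows, not from a single gluing.
\item The paths $\eta_t\in\Gamma(Q_1,Q_2)$ run vertically through $\{x_t\}\times\textswab{D}$, an uncountable subset of $E$. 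Along them, $f\circ\eta_t$ picks up the Cantor function.
\end{itemize}

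Positive modulus of this family (Corollary~\ref{pos-mod}) and the Poincar\'e inequality (Theorem~\ref{main1}) share the same threshold $\textswab{p}_0$. This threshold is strictly larger than $2-\dim\textswab{C}$, which in turn exceeds $2-\dim E$. It reflects the competition between gluing along $\textswab{C}$, which helps, and crossing $\textswab{D}$ uncountably often, which hurts. Concretely, it is the summability condition $2\bigl(2^{-1/(p-1)}\lambda^{(p-2)/(p-1)}\bigr)^{\nu}<1$ of Lemma~\ref{kp-lim}. The converse is shown by the test functions of Proposition~\ref{necmain}. Finally one takes $\dim\textswab{C}+\dim\textswab{D}=2-p$ and $\nu$ large, so that $\textswab{p}_0<p+\eps$.

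Your Poincar\'e paragraph has no counterpart to this computation. ``Choose the gap ratios so that the potential is bounded on $L^{p+\eps}$'' restates the goal rather than proving it.
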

In order to prove Theorem~\ref{main2} we will construct a two-parameter family $\mathbf X:=\mathbf{X}(\lambda,\nu),\, \lambda \in (0,1/2),\, \nu \in \mathbb N$, of subsets of $\mathbb R^2$ that are compact and Ahlfors $2$-regular. Our second main result (Theorem~\ref{main1}) proves that $\mathbf X$ supports a $p$-Poincar\'e inequality if and only if $p > \textswab{p}_0$, where $\textswab{p}_0$ is explicitly given by formula \eqref{eq:p0-def}, and in particular depends only on $\lambda$ and $\nu$.

Given $ p \in (1,2)$, by choosing the parameters $\lambda$ and $\nu$ appropriately, we will have $ \textswab{p}_0 \in (p,p+\eps)$, which completes the requirements on $X$ in Theorem~\ref{main2}. The homeomorphism $f$ and $Y=f(X)$ will be as in~\cite{Kosk-wild}, and $E \subset X$ will be the Cartesian product of two Cantor sets (Section~\ref{sec:qc}).

The most involved part of proving Theorem~\ref{main2} is confirming the Poincar\'e inequality on $\mathbf X$, i.e., Theorem~\ref{main1}, which occupies most of the paper. Theorem~\ref{main1} and its proof can be of independent interest. We prove the Poincar\'e inequality on $\mathbf X$ by constructing explicit Semmes's pencils of curves.

\begin{remark}
    Analogues of our spaces $\mathbf{X}$ can be constructed in higher dimensions and used to prove higher dimensional versions of Theorem~\ref{main2}.
\end{remark}

\subsection{Comparison to non-self-similar Sierpi\'nski carpets}
Our methods are somewhat similar to those employed in \cite{MacTysWil}, but here we carry the calculations until we obtain well-known pointwise estimates, which are sufficient for the validity of the Poincar\'e inequality. In \cite{MacTysWil}, the authors appeal to a characterization of Poincar\'e inequality via modulus bounds, due to Keith~\cite{keith-mod}, and choose to use their pencils of curves to obtain these modulus bounds rather than the pointwise inequalities.

However, our spaces $\mathbf X$ and the Sierpi\'nski carpets in~\cite{MacTysWil} are constructed very differently. While the carpets arise from \emph{deleting} a countable collection of squares, our spaces $\mathbf X$ are constructed from \emph{adding} (union of) countably many squares. As a result, while one obtains carpets as a Gromov-Hausdorff limit of its finite approximations, a fact that is utilized in the proofs in~\cite{MacTysWil}, it is not obvious how one can approximate our $\mathbf X$ with a sequence of, say, ``nice'' Lipschitz domains in $\mathbb R^2$.

\subsection*{Notation} The inclusion $A \subset B$ allows for the possibility $A=B$. If we wish to exclude this, we write $A \subsetneq B$. The characteristic function of $E$ is denoted by $\chi_E$.

When $\mathcal{Q}$ is a collection of, say, cubes, we write $ \sum_{\mathcal{Q}}$ rather than $\sum_{Q \in \mathcal{Q}}$, and $\cup \mathcal{Q}$ (union) rather than  $\cup_{Q \in \mathcal{Q}} Q$.

We will denote the length of an interval $I \subset \mathbb R$ by $|I|$. By \emph{the canonical bijection} between two compact intervals we will mean the unique increasing affine bijection between them. The unit interval $[0,1]$ is denoted by $I_{01}$.

The Euclidean distance of $p$ and $q$ in $\mathbb R^n, n \ge 2$, is denoted by $\|p-q\|$. We write $\diam E$ for the diameter of a set $E$ in a metric space. We denote by $\dist(F,E)$ the distance of sets. When $F=\{x\}$, we simply write $\dist(x,E)$.

A ball in $(X,d)$ comes with a pre-determined center $x$ and radius $r>0$. Given the ball $B(x,r)$ and constant $\kappa >0$, we write $\kappa B:=B(x,\kappa r)$.

We use the words ``path'' and ``curve'' interchangeably. Paths are always defined on compact intervals. A path $\gamma$ is rectifiable if it has finite length, denoted by $\ell(\gamma)$.

By $\H^\alpha(E)$ we mean the $\alpha$-dimensional Hausdorff measure of $E$, and $\dim E$ stands for the Hausdorff dimension of $E$. The intrinsic metric on our space will be bi-Lipschitz equivalent to the inherited Euclidean metric, so, the Hausdorff dimension is the same with respect to either one.

When $0 < \mu(E) < \infty$, for measurable $u$ we will use the shorthand
$$
\intavg_E u\, d\mu :=\frac{1}{\mu(E)} \int_E u\, d\mu.
$$
We also use $u_E$ to denote this quantity.

By $C$ we will denote a generic constant whose value may change even in a
 single string of estimates. For non-negative quantities $A=A(a,b,\ldots)$ and $B=B(a,b,\ldots)$, we write $A \lesssim B$ to mean that there exists $C>0$, called ``the comparison constant'', that is independent of $a,b,\ldots$ and $A \le C B$ for all choices of these parameters. It should be clear from the context what the parameters $a,b,\ldots$ are. For example, if one is discussing Ahlfors regularity of a measure $\mu$, then by $\mu(B(x,r)) \lesssim r^Q$ it is understood that the comparison constant is independent of $x$ and $r$. We write $A \approx B$ if $A \lesssim B$ and $B \lesssim A$. 
\section{Preliminaries}\label{sec:prelim}
Throughout, $(X,d,\mu)$ will be a metric space, with no isolated points, equipped with a Borel measure $\mu$.
\subsection{Ahlfors regularity}
We say $\mu$ is \emph{doubling} if there exists a constant $C>0$ such that
$$
\mu(2B) \le C \mu(B), \quad \text{for every ball $B \subset X$}.
$$
We further require that for one ball, hence for all balls, $0 < \mu(B) < \infty$. We call $C$ \textit{the doubling constant} of $\mu$.

A stronger condition is Ahlfors regularity. We say that $\mu$ is \emph{Ahlfors $Q$-regular} if for every $x \in X$ and every $0<r<\diam X$ we have
$$
C^{-1}r^Q \le \mu(B(x,r)) \le Cr^Q,
$$
for a constant $C>0$ independent of $x$ and $r$.  We say that $X$ is \emph{locally Ahlfors $Q$-regular} if for every compact $A \subset X$ there exist constants $C>0$ and $ r_0 > 0 $ such that
    $$
    \forall\, x \in A, \; \forall\, r \in (0, r_0) : \; C^{-1}r^Q \le \mu(B(x,r)) \le Cr^Q.
    $$
\begin{definition}[\cite{Kosk-wild}]\label{def:Ahlfors-off}
   Let $ F \subset X $ be non-empty. We say that $X$ is \emph{Ahlfors $Q$-regular off $F$} if there exists a constant $C>0$ such that for every $x \in X \setminus F$ there exists $r_x>0$ such that
    $$
    \forall\, r \in (0, r_x) \, : \quad  C^{-1}r^Q \le \mu(B(x,r)) \le Cr^Q.
    $$
\end{definition}

\subsection{Absolute continuity and upper-gradients}
For a rectifiable path in $X$ we denote by $\gamma_s\colon [0,\ell(\gamma)] \to X$ the arc-length parameterization of $\gamma$. Given a Borel function $g \colon X \to [0,\infty]$ and a rectifiable path $\gamma$, we define 
$$
\int_\gamma g\, ds := \int_0^{\ell(\gamma)} g(\gamma_s(t))\, dt.
$$
Let $(Y,d)$ be another metric space. We say that a function $u\colon X \to Y$ is \emph{absolutely continuous} on a rectifiable curve $\gamma$ in $X$ if the composition $u \circ \gamma_s \colon [0,\ell(\gamma)] \to Y$ is absolutely continuous.

 We say that a Borel function $g\colon X \to [0,\infty]$ is an upper-gradient of a continuous mapping $u \colon X \to Y$ if
\begin{equation}\label{defeq:upper-grad}
    d(u(y),u(x)) \le \int_\gamma g\, ds,
\end{equation}
for every $x,y$ in $X$ and every rectifiable curve $\gamma$ that joins $x$ and $y$. We say that $g$ is an upper-gradient for a continuous mapping $u$ defined on (or restricted to) $U\subset X$ if \eqref{defeq:upper-grad} holds for all rectifiable curves in $U$.

The following is easy to prove from the definitions and the so-called absolute continuity of integral.
\begin{lemma}\label{lem:may29-1}
    Suppose $g$ is an upper-gradient of a continuous mapping $u\colon X \to Y$ and $\int_\gamma g\, ds < \infty$ for a rectifiable curve $\gamma$. Then $u$ is absolutely continuous  on $\gamma$. 
\end{lemma}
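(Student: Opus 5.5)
We show that $u\circ\gamma_s$ is absolutely continuous directly from the definition, by applying the upper-gradient inequality \eqref{defeq:upper-grad} to subcurves of $\gamma$ and then using absolute continuity of the Lebesgue integral of the integrable function $g\circ\gamma_s$ on $[0,\ell(\gamma)]$.

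First, set $L=\ell(\gamma)$ and $h:=g\circ\gamma_s\colon[0,L]\to[0,\infty]$. The function $h$ is Borel as a composition of a Borel function with a continuous map. By hypothesis,
\[
\int_0^L h(t)\,dt=\int_\gamma g\,ds<\infty,
\]
so $h\in L^1([0,L])$.

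Next, for $0\le a<b\le L$, the restriction $\gamma_s|_{[a,b]}$ is a rectifiable curve joining $\gamma_s(a)$ and $\gamma_s(b)$. It is already arc-length parametrized, up to the translation $t\mapsto t+a$, and its length is $b-a$. Applying \eqref{defeq:upper-grad} to it gives
\[
d\bigl(u(\gamma_s(b)),u(\gamma_s(a))\bigr)\le \int_{\gamma_s|_{[a,b]}} g\,ds=\int_a^b h(t)\,dt .
\]
The only point needing care here is the identification of the line integral over the subcurve with $\int_a^b h$. This holds because the arc-length parametrization of a subcurve of an arc-length parametrized curve is its restriction, shifted by $a$.

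Finally, let $\eps>0$. By absolute continuity of the integral, there is $\delta>0$ such that $\int_A h<\eps$ for every measurable $A\subset[0,L]$ with $|A|<\delta$. Take finitely many pairwise disjoint intervals $(a_i,b_i)\subset[0,L]$ with $\sum_i (b_i-a_i)<\delta$. Summing the previous estimate over $i$ gives
\[
\sum_i d\bigl(u(\gamma_s(b_i)),u(\gamma_s(a_i))\bigr)\le \int_{\bigcup_i (a_i,b_i)} h<\eps .
\]
This is exactly the definition of absolute continuity of $u\circ\gamma_s\colon[0,L]\to Y$. None of the steps presents a genuine obstacle; the argument is a direct unwinding of the definitions.
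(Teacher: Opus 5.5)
Your proof is correct and follows the route the paper has in mind. The paper gives no written proof; it only remarks that the lemma follows from the definitions and the absolute continuity of the integral, which is exactly your argument: apply the upper-gradient inequality to the subcurves $\gamma_s|_{[a_i,b_i]}$, then use absolute continuity of the integral of $h=g\circ\gamma_s\in L^1([0,\ell(\gamma)])$.
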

Let $\Gamma$ be a collection of paths in $X$. For $1 \leq p < \infty$, \textit{the $p$-modulus} of $\Gamma$ is defined as
$$
\mod_p (\Gamma) := \inf_g \int_X g^p\, d\mu ,
$$
where the infimum is taken over all Borel functions $g\colon X \to [0,\infty]$ such that $\int_\gamma g\, ds \ge 1$ for all rectifiable curves $\gamma \in \Gamma$.

We say that a certain property holds for \emph{$p$-almost every curve} in $X$, written \emph{$p$-a.e.\ curve}, if the collection of curves for which it fails has zero $p$-modulus. By H\"older's inequality, if $\mu(X)<\infty$ then a property that holds for $p$-a.e.\ curve also holds for $q$-a.e.\ curve if $1 \leq q<p$; but, of course, the converse does not necessarily hold.
\begin{lemma}[{\cite[Lemma 5.2.8]{HKST:15}}]
\label{lem:fuglede}
    For every Borel function $ g \in L^p(X) $,  $\int_\gamma g\, ds < \infty$ for $p$-a.e.\ curve $\gamma$ in $X$.
\end{lemma}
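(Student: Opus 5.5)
The statement to prove is Lemma~\ref{lem:fuglede}: for a Borel $g \in L^p(X)$, the family
\[
\Gamma_\infty := \{\gamma \text{ rectifiable} : \textstyle\int_\gamma g\, ds = \infty\}
\]
has $p$-modulus zero. The approach is to test the definition of $\mod_p$ directly against scaled copies of $g$ itself; no structure of $X$ beyond the measure is needed.

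Fix $n \in \mathbb N$ and set $\rho_n := g/n$. This is a nonnegative Borel function. For every $\gamma \in \Gamma_\infty$ we have
\[
\int_\gamma \rho_n\, ds = \frac1n \int_\gamma g\, ds = \infty \ge 1,
\]
so $\rho_n$ is admissible for $\Gamma_\infty$. The definition of modulus then gives
\[
\mod_p(\Gamma_\infty) \le \int_X \rho_n^p\, d\mu = n^{-p}\, \|g\|_{L^p(X)}^p .
\]
Because $g \in L^p(X)$, the right-hand side is finite, and it tends to $0$ as $n \to \infty$. Hence $\mod_p(\Gamma_\infty)=0$.

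Non-rectifiable curves impose no condition on admissible functions, so they play no role in this argument. If one wants the statement to include ``$\gamma$ rectifiable'' in the conclusion, the only extra input is the convention that the exceptional family is taken within the rectifiable curves. The one point worth checking is measurability: $\rho_n$ must be Borel for the admissibility test to apply, and it is, since $g$ is Borel. There is no real obstacle beyond this.
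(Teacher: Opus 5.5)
Your proof is correct and is the standard argument. The paper does not prove this lemma; it cites Lemma 5.2.8 of Heinonen--Koskela--Shanmugalingam--Tyson, whose proof is the same as yours: $g/n$ is admissible for the family of curves with $\int_\gamma g\,ds=\infty$, so that family has $p$-modulus at most $n^{-p}\|g\|_{L^p(X)}^p\to 0$.
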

From Lemmas~\ref{lem:may29-1} and~\ref{lem:fuglede} we obtain:
\begin{corollary}\label{cor:fuglede}
    If $g$ is an upper-gradient of a continuous mapping $u\colon X \to Y$ and $ g \in L^p(X) $, then $u$ is absolutely continuous on $p$-almost every curve.
\end{corollary}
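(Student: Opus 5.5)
The plan is to combine Lemma~\ref{lem:fuglede} with Lemma~\ref{lem:may29-1}, applied curve by curve. Let $g \in L^p(X)$ be the given Borel upper-gradient of $u$, and let $\Gamma_\infty$ denote the family of rectifiable curves $\gamma$ in $X$ with $\int_\gamma g\, ds = \infty$.

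First, by Lemma~\ref{lem:fuglede}, since $g$ is Borel and lies in $L^p(X)$, we have $\mod_p(\Gamma_\infty) = 0$; that is, $\int_\gamma g\,ds<\infty$ for $p$-a.e.\ curve. If one prefers to see this directly, take the admissible functions $g/n$ for $\Gamma_\infty$: each has line integral $\infty \ge 1$ on every curve of $\Gamma_\infty$. This gives $\mod_p(\Gamma_\infty) \le n^{-p}\int_X g^p\,d\mu \to 0$.

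Second, take any rectifiable $\gamma \notin \Gamma_\infty$. Then $\int_\gamma g\,ds<\infty$, and $g$ is an upper-gradient of the continuous map $u$, so Lemma~\ref{lem:may29-1} yields that $u\circ\gamma_s$ is absolutely continuous. The underlying reason is simple. For subintervals $[a_i,b_i]$ of $[0,\ell(\gamma)]$, applying the upper-gradient inequality to the subcurves $\gamma_s|_{[a_i,b_i]}$ gives
\[
\sum_i d\bigl(u(\gamma_s(b_i)),u(\gamma_s(a_i))\bigr) \le \int_{\bigcup_i [a_i,b_i]} g(\gamma_s(t))\,dt .
\]
The right-hand side is small when $\sum_i (b_i-a_i)$ is small, by absolute continuity of the integral of the integrable function $g\circ\gamma_s$.

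Hence the set of curves on which $u$ fails to be absolutely continuous is contained in $\Gamma_\infty$. Monotonicity of modulus then gives that this family has zero $p$-modulus. There is no real obstacle here; the only points to check are Borel measurability of $g$, so that the line integrals make sense, and the convention that the curves in question are rectifiable, since absolute continuity is defined via the arc-length parametrization. Both are built into the hypotheses and definitions above.
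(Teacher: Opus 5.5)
Your proposal is correct and follows exactly the paper's route: the paper obtains the corollary directly from Lemma~\ref{lem:fuglede} ($\int_\gamma g\,ds<\infty$ for $p$-a.e.\ curve) together with Lemma~\ref{lem:may29-1} (a finite line integral of an upper-gradient forces absolute continuity on that curve). Your additional direct checks of both lemmas are also sound: the admissible functions $g/n$ give $\operatorname{Mod}_p(\Gamma_\infty)=0$, and the absolute continuity of the integral of $g\circ\gamma_s$ gives absolute continuity of $u\circ\gamma_s$.
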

When $X$ is locally compact and $Y$ is equipped with a locally finite Borel measure, as will be the case in our main results, we say that a continuous $u\colon X \to Y$ is in $N^{1,p}_{loc}(X;Y)$ if it has an upper-gradient that belongs to $L^p_{loc}(X)$. We will not need further properties of the space $N^{1,p}_{loc}(X;Y)$, known as the Newtonian-Sobolev spaces in literature (\cite{Shanmu:00,HKST:15}).

By Corollary~\ref{cor:fuglede}, if $u \in N^{1,p}_{loc}(X;Y)$, then $u$ is absolutely continuous on $p$-a.e.~curve in $X$. Thus, if $u$ is not absolutely continuous on $p$-a.e.~curve in $X$ then $u \notin N^{1,p}_{loc}(X;Y)$.

\subsection{Poincar\'e inequality}
The question of absolute continuity on a.e.~curve in $X$ will not make much sense if the space has too few nontrivial rectifiable curves, or none at all, for example the Koch snowflake.

Poincar\'e inequality is a condition that guarantees that a space will have a significant supply of rectifiable curves between any two regions of it. There are many variant definitions of the Poincar\'e inequality in literature that mostly end up being equivalent. We follow the one in our main references \cite{bal-kosk-rog} and \cite{Kosk-wild}, which is quite standard.

We say that a metric measure space $(X,d,\mu)$ satisfies a $p$-Poincar\'e inequality, $1\le p <\infty$, if there exist constants $\kappa \ge 1$ and $C > 0$ such that for every ball $B(x,r)$, every bounded continuous function $u\colon \kappa B \to \mathbb R$ and every upper-gradient $g$ of $u$ on $\kappa B$, we have
\begin{equation}\label{defeq:p-PIi}
\intavg_B |u-u_B|\, d\mu \le C (\diam B) \Bigl( \intavg_{\kappa B} g^p \, d\mu \Bigr)^{1/p}.
\end{equation}
As mentioned already, Poincar\'e inequality is more a connectivity condition than an analytic one. This has manifested itself in multiple ways throughout the rich literature that is now available.

Indeed, the strongest connection is shown in \cite{keith-mod} where it is proved that the $p$-Poincar\'e inequality is equivalent to a quantitative lower bound on the $p$-modulus of path families connecting pairs of points in the space; up to some technical modifications.

\subsection{Pointwise inequalities}
One way to prove a Poincar\'e inequality is via the so-called pointwise inequalities. The following is proved by integrating the inequality \eqref{eq:pointwise-h} in both $y$ and $x$ variables.
\begin{lemma}
    Suppose $B \subset X$ is a ball, $u\colon B \to \mathbb R$ is continuous and for a measurable function $h \ge 0$ we have
    \begin{equation}\label{eq:pointwise-h}
        |u(x)-u(y)| \le d(x,y)(h(x)+h(y)), \quad \text{for a.e. $x,y\in B$}.
    \end{equation}
    Then
    \begin{equation*}\label{eq:PI-h}
    \intavg_B|u-u_B|\, d\mu \le 2(\diam B) \intavg_B h\, d\mu.
    \end{equation*}
\end{lemma}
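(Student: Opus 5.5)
The plan is to integrate the pointwise inequality \eqref{eq:pointwise-h} twice, once in $y$ and once in $x$, and then absorb the averages. First, by Jensen's inequality (or simply by the triangle inequality inside the integral) we have
\[
\intavg_B |u-u_B|\, d\mu \le \intavg_B\intavg_B |u(x)-u(y)|\, d\mu(y)\, d\mu(x),
\]
since $|u(x)-u_B| = \bigl|\intavg_B (u(x)-u(y))\,d\mu(y)\bigr| \le \intavg_B |u(x)-u(y)|\,d\mu(y)$. This step needs $u$ to be integrable on $B$. If $u\notin L^1(B)$, the right-hand side of the conclusion should still be handled, either by arguing that \eqref{eq:pointwise-h} with $h\in L^1(B)$ forces $u\in L^1(B)$, or by noting that the claim is trivial when $\intavg_B h = \infty$. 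Indeed, if $h\in L^1(B)$, we fix a point $y_0$ with $h(y_0)<\infty$ at which \eqref{eq:pointwise-h} holds for a.e.\ $x$. This gives $|u(x)|\le |u(y_0)| + \diam B\,(h(x)+h(y_0))$, which is integrable.

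Next, for a.e.\ pair $(x,y)\in B\times B$ we bound $|u(x)-u(y)|\le d(x,y)(h(x)+h(y)) \le (\diam B)(h(x)+h(y))$. The exceptional set of pairs is $\mu\times\mu$-null, so it does not affect the double integral; measurability of the integrand follows from the continuity of $u$ and the measurability of $h$. Applying Tonelli's theorem, we get
\[
\intavg_B\intavg_B (h(x)+h(y))\,d\mu(y)\,d\mu(x) = 2\intavg_B h\,d\mu.
\]
Combining the two displays yields the claimed inequality with constant $2\diam B$.

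No step is a real obstacle. The only care needed is in interpreting ``for a.e.\ $x,y$'' as holding off a product-null set, and in the integrability remark above, which ensures that $u_B$ is well defined.
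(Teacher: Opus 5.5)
Your proposal is correct and follows the paper's route: the paper's one-line justification is exactly to integrate the pointwise inequality in $y$ and then in $x$, using $d(x,y)\le\diam B$. Your remarks on the integrability of $u$ (so that $u_B$ is defined) and on reading ``for a.e.\ $x,y$'' as holding off a $\mu\times\mu$-null set fill in details the paper leaves implicit.
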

The lemma suggests that pointwise inequalities translate to Poincar\'e-type integral inequalities. Thus, the idea that we follow in this paper is to obtain the pointwise inequalities \eqref{eq:pointwise-h} with $h$ taken as the truncated maximal function of $g^p$ where $g$ is an upper-gradient of $u$. The technical task of the current paper is to achieve such pointwise inequalities from carefully constructed pencils of curves. These methods are now quite standard in literature.

For a fixed $R>0$, the truncated maximal function is
$$
\mathcal{M}_R f(x):= \sup_{0<r\le R} \intavg_{B(x,r)} |f|\, d\mu.
$$
As an operator, $\mathcal M_R$ has very important boundedness properties (when the measure is doubling). But we will not need the specific details here and refer to \cite{Haj:Ko:met} for a summary that is most relevant to analysis on metric spaces.

The precise result that we will need is the following.
\begin{lemma}[\cite{Haj:Ko:met}, Theorem~3.3]\label{lem:pointwise-then-PI}
    Fix $p>1$. Let $\mu$ be a doubling measure on $X$ and let $B$ be a ball and $\kappa \ge 1$ be a constant. Suppose that $u$ is an integrable function on $(\kappa+1)B$ and $g \ge 0$ is an $L^p$-integrable function on $(\kappa+1)B$ with the property that there exists a set $N$ with $\mu(N)=0$ and a constant $C>0$ such that for all $x$ and $y$ in $(\kappa+1)B \setminus N$,
    $$
     |u(x)-u(y)| \le Cd(x,y)\Bigl((\mathcal M_{\kappa d(x,y)}g^p(x))^{1/p}+(\mathcal M_{\kappa d(x,y)}g^p(y))^{1/p}\Bigr).
    $$
    Then there exists a constant $C'>0$, depending only on $C$, $p$ and the doubling constant of $\mu$, such that
    $$
    \intavg_B|u-u_B|\, d\mu \le C' (\diam B) \Bigl(\intavg_{(\kappa+1)B} g^p \, d\mu\Bigr)^{1/p}.
    $$
\end{lemma}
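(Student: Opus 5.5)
The goal is the pointwise inequality of the form \eqref{eq:pointwise-h}, with $h=(\mathcal M_{\kappa d(x,y)}g^p)^{1/p}$; the conclusion then follows by integrating it over $B\times B$, as in the lemma preceding this one. The standard route is a chaining argument through Lebesgue points.

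First, reduce to $p$-Poincar\'e-type estimates at every scale via a telescoping sum. Fix Lebesgue points $x,y$ of $u$ in $(\kappa+1)B\setminus N$ and set $r=d(x,y)$. Put $B_i=B(x,2^{-i}r)$ for $i\ge0$. Then
\[
|u(x)-u_{B_0}|\le\sum_{i\ge0}|u_{B_{i+1}}-u_{B_i}|\lesssim\sum_{i\ge0}\intavg_{B_i}|u-u_{B_i}|\,d\mu .
\]
Doubling is used here to compare averages over $B_{i+1}$ and $B_i$. The same chain is built at $y$, and $|u_{B(x,r)}-u_{B(y,r)}|$ is controlled through a ball of radius $2r$ containing both.

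Note, however, that the stated lemma runs in the converse direction. Its hypothesis is the pointwise inequality and its conclusion is the Poincar\'e inequality, so no chaining is needed. The plan is therefore the following.

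Write $h(z)=(\mathcal M_{\kappa d(x,y)} g^p(z))^{1/p}$ and integrate the hypothesis over $x,y\in B$:
\[
\intavg_B|u-u_B|\,d\mu\le\intavg_B\intavg_B|u(x)-u(y)|\,d\mu(y)\,d\mu(x)\le 2C\,\diam B\,\intavg_B \mathcal M_{2\kappa r_B}(g^p)^{1/p}\,d\mu,
\]
where $\mathcal M_R$ is monotone in $R$ and $d(x,y)\le 2r_B$. The difficulty is that this bounds the left side by the average of $(\mathcal M g^p)^{1/p}$, which need not be controlled by $(\intavg g^p)^{1/p}$ unless we use strong-type boundedness for exponent $1$ in the form of a weak-type estimate. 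Moreover, the truncation radius $2\kappa r_B$ lets the balls leave $(\kappa+1)B$. I would therefore use the Haj\l asz--Koskela truncation trick.

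Step 1 handles localization. For $z\in B$ and radius $\rho\le\kappa d(x,y)\le 2\kappa r_B$, the ball $B(z,\rho)$ sits inside $(2\kappa+1)B$. To keep everything inside $(\kappa+1)B$, I will restrict to pairs with $d(x,y)\le r_B$, so that $\rho\le\kappa r_B$ and $B(z,\rho)\subset(\kappa+1)B$. Pairs at larger distance are handled by chaining through an intermediate point $w$ with $d(x,w),d(w,y)\le r_B$. Doubling gives a set of such $w$ of measure comparable to $\mu(B)$, and averaging over $w$ costs only a constant.

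Step 2 replaces the $L^1$ average of $(\mathcal M g^p)^{1/p}$ by the level-set argument. Set $t=(\intavg_{(\kappa+1)B}g^p)^{1/p}$ and $E_k=\{z\in B: h(z)>2^k t\}$. By the weak $(1,1)$ bound for $\mathcal M$ under doubling,
\[
\mu(E_k)\lesssim 2^{-kp}\mu(B),
\]
and this is where $p>1$ becomes decisive. On $B\setminus E_k$ the function $u$ is $C2^{k}t$-Lipschitz, so $u$ can be McShane-extended from $B\setminus E_k$ to $u_k$. Then
\[
\intavg_B|u-u_B|\lesssim\sum_k \frac{\mu(E_{k-1}\setminus E_k)}{\mu(B)}\,2^k t\,\diam B\lesssim \diam B\, t\sum_k 2^{k(1-p)},
\]
and the sum converges because $p>1$. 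More directly, $\int_B h\,d\mu=\int_0^\infty\mu(\{h>s\})\,ds$, with the weak-type bound $\mu(\{h>s\})\lesssim s^{-p}\,t^p\mu(B)$ for $s>t$ and the trivial bound $\mu(B)$ for $s\le t$, already gives $\intavg_B h\lesssim t$ provided $p>1$.

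This last point is the key step. The weak-type estimate $\mu(\{\mathcal M_R g^p>s^p\}\cap B)\le C s^{-p}\int_{(\kappa+1)B}g^p$ follows from the $5r$-covering lemma with localized balls, and I expect the only delicate issue to be keeping all the balls inside $(\kappa+1)B$, which is the purpose of Step 1. The constant then depends only on $C$, $p$ and the doubling constant.
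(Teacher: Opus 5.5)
\textbf{Verdict: the core argument is correct, but Step~1 contains a false claim.} The paper does not prove this lemma; it quotes Haj\l asz--Koskela. The mechanism you end with is the standard one. You integrate the hypothesis over $B\times B$ to get $\intavg_B|u-u_B|\,d\mu\le 2C\diam B\intavg_B h\,d\mu$. Then the weak type $(1,1)$ bound for $\mathcal M$ and the layer-cake formula give $\intavg_B h\lesssim t$, with $p>1$ making $\int_t^\infty s^{-p}\,ds$ finite.

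\textbf{What fails in Step~1.} You claim that for $x,y\in B$ with $d(x,y)>r_B$, the set of $w$ with $d(x,w),d(w,y)\le r_B$ has measure comparable to $\mu(B)$. (You would also need $w\in B$, so that balls of radius $\kappa r_B$ about $w$ stay in $(\kappa+1)B$.) This is false already in $\mathbb R$. For $B=(-1,1)$, $x=-1+\epsilon$, $y=1-\epsilon$, the set is $[-\epsilon,\epsilon]$, so averaging over $w$ costs the unbounded factor $\mu(B)/(2\epsilon)$.

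\textbf{No chaining can fix it.} If $g$ is defined on all of $X$ and the maximal function in the hypothesis sees $g$ off $(\kappa+1)B$, the statement is false. Here is a counterexample with $\kappa=1$ and $B=B(o,r)$.
\begin{itemize}
\item Let $X$ be a Euclidean disc $V$ of radius $r$ about $o$, together with four Euclidean discs $A,A',Z,Z'$ of radius $\delta/10$, each of mass $\pi r^2$.
\item Use the tree metric in which $A,A'$ hang off $o$ by edges of length $r-\delta$, and $Z,Z'$ hang off the centres of $A,A'$ by edges of length $r+\frac54\delta$; the edges are then deleted.
\item Then $A,A'\subset B$ and $Z,Z'$ lie outside $2B$. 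The measure is doubling with a constant independent of $\delta$.
\item Take $u=1,0,\frac12$ on $A,A',V$. Take $g=1/(r-\delta)$ on $V\cap B(o,3\delta)$, $g=0$ on the rest of $2B$, and $g$ huge on $Z\cup Z'$.
\end{itemize}
The hypothesis holds with $C=1$. Pairs involving $V\cap B(o,3\delta)$ are paid for by $g$ there. Every other pair with $u(x)\ne u(y)$ has $d(x,y)\ge r+2\delta$, so the ball of radius $d(x,y)$ about its point in $A$ or $A'$ swallows $Z$ or $Z'$. Yet $\intavg_B|u-u_B|\,d\mu=\frac13$, while $\diam B\,(\intavg_{2B}g^p\,d\mu)^{1/p}\lesssim(\delta/r)^{2/p}\to0$.

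\textbf{The repair.} Read the hypothesis as the statement sets it up: $g$ is given only on $(\kappa+1)B$, so the maximal function is that of $g^p\chi_{(\kappa+1)B}$. Then drop Step~1 entirely.
\begin{itemize}
\item For all $x,y\in B\setminus N$, $h\le(\mathcal M(g^p\chi_{(\kappa+1)B}))^{1/p}$, where $\mathcal M$ is the untruncated maximal function.
\item The $5r$-covering lemma gives $\mu(\{z\in B:h(z)>s\})\le C_d\, s^{-p}\int_{(\kappa+1)B}g^p\,d\mu$ for every $s>0$, with no condition on where the balls lie.
\item Your layer-cake computation then finishes the proof.
\end{itemize}
If $g$ is global, the same argument gives the conclusion with $(2\kappa+1)B$, since balls centred in $B$ with radius at most $2\kappa r_B$ lie there. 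That is all the paper needs.

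\textbf{Two smaller points.}
\begin{itemize}
\item The McShane paragraph is unnecessary, and its displayed sum is not justified as written.
\item $C'$ must also depend on $\kappa$, through $\mu((\kappa+1)B)\le C(\kappa)\mu(B)$. On $\mathbb R$, take $u(z)=\max\{-r,\min\{r,z\}\}$, $g=\chi_{[-r,r]}$ and $B=(-r,r)$. The hypothesis holds with $C=1$ for every $\kappa$, but the conclusion forces $C'\ge\frac14(\kappa+1)^{1/p}$. So your final sentence, like the statement itself, is inaccurate on this point.
\end{itemize}
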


\subsection{Coarea integration}\label{sec:coare-trapez}
In this subsection, $E$ is the trapezoidal domain in the plane between two parallel horizontal line segments $A$ and $B$ of lengths $\ell \le L$ such that their projections onto the $x$-axis intersect. Without loss of generality, let the larger segment be $A=[0,L]$ (Figure~\ref{fub:coarea}). For each $t \in [0,L]$, let $\gamma_t$ be the path that joins the point $(0,t)$ along a line to the point on the other segment that corresponds to it under the canonical bijection between the two segments.

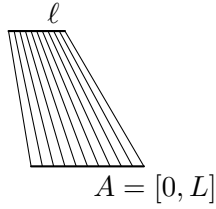
\begin{figure}[h]
    \centering
    \begin{tikzpicture}[line cap=round,line join=round,scale=0.3]
  \foreach \x in {0,0.5,1,1.5,2,2.5,3,3.5,4,4.5,5}
  {
    \draw (0.5*\x-1,6) -- (\x,0);
}
\draw[-, thick] (0,0) -- (5,0);
\draw[-, thick] (-1,6) -- (1.5,6);

\node at (5.5,-1) {$A=[0,L]$};
\node at (1,6.8) {$\ell$};
\end{tikzpicture}
    \caption{Coarea integration.}
    \label{fub:coarea}
\end{figure}

\begin{lemma}\label{lem:coarea-intgr}
For every measurable $g\colon E \to [0,\infty]$,
\begin{equation}
\label{fubp02}
\int_{[0,L]}\int_{\gamma_{t}} g \, dsdt \lesssim \frac{L}{\ell} \int_{E} g \, d\mathcal{L}^2,
\end{equation}
with a comparison constant that does not depend on $L$ and $\ell$.
\end{lemma}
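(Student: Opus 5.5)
The plan is to parametrize the trapezoid $E$ by the family of segments $\gamma_t$ and change variables. Place the longer segment $A=[0,L]\times\{0\}$ and the shorter one $B=[b,b+\ell]\times\{h\}$, where $h>0$ is the height of $E$. The canonical bijection sends $(t,0)$ to $(b+t\ell/L,\,h)$. Define
\[
\Phi(t,s)=\Bigl((1-s)t+s\bigl(b+t\ell/L\bigr),\; sh\Bigr),\qquad (t,s)\in[0,L]\times[0,1].
\]
For fixed $t$, the map $s\mapsto\Phi(t,s)$ is an affine (constant speed) parametrization of $\gamma_t$. Its speed is
\[
\lambda(t)=\ell(\gamma_t)=\sqrt{h^2+(b+t\ell/L-t)^2}.
\]
Hence $\int_{\gamma_t}g\,ds=\lambda(t)\int_0^1 g(\Phi(t,s))\,ds$.

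Next I compute the Jacobian of $\Phi$. Since the second coordinate depends only on $s$, the Jacobian matrix is triangular, and
\[
|\det D\Phi(t,s)|=h\Bigl((1-s)+s\frac{\ell}{L}\Bigr)\ \ge\ h\,\frac{\ell}{L},
\]
using $\ell\le L$. In particular $\Phi$ is injective: on each horizontal line $y=sh$ it is a strictly increasing affine map of $t$. So $\Phi$ is a diffeomorphism of the open rectangle onto the interior of $E$. The change of variables formula then gives
\[
\int_0^L\int_{\gamma_t}g\,ds\,dt=\int_0^L\int_0^1 g(\Phi)\,\lambda(t)\,ds\,dt
=\int_E g\,\frac{\lambda(t(z))}{|\det D\Phi|}\,d\mathcal L^2(z)
\le \frac{L}{\ell}\,\sup_t\frac{\lambda(t)}{h}\int_E g\,d\mathcal L^2 .
\]

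The only point needing care, and the step I expect to be the main obstacle, is bounding the factor $\lambda(t)/h=\sqrt{1+\bigl((b+t\ell/L-t)/h\bigr)^2}$ by a constant. Because the projections of $A$ and $B$ onto the $x$-axis intersect and $\ell\le L$, the horizontal displacement satisfies $|b+t\ell/L-t|\le 2L$. Thus $\lambda(t)/h\le\sqrt{1+4(L/h)^2}$.

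This is bounded by an absolute constant as soon as the height of $E$ is comparable to (or larger than) $L$. I would make this explicit as the standing geometric assumption on the trapezoids to which the lemma is applied, namely that the segments $\gamma_t$ have uniformly bounded slope, which is exactly the situation in Figure~\ref{fub:coarea}. Without such a bound the estimate cannot hold with a constant independent of the shape. For example, a very flat trapezoid with a large horizontal shift makes each $\gamma_t$ long while $E$ has small area.

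With that normalization, the comparison constant depends only on this slope bound and not on $L$ or $\ell$, which yields \eqref{fubp02}.
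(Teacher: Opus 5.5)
Your argument is correct and is essentially the paper's proof. The paper applies the coarea formula to the Lipschitz map $\pi\colon E\to[0,L]$ that sends a point to the index $t$ of the segment $\gamma_t$ through it, and asserts $\|D\pi\|\lesssim L/\ell$ ``by easy calculations''; your density $\ell(\gamma_t)/|\det D\Phi|$ is exactly $|\nabla\pi|$, so your change of variables through the inverse parametrization $\Phi$ carries out that calculation explicitly.

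Your caveat is also justified. On the long edge $A$ this density equals $\ell(\gamma_t)/h$, so the bound genuinely requires the segments $\gamma_t$ to make a controlled angle with the vertical (e.g.\ $h\gtrsim L$). The paper leaves this implicit in Figure~\ref{fub:coarea}, and the condition holds in its applications, such as the pieces $E_{ij}$ and the setting of Corollary~\ref{cor:coare-max-2}, where $\dist(B,A)\approx L$.
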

\begin{proof}
    Let $\pi \colon E \to [0,L]$ be the function that satisfies $\pi(x,y)=t$ iff $\gamma_t$ is the path that contains $(x,y)$. Then $\pi$ is Lipschitz and by easy calculations $\|D\pi\| \lesssim L/\ell$, where $\|D\pi\|$ is the supremum norm of the derivative. Thus, by the coarea formula~(\cite{Evans-Gariepy})
    $$
    \int_{[0,L]}\int_{\gamma_{t}} g \, dsdt = \int_{E} g \|D\pi\| \, d\mathcal{L}^2 \lesssim \frac{L}{\ell} \int_{E} g \, d\mathcal{L}^2. 
    $$
\end{proof}

\begin{corollary}\label{cor:coare-max}
    In the above setting, further assume that for some constant $C\ge 1$ and a point $x \in \mathbb R^2$ we have $\frac{L}{\ell} \le C$, and $B(x,CL) \supset E$. Then with constants that depend only on $C$, we have, for every measurable $g\colon B(x,CL) \to [0,\infty]$,
\begin{equation*}
    \intavg_{[0,L]}\int_{\gamma_{t}} g \, dsdt \lesssim L \mathcal M_{CL}g(x).
\end{equation*}
\end{corollary}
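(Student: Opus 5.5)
The plan is to derive Corollary~\ref{cor:coare-max} directly from Lemma~\ref{lem:coarea-intgr}, bounding the resulting area integral by a ball average. First I divide \eqref{fubp02} by $L$, which turns the left-hand side into the average over $[0,L]$:
\[
\intavg_{[0,L]}\int_{\gamma_t} g\, ds\,dt = \frac{1}{L}\int_{[0,L]}\int_{\gamma_t} g\, ds\,dt \lesssim \frac{1}{\ell}\int_E g\, d\mathcal L^2 .
\]
The comparison constant here is absolute, since Lemma~\ref{lem:coarea-intgr} gives one independent of $L$ and $\ell$. I apply the lemma to the restriction of $g$ to $E$; this is legitimate because $E\subset B(x,CL)$, where $g$ is defined.

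Next I use the hypothesis $L/\ell\le C$, i.e.\ $1/\ell \le C/L$. I enlarge the domain of integration from $E$ to the ball, using $g\ge 0$ and $E\subset B(x,CL)$. Then I rewrite the integral as an average, using $\mathcal L^2(B(x,CL))=\pi C^2L^2$:
\[
\frac{1}{\ell}\int_E g \le \frac{C}{L}\int_{B(x,CL)} g = \frac{C}{L}\,\pi C^2 L^2 \intavg_{B(x,CL)} g = \pi C^3 L \intavg_{B(x,CL)} g .
\]

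Finally, the radius $r=CL$ is admissible in the definition of the truncated maximal function $\mathcal M_{CL}$. Hence $\intavg_{B(x,CL)} g \le \mathcal M_{CL} g(x)$, and combining the displays gives
\[
\intavg_{[0,L]}\int_{\gamma_t} g\,ds\,dt \lesssim L\,\mathcal M_{CL}g(x),
\]
with a constant depending only on $C$.

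The argument is elementary; the only points needing care are bookkeeping ones. The first is that the ball is taken in $\mathbb R^2$ with Lebesgue measure, the ambient setting of Lemma~\ref{lem:coarea-intgr}, so that the area of the ball is exactly comparable to $L^2$. If instead the maximal function is later taken in the space $\mathbf X$ with its Ahlfors $2$-regular measure, the same bound holds with $\mu(B(x,CL))\approx L^2$, provided the trapezoid lies in the space. The second is that the constant must not depend on $x$, $L$ or $\ell$, which holds because each step involves only $C$ and absolute constants.
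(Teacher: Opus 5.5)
Your proof is correct and follows the paper's argument: divide \eqref{fubp02} by $L$, use $L/\ell\le C$, enlarge $E$ to $B(x,CL)$, and bound $L^{-2}\int_{B(x,CL)}g\,d\mathcal L^2$ by a constant multiple of $\mathcal M_{CL}g(x)$. Like the paper, you use Lemma~\ref{lem:coarea-intgr} as a black box, and that lemma's constant tacitly requires the paths $\gamma_t$ to make a bounded angle with the vertical (for instance, the height of $E$ comparable to $L$); this holds in every application in the paper, but without it the constant cannot depend on $C$ alone.
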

\begin{proof}
    From \eqref{fubp02} and the assumptions, we have
    \begin{align*}
    \intavg_{[0,L]}\int_{\gamma_{t}} g \, dsdt &\lesssim \frac{L^2}{\ell} \frac{1}{L^2} \int_{E} g \, d\mathcal{L}^2 \\
    & \lesssim L\frac{L}{\ell} \frac{1}{L^2} \int_{B(x,CL)}g\, d\mathcal{L}^2\\
    & \lesssim L \mathcal M_{CL}g(x).
    \end{align*}
\end{proof}
An iteration of Corollary~\ref{cor:coare-max} yields an estimate even when $\frac{L}{\ell}$ is arbitrarily large. When $x$ is ``the vertex'' of the trapezoid, we can even let $\ell \to 0$ and obtain a quantitative approximation of integration in polar coordinates.
\begin{corollary}\label{cor:coare-max-2}
    Suppose that the trapezoid $E$ satisfies $\dist(B,A) \approx |A|=L$. Let $x \in \mathbb R^2$ be an arbitrary point such that $\dist(x,B) \approx \ell$ (Figure~\ref{fubfub:coarea}). Then with constants that depend only on the previous comparison constants, we have, for every measurable $g\colon B(x,CL) \to [0,\infty]$,
\begin{equation*}
    \intavg_{[0,L]}\int_{\gamma_{t}} g \, dsdt \lesssim L \mathcal M_{CL}g(x).
\end{equation*}
\end{corollary}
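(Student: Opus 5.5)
We decompose the trapezoid $E$ into a chain of sub-trapezoids of geometrically shrinking size and apply Corollary~\ref{cor:coare-max} to each piece. Choose horizontal lines parallel to $A$ and $B$ cutting $E$ into trapezoids $E_0,E_1,\dots,E_N$. Here $E_0$ has the bottom side $A$, and each $E_k$ has parallel sides of lengths $L_k \ge L_{k+1}$ with $L_{k}/L_{k+1}\le 2$, say. The sides decrease geometrically from $L$ down to $\ell$, so $N\approx \log_2(L/\ell)$.

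\textbf{Step 1: geometry of the pieces.} Because $\dist(A,B)\approx L$, the two slanted sides of $E$ converge toward a ``vertex'' at distance $\approx \ell$ beyond $B$. Hence the width of $E$ at height $h$ is affine in $h$, and a cross-section of width $L_k$ lies at distance $\approx L_k$ from the vertex region. The hypothesis $\dist(x,B)\approx\ell$ then gives $E_k\subset B(x,CL_k)$, and also $\operatorname{height}(E_k)\approx L_k$. So each $E_k$ satisfies the hypotheses of Corollary~\ref{cor:coare-max}, with $L_k/L_{k+1}\le 2$ and a uniform constant $C$. The last piece $E_N$, with top side $B$ of length $\ell$, is handled the same way.

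\textbf{Step 2: splitting the integral.} Each path $\gamma_t$ is a straight segment from $A$ to $B$. It crosses $E_k$ exactly in the segment $\gamma_t\cap E_k$, and these segments are precisely the paths of the trapezoid $E_k$ under the canonical bijection. To see this, note that the maps from $A$ to the cross-sections are compositions of canonical affine bijections, since the family of segments is the linear interpolation between $A$ and $B$. The parameter $t\in[0,L]$ corresponds affinely to a parameter $t_k\in[0,L_k]$ with $dt = (L/L_k)\,dt_k$. Therefore
\begin{equation*}
\intavg_{[0,L]}\int_{\gamma_t} g\,ds=\sum_{k=0}^N \intavg_{[0,L_k]}\int_{\gamma^k_{t_k}} g\,ds\,dt_k \lesssim \sum_{k=0}^N L_k\,\mathcal M_{CL_k}g(x),
\end{equation*}
where the inequality is Corollary~\ref{cor:coare-max} applied to each piece.

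\textbf{Step 3: summation.} Since $L_k\le CL$, we have $\mathcal M_{CL_k}g(x)\le \mathcal M_{C^2L}g(x)$; we absorb constants by renaming $C$. The geometric sum $\sum_k L_k\lesssim L$ then yields the claim.

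The main obstacle is Step 1: verifying uniformly in $k$ that $E_k\subset B(x,CL_k)$ with $C$ independent of $k$ and of $L/\ell$. This requires the decay $L_k\approx 2^{-k}L$ and the fact that the distance from $x$ to $E_k$ is $\lesssim L_k$. Both follow from the affine width profile of $E$ together with $\dist(A,B)\approx L$ and $\dist(x,B)\approx \ell$. When $\ell\approx L$, no iteration is needed and the result is Corollary~\ref{cor:coare-max} directly.
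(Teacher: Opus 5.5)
Your proposal is correct and is essentially the argument the paper intends: the paper only says that an iteration of Corollary~\ref{cor:coare-max} over the dyadically shrinking sub-trapezoids of Figure~\ref{fubfub:coarea} gives the claim, and you carry this out (the average-preserving affine reparametrization on each piece, the containment $E_k\subset B(x,CL_k)$, monotonicity of $\mathcal M_R$ in $R$, and a geometric sum). Two small fixes: take $L_k=2^{-k}L$ exactly, since the bound $L_k/L_{k+1}\le 2$ alone does not give the decay you sum; and let the last piece run from width $L_N\in[2\ell,4\ell)$ down to $\ell$, so that every piece, including the last, has height comparable to its width.
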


\begin{figure}[h]
    \centering
    \begin{tikzpicture}[line cap=round,line join=round,scale=0.3]
  \foreach \x in {0,0.5,1,1.5,2,2.5,3,3.5,4,4.5,5}
  {
  \draw (0.2*\x-2,15) -- (\x,0);
  }
\draw[-, thick] (0,0) -- (5,0);
\draw[-] (-1.2,9) -- (1.4,9);
\draw[-] (-1.6,12) -- (.2,12);
\draw[-] (-1.8,13.5) -- (-.4,13.5);
\draw[-] (-2,15) -- (0.2*5-2,15);

\node at (-2,16) {$|B|=\ell$};

\node at (4.5,-1) {$|A|=L$};
\node at (-2.4,6) {$\frac{L}{2}\approx$};
\node at (2.7,9) {$\approx \frac{L}{2}$};
\node at (1.7,12) {$\approx \frac{L}{4}$};
\node at (-2.7,10) {$\frac{L}{4}\approx$};
\node at (-2.6,13) {$\vdots$};
\end{tikzpicture}
    \caption{Truncated cone and the maximal function.}
    \label{fubfub:coarea}
\end{figure}
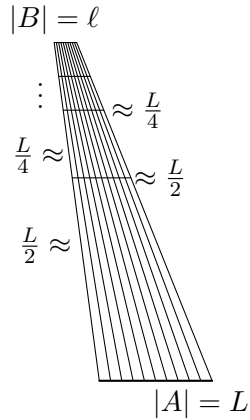

\subsection{Pencil of curves}\label{sec:pencils}
To illustrate and motivate the philosophy behind the proof of the Poincar\'e inequality in this paper, it is worthwhile to recall the basic idea behind using pencils of curves.

\subsubsection{Poinar\'e inequality on $\mathbb R^2$} Every $\mathbb R^n$ supports a $1$-Poinar\'e inequality where $n\in\mathbb N$. To prove this for $\mathbb R^2$, via pointwise inequalities, fix $x$ and $y \neq x$ in the plane. Without loss in generality, suppose $x=(0,\ell)$ and $y=(0,-\ell)$. Let $I=[-\ell,\ell] \times \{0\}$. For each $t \in I$ let $\gamma_t$ be the path that joins $x$ to $(t,0)$ and then to $y$ by straight lines (Figure~\ref{plane}).

\begin{figure}[h]
    \centering
\begin{tikzpicture}[scale=0.6]
\begin{pgfonlayer}{background}
  \fill[lightgray!30] (-3,-4) rectangle (3,4);
\end{pgfonlayer}

\def\n{10}          
\def\h{3}           
\def\l{4}           
\coordinate (top1) at (0,\h);
\coordinate (bottom1) at (0,-\h);

\foreach \i in {0,...,\n} {
  \pgfmathsetmacro\x{-\l/2 + \i*\l/\n}
  \draw[gray] (top1) -- (\x,0);
  \draw[gray] (bottom1) -- (\x,0);
}
\node at (0.5,3) {$x$};

\node at (0.5,-3) {$y$};

\node at (0.5,0.3) {$I$};
\node at (-1.1,0.6) {$\gamma_t$};
\node at (-1.2,-0.3) {$t$};
\draw[thick] (-1.2,0) circle (1pt);

\draw (-2,0)--(2,0);
\end{tikzpicture}
\caption{The pencil of curves from $x$ to $y$ in proving the $1$-Poincar\'e inequality on $\mathbb R^2$.}
\label{plane}
\end{figure}
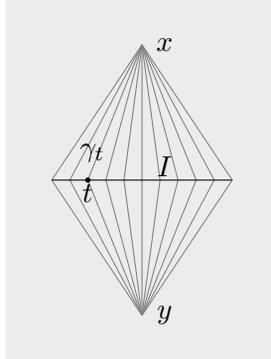

Now if $u$ is continuous on $\mathbb R^2$ and $g$ is an upper-gradient of $u$, then
$$
    |u(x)-u(y)| \leq \int_{\gamma_t} g\, ds, \quad \text{ for every $t \in I$}.
$$
Thus,
\begin{equation}\label{eq:a-monday3}
    |u(x)-u(y)| \leq \intavg_I \int_{\gamma_t} g\, dsdt.
\end{equation}
It is by now a standard fact (e.g.\ see \cite[Exercise~4.9]{Hei:01}) that, with universal constants,
\begin{equation}\label{eq:a-monday4}
\intavg_I \int_{\gamma_t} g \chi_{\{y>0\}}\, dsdt \lesssim |I| \mathcal M_{C|I|} g(x).
\end{equation}
Indeed, this follows from Corollary~\ref{cor:coare-max-2}.

By symmetry, from \eqref{eq:a-monday3} and \eqref{eq:a-monday4} we obtain
\begin{equation*}
    |u(x)-u(y)| \leq \intavg_I \int_{\gamma_t} g\, dsdt \lesssim d(x,y) \Bigl(\mathcal M_{Cd(x,y)} g(x) + \mathcal M_{Cd(x,y)} g(y)\Bigr).
\end{equation*}
By Lemma~\ref{lem:pointwise-then-PI}, this implies a $1$-Poincar\'e inequality on $\mathbb R^2$. Similar argument works in the higher dimensional $\mathbb R^n$.

\subsubsection{Poincar\'e inequality on the bow-tie}\label{subsec:glue-planes}
Now consider the case of two quadrants meeting at a single point. We again use a pencil of curves to illustrate how one can prove that the resulting space supports a $p$-Poincar\'e inequality for (in fact, only for) $p>2$. We only sketch the main idea.

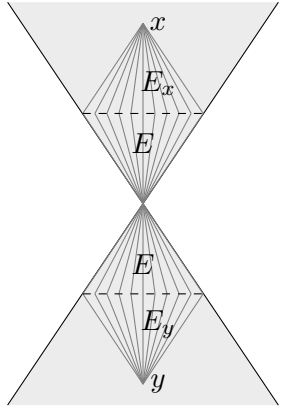
\begin{figure}[h]
    \centering
\begin{tikzpicture}[scale=0.4]
\draw (4.5,3.7)--(0,-3);
\draw (-4.5,3.7)--(0,-3);
\begin{pgfonlayer}{background}
  \fill[lightgray!30]
  (-4.5,3.7) -- (4.5,3.7) -- (0,-3) -- cycle;
\end{pgfonlayer}
\draw (4.5,-9.7)--(0,-3);
\draw (-4.5,-9.7)--(0,-3);
\begin{pgfonlayer}{background}
  \fill[lightgray!30]
    (-4.5,-9.7) -- (4.5,-9.7) -- (0,-3) -- cycle;
\end{pgfonlayer}

\def\n{10}          
\def\h{3}           
\def\l{4}           

\coordinate (top1) at (0,\h);
\coordinate (bottom1) at (0,-\h);

\foreach \i in {0,...,\n} {
  \pgfmathsetmacro\x{-\l/2 + \i*\l/\n}
  \draw[gray] (top1) -- (\x,0);
  \draw[gray] (bottom1) -- (\x,0);
}
\coordinate (top2) at (0,-\h);           
\coordinate (bottom2) at (0,-3*\h);      

\foreach \i in {0,...,\n} {
  \pgfmathsetmacro\x{-\l/2 + \i*\l/\n}
  \draw[gray] (top2) -- (\x,-2*\h);
  \draw[gray] (bottom2) -- (\x,-2*\h);
}
\node at (0.5,3) {$x$};
\node at (0.5,1) {$E_x$};

\node at (0,-1) {$E$};
\node at (0,-5) {$E$};

\node at (0.5,-9) {$y$};
\node at (0.5,-7) {$E_y$};

\draw[dashed] (-2,0)--(2,0);
\draw[dashed] (-2,-6)--(2,-6);
\end{tikzpicture}
\caption{The pencil of curves from $x$ to $y$ is forced to ``pinch'' and lose volume, resulting in a weaker Poincar\'e inequality.}
\label{fig:four-cone}
\end{figure}

So, let $X=\{(a,b): |b| \ge |a|\}$, equipped with the Euclidean metric and measure induced from $\mathbb R^2$. Fix $x=(0,\ell)$ and $y=(0,-\ell)$. Now consider a copy of the double cone of the previous example between $x$ and the origin $(0,0)$, and separately, a double cone between $y$ and the origin (Figure~\ref{fig:four-cone}). Because we can simultaneously index both path families over $I:=[-\ell/2,\ell/2]$, we can concatenate paths to form a family of paths, denoted $\Gamma=\{\gamma_t: t \in I \}$, from $x$ to $y$.

As before, if $u$ is continuous on $X$ and $g$ is an upper-gradient of $u$, then
\begin{equation}\label{eq:a-monday6}
    |u(x)-u(y)| \leq \intavg_{I} \int_{\gamma_t} g\, dsdt.
\end{equation}
Let $E_x$ be the cone at $x$ and $E_y$ the cone at $y$. Then by \eqref{eq:a-monday4},
\begin{equation}\label{eq:eq;1}
    \intavg_I \int_{\gamma_t} g \chi_{E_x}\, ds dt \lesssim |I| \mathcal M_{C|I|} g(x).
\end{equation}
Similarly,
\begin{equation}\label{eq:eq;2}
    \intavg_I \int_{\gamma_t} g \chi_{E_y}\, ds dt \lesssim |I| \mathcal M_{C|I|} g(y).
\end{equation}

Denote the union of the middle two cones that touch the origin by $E$. The best estimate that we can have here is via integration by polar coordinates:
\begin{equation}\label{eq:a-monday50}
    \intavg_{I} \int_{\gamma_t} g \chi_{E}\, ds dt \lesssim \int_{E} g \frac{1}{|y|} \, d\mathcal{L}^2.
\end{equation}

But notice that ``the coarea density'' ${1}/{|y|}$ blows up at the origin, which is away from our points $x$ and $y$. The way to bypass this problem is to apply H\"older's inequality. For $p>1$,
\begin{equation*}
   \int_{E} g \frac{1}{|y|} \, d\mathcal{L}^2 \lesssim \Bigl(\int_{E} |y|^{-p/(p-1)} \, d\mathcal{L}^2\Bigr)^{(p-1)/p} \Bigl(\int_{E} g^p \, d\mathcal{L}^2\Bigr)^{1/p}.
\end{equation*}
We have isolated $g$ from the coarea density. But $ \int_{E} |y|^{-p/(p-1)} \, d\mathcal{L}^2$ is finite iff $p>2$. When $p>2$,
$$
\Bigl(\int_{E} |y|^{-p/(p-1)} \, d\mathcal{L}^2\Bigr)^{(p-1)/p} \approx |I|^{1-\frac{2}{p}}.
$$
So, from \eqref{eq:a-monday50} and $\mathcal{L}^2(E)\approx |I|^2$ we get
\begin{align*}
 \intavg_{I} \int_{\gamma_t} g \chi_{E}\, ds dt & \lesssim |I| \Bigl(\intavg_{E} g^p \, d\mathcal{L}^2\Bigr)^{1/p} \\
 & \lesssim d(x,y) \Bigl((\mathcal M_{Cd(x,y)} g^p(x))^{1/p}+(\mathcal M_{Cd(x,y)} g^p(y))^{1/p}\Bigr).    
\end{align*}
By applying H\"older's inequality to the right-hand sides of \eqref{eq:eq;1} and \eqref{eq:eq;2} we have
\begin{equation*}
    \intavg_I \int_{\gamma_t} g \chi_{E_x}\, ds dt \lesssim |I| \mathcal M_{C|I|} g(x) \lesssim |I| (\mathcal M_{Cd(x,y)} g^p(x))^{1/p},
\end{equation*}
and analogously
\begin{equation*}
    \intavg_I \int_{\gamma_t} g \chi_{E_y}\, ds dt \lesssim |I| \mathcal M_{C|I|} g(y) \lesssim |I| (\mathcal M_{Cd(x,y)} g^p(y))^{1/p}.
\end{equation*}

Continuing from \eqref{eq:a-monday6}, we apply the last three inequalities to obtain the desired pointwise inequality:
\begin{align*}
 |u(x)-u(y)| & = \intavg_I \int_{\gamma_t} g \chi_{E_x}\, ds dt + \intavg_I \int_{\gamma_t} g \chi_{E}\, ds dt + \intavg_I \int_{\gamma_t} g \chi_{E_y}\, ds dt \\
            & \lesssim d(x,y) \Bigl((\mathcal M_{Cd(x,y)} g^p(x))^{1/p}+(\mathcal M_{Cd(x,y)} g^p(y))^{1/p}\Bigr).    
\end{align*}
By Lemma~\ref{lem:pointwise-then-PI}, this implies that $X$ supports a $p$-Poincar\'e inequality for $p>2$.

\subsection{The outline of our proof of the $p$-Poincar\'e inequality}
The key phenomena behind the example of the bow-tie space in the previous section, which will inspire the later constructions and analyses in our work, are as follows: 
\begin{enumerate}
    \item Due to the pinching in the space itself, the traces of our path families must lose measure at certain regions, which introduces unbounded coarea densities when comparing the double integrals to the area integrals;\footnote{The pinching at the endpoints $x$ and $y$ is not a problem due to \eqref{eq:a-monday4}.}
    \item We use H\"older's inequality to separate these coarea densities from the $L^p$-integrals of upper-gradients;
    \item The coarea densities are in $L^{p/(p-1)}$ only for certain range of $p>\textswab{p}_0$;
    \item If calculations work out, we obtain pointwise inequalities involving the truncated maximal function of $g^p$, hence, a $p$-Poincar\'e inequality follows by Lemma~\ref{lem:pointwise-then-PI}.
\end{enumerate}
Our proof of the Poincar\'e inequality for our space $\mathbf X$ will be a more involved execution of the above. In Section~\ref{sec:curve-fam} we construct pencils of curves between arbitrary regions of our space. In Section~\ref{sec:polar-int} we estimate the coarea densities associated with these path families. Using these, in Section~\ref{secpiyes} we prove the pointwise inequalities that are sufficient for the Poincar\'e inequality.

In our space $\mathbf{X}$ there are two competing factors. The first factor is that our space is made up of ``good parts'' that are glued to each other along not just single points but along copies of a Cantor set $\textswab{C}$. So, the pinching is not as extreme as in the bow-tie space. This helps to get a better Poincar\'e inequality, i.e.~ with $p<2$.

Indeed, it is well-known that if we glue two unit squares along a Cantor set $\textswab{C}$, then the space supports a $p$-Poincar\'e inequality for all $p>2-\dim \textswab{C}$. This is proved in  \cite[Section~6.14]{HeiKo-Acta}, but also follows easily from our construction of the path family $\Gamma(I)$ later, for which we calculate the coarea density and prove that it is integrable exactly when $p>2-\dim \textswab{C}$.

However, the second factor works against the Poincar\'e inequality: our paths have to cross the Cantor set $\textswab{C}$ not once or twice, but ``un-countably many times'' determined by a second Cantor set $\textswab{D}$. This intensifies the blowups of the coarea density for the path families, preventing a $p$-Poincar\'e inequality for $p$ too close to $1$.

Therefore, we should expect better Poincar\'e inequalities if 
$\textswab{C}$ has a larger Hausdorff dimension or $\textswab{D}$ has a smaller Hausdorff dimension
(Remark~\ref{thresh}).

The main technical achievement of this paper is that in the case of self-similar Cantor sets, we explicitly calculate the sharp $\textswab{p}_0$ such that our space satisfies a $p$-Poincar\'e inequality for all $p > \textswab{p}_0$, and for no $p \le \textswab{p}_0$.

\section{The Space $({\mathbf{X}},d,\mu)$}\label{construction}
In this section, we construct the main object of interest in this paper: a metric measure space $({\mathbf{X}},d,\mu)$ that corresponds to the choices of two Cantor sets $\textswab{C}$ and $\textswab{D}$ in $\mathbb R^1$.
\subsection{Notation regarding Cantor sets}\label{seccantor}
Given a constant $\lambda \in (0,1/2)$, we denote by $\textswab{C}(\lambda)$ the standard self-similar Cantor set constructed as follows. Step $k=0$ consists of the interval $I_{0,1}:=I_{01}=[0,1]$. For step $k=1$, we remove from $I_{0,1}$ the central open interval of length $1-2\lambda$. This leaves us with the two \emph{surviving} intervals $I_{1,1}$ and $I_{1,2}$, each of length $\lambda$. For step $k=2$ of the construction, we remove from each of the latter intervals their central open interval of length $\lambda(1-2\lambda)$. Continuing inductively, at step $k$ we will be left with $2^{k}$-many closed intervals of lengths $\lambda^{k}$. Let $\{I_{k,j}\}_{j=1}^{2^k}$ be the enumeration of them from left to right. Under this notation,
\begin{equation*}
    \textswab{C}(\lambda)=\bigcap_{k=0}^\infty \bigcup_{j=1}^{2^k} I_{k,j}.
\end{equation*}
The Cantor set $\textswab{C}(\lambda)$ has Hausdorff dimension ${\log 2}/{\log (1/\lambda)}$. We denote by $\mathcal{I}$, the collection of all the \emph{surviving} intervals, i.e.~ all $I_{k,j}$, where $k \in \{0,1,\ldots\}$, $j\in \{1,\ldots,2^{k}\}$. The intervals removed from $[0,1]$ in the process are called the \emph{removed} intervals.
\begin{remark}
    \label{minit1}
    To obtain step $k+1$ from step $k$ in the construction of $\textswab{C}(\lambda)$, $k\in \{0,1,\ldots\}$, we remove $2^{k}$-many open intervals of length $(1-2\lambda)\lambda^{k}$ from $I_{k,j}$'s. Moreover, between any two distinct $I_{k,j}$ and $I_{k,j'}$ there exists a removed interval of size larger than or equal to $(1-2\lambda)\lambda^{k-1}$.    
\end{remark}
Let $\textswab{D} \subset \mathbb R^1$ be a Cantor set, i.e.\ a totally disconnected compact set, which, for now, might not be self-similar. Then, there exists a countable collection of pairwise disjoint open intervals $J \subset [\inf \textswab{D}, \sup \textswab{D} ]$ such that 
\begin{equation}
\label{J-intvl}
    \textswab{D}=[\inf \textswab{D},\sup \textswab{D}]\setminus \bigcup J.
\end{equation}
Indeed, $J$ are the connected components of $[\inf \textswab{D},\sup \textswab{D}] \setminus \textswab{D}$. 

We denote by $\mathcal{J}$, the collection of \emph{the closures} of all $J$ from \eqref{J-intvl}. So, when we write $J \in \mathcal J$, it is understood that $J$ is closed. We assume that $\textswab{D}$ has no isolated points, hence, intervals from $\mathcal{J}$ are also pairwise disjoint.
\begin{remark}
Eventually, we will take $\textswab{D}$ to be a scaled copy of $\textswab{C}(\lambda'),$ for some $\lambda' \in (0, 1/2)$. Then $\mathcal J$ will be exactly the closures of the removed intervals in its construction (see Remark~\ref{minit1}).
\end{remark}

\subsection{$\mathbf{X}$ as a subset of the plane}
\label{asaset}
We fix $\textswab{C}=\textswab{C}(\lambda)$, $\lambda \in (0,1/2)$, and a second Cantor set $\textswab{D}$ as above. Recall the families of closed intervals $\mathcal{I}$ and $\mathcal{J}$ associated to $\textswab{C}$ and $\textswab{D}$, respectively. We wish to emphasize that $\mathcal{I}$ and $\mathcal{J}$ are chosen very differently and have opposite geometric interpretations; see the previous subsection for details. 
We assume that
\begin{align}\label{assum1}
      |J| < \lambda^{-1} \; \text{for all $J \in \mathcal{J}$, and} \; |J| \geq 1 \ \text{for at least one $J \in \mathcal{J}$.}
\end{align}
The latter condition will ensure the connectivity of  our space (Lemma \ref{qgd1}).
\begin{definition}
    \label{cin6}
For every $J \in \mathcal{J}$, fix the unique integer $k \in \{0,1,\ldots\}$ such that $\lambda^{k} \leq |J| < \lambda^{k-1}$. For $j=1,\ldots,2^{k}$ we call each of the closed rectangles $I_{k,j} \times {J}$ a cube. Row $J$ consists of the collection of all cubes $Q=I \times {J}$ for different $I$ (and same $J$). We say that the cube $I \times {J}$ is on row $J$, and
it is a cube over $I$.
\end{definition}
\begin{figure}[h]
    \begin{tikzpicture}[line width=0.5pt, scale=0.09]
  \fill[gray!30] (0,108) rectangle ++(64,10);
  \draw[line width=0.2pt] 
    (0,118) -- (64,118);
  \draw[line width=0.2pt] 
    (0,108) -- (0,118);
    \draw[line width=0.2pt] 
    (64,108) -- (64,118);

    \foreach \k in {126, 150} 
    \foreach \l in {0,48} \draw[line width=.2pt,fill=gray!30] (\l,\k) rectangle ++(16,16);

    \foreach \m in { 120, 144 } 
    \foreach \n in {0, 12, 48, 60} \draw[line width=.2pt,fill=gray!30] (\n,\m) rectangle ++(4,4);

    \foreach \m in {118.5, 124.5, 142.5, 148.5, 166.5} 
    \foreach \n in {0, 3, 12, 15, 48, 51, 60, 63} \draw[line width=.2pt,fill=gray!30] (\n,\m) rectangle ++(1,1);
    \end{tikzpicture}
\caption{Part of the space $\mathbf X$ (shaded).}
\label{fig:part-of-X}
\end{figure}
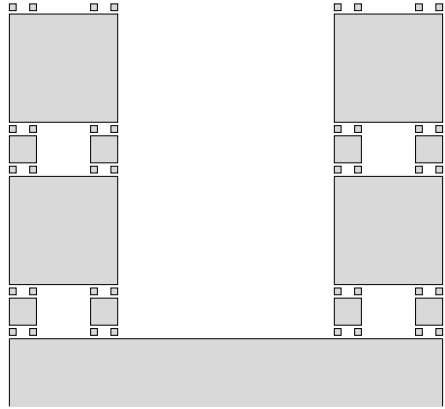

\begin{definition}[The space $\mathbf{X}$]
    \label{defX}
 We write $\mathbf{X}'$ for the union of all the cubes from Definition~\ref{cin6}, for all $J \in \mathcal J$. The space $\mathbf{X}$ is the closure  of $\mathbf{X}'$ in the Euclidean topology of $\mathbb R^2$. 
\end{definition}

\begin{remark}\label{rem-july-1}
Some observations about the structure of $\mathbf{X}$ that we shall later use without further comments:
\begin{enumerate} 
    \item For some $J \in \mathcal J$, the cube $[0,1]\times J$ is in $\mathbf X$.
    \item No two cubes (whether from the same or different rows) intersect.
    \item For any pair of cubes $Q=I\times J$ and $Q'=I'\times J'$, either $I \cap I' = \emptyset$ or $I\subset I'$ or $I' \subset I$.
    \item Suppose $J, J' \in \mathcal{J}$ with $|J| \leq |J'|$. Then for every cube $Q=I\times J$ there is a cube $Q'=I' \times J'$ such that $ I \subset I'$.
    \item Let $t \in \textswab{C} $. For every  $J \in \mathcal{J}$ there is a cube $Q=I\times J$ with $t \in I$.
\end{enumerate}    
\end{remark}
\begin{lemma}
\label{dan7}
    Suppose $p_0=(x_0,y_0) \in \mathbf{X} \setminus \mathbf{X}'$. Then,
    \begin{enumerate}[topsep=0.1em]
        \item[(a)] \label{lem3.5-a} \ $p_0 \in \textswab{C}  \times \textswab{D}$, consequently, $\mathbf{X}=\mathbf{X}' \cup (\textswab{C}  \times \textswab{D})$.
        \item[(b)] \label{lem3.5-b} There exists a sequence of cubes $Q_j$ and $p_j=(x_0,y_j) \in Q_j$ such that
        $\|p_j-p_0\| \to 0$.
        \item[(c)]\label{lem3.5-c} Given any sequence of cubes $Q_j$ and $p_j \in Q_j$, if $\|p_j-p_0\| \to 0$, then
        $\diam  Q_j \to 0 $.
    \end{enumerate}
\end{lemma}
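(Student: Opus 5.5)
The three parts rest on one structural fact, which I would prove first: only finitely many cubes have diameter above any fixed threshold $\delta>0$. Every cube $I_{k,j}\times J$ satisfies $|J|<\lambda^{k-1}$ and $|I_{k,j}|=\lambda^k$, so its diameter is comparable to $\lambda^k\approx |J|$. The intervals of $\mathcal J$ are pairwise disjoint and lie in the bounded interval $[\inf\textswab{D},\sup\textswab{D}]$. Hence only finitely many $J$ have $|J|\ge\delta$, and each row has only $2^k$ cubes. A finite union of cubes is closed. Therefore, if $p_0\notin\mathbf X'$, any sequence of points of $\mathbf X'$ converging to $p_0$ eventually avoids any given finite set of cubes.

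For (c), let $p_j\in Q_j$ with $p_j\to p_0$. If $\diam Q_j\not\to0$, some subsequence has $\diam Q_j\ge\delta$. By the fact above, a further subsequence lies in a single cube $Q$. Since $Q$ is closed, $p_0\in Q\subset\mathbf X'$, which is a contradiction.

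For (a), take $p_0\in\mathbf X\setminus\mathbf X'$. By definition there are points $p_j=(x_j,y_j)\in Q_j=I^{(j)}\times J_j$ converging to $p_0$, and by (c) we have $|I^{(j)}|,|J_j|\to0$. Each $I^{(j)}$ is a surviving interval of $\textswab{C}$, so $\dist(x_j,\textswab{C})\le|I^{(j)}|\to0$. Since $\textswab{C}$ is closed, $x_0\in\textswab{C}$.

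For $y_0$, I would argue by contradiction. Suppose $y_0\notin\textswab{D}$. Because $y_j\in[\inf\textswab D,\sup\textswab D]$, which is closed, $y_0$ lies in that interval and hence in some open component $J^\circ$ of the complement of $\textswab{D}$. Since $J^\circ$ is open, eventually $y_j\in J^\circ$. The intervals of $\mathcal J$ are disjoint, so this forces $J_j=J$, the closure of $J^\circ$, which contradicts $|J_j|\to0$.

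Hence $p_0\in\textswab C\times\textswab D$, and so $\mathbf X\subset\mathbf X'\cup(\textswab C\times\textswab D)$. For the reverse inclusion I would show $\textswab C\times\textswab D\subset\mathbf X$, since this is what makes the equality in (a) an identity.
\begin{itemize}
\item Given $(x,y)\in\textswab C\times\textswab D$, there is a sequence of intervals $J\in\mathcal J$ approaching $y$ with $|J|\to0$. This uses that $\textswab D$ is perfect and that any interval in $\textswab D$'s hull around $y$ must contain some gaps.
\item By Remark~\ref{rem-july-1}(5), each such row contains a cube over an interval $I\ni x$. That cube has small size, so it contains a point close to $(x,y)$.
\end{itemize}

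For (b), pick $J_j$ and cubes $Q_j=I^{(j)}\times J_j$ with $x_0\in I^{(j)}$ as in the last step, and let $p_j=(x_0,y_j)$ with $y_j\in J_j$. Then $\|p_j-p_0\|\le|y_j-y_0|\to0$.

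I expect the main obstacle to be the existence of gaps $J$ near each $y\in\textswab D$ with $|J|\to0$. The route I would take is as follows. For each $\varepsilon$, the interval $(y-\varepsilon,y+\varepsilon)$ meets $\textswab D$ in more than one point, since $\textswab D$ has no isolated points. Total disconnectedness then gives a point of the complement between two such points, and so a gap $J$ contained in $(y-\varepsilon,y+\varepsilon)$. Such a gap has length less than $2\varepsilon$, which gives both $|J|\to0$ and $\dist(y,J)\to0$.
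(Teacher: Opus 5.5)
Your proof is correct and follows essentially the same route as the paper. You prove (c) first, from the finiteness of cubes of diameter $\ge\delta$ and the closedness of cubes; you then get $x_0\in\textswab{C}$ from (c), and (b) from gaps of $\textswab{D}$ accumulating at $y_0$ together with Remark~\ref{rem-july-1}(5). The deviations are minor: you rule out $y_0\notin\textswab{D}$ via $|J_j|\to 0$ rather than by placing $p_0$ in a cube on row $J$, and you justify details the paper leaves implicit, namely the finiteness of large cubes, the existence of small gaps near $y_0$, and the reverse inclusion $\textswab{C}\times\textswab{D}\subset\mathbf{X}$.
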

\begin{remark}
    Observe however that countably many points from $\textswab{C}  \times \textswab{D}$ do belong to the cubes.
\end{remark}
\begin{proof}[Proof of Lemma \ref{dan7}]
We first prove (c).
Let $p_j\in Q_j$ satisfy $\|p_j-p_0\|\to 0$.
For any cube $Q$, only finitely many $p_j$ can satisfy $p_j \in Q$; otherwise a subsequence of $p_j$ would lie within $Q$ and since $Q$ is closed, its limit $p_0$ would belong to $Q$, hence to $\mathbf{X}'$, which is a contradiction.

Let $\varepsilon>0$ be arbitrary. 
Note that
there are only finitely many cubes $Q$ with $\diam  Q \geq \varepsilon$. By the preceding argument, there exists $N_0\in\mathbb N$ such that if $j > N_0$, then $p_j$ is in none of those cubes. Hence, for every $j>N_0,$ we have $\diam  Q_j < \varepsilon$. This proves $\diam  Q_j\to 0$ as $j\to\infty$.

We next prove claim (a). Since $\mathbf{X}$ is the closure of the union of all cubes, there is a sequence of cubes $Q_j$ and points $p_j:=(x_j,y_j) \in Q_j$ such that $\|p_j-p_0\|\to 0$. By claim (c), $\diam  Q_j\to 0$.  For each $j \in \mbbn$, choose some $(x_j',y_j) \in Q_j$ with $ x_j' \in \textswab{C}  $ (such points exist). It now follows that
    \begin{align*}
    |x_j' - x_0| & \leq |x_j'-x_j| + |x_j - x_0| \leq \diam  Q_j+\|p_j-p_0\|.        
    \end{align*}
Both terms on the right converge to zero, hence $x_j' \to x_0$. Since $x_j' \in \textswab{C} $ for all $j$ and since $\textswab{C}$ is closed, we conclude that $x_0 \in \textswab{C}$. Now, observe that if $y_0$ were not in $\textswab{D}$, then $y_0$ would be in some $J \in \mathcal{J}$, and so, $p_0=(x_0,y_0)$ would belong to a cube on row $J$; which contradicts the assumption that $p_0 \notin \mathbf{X}'$. Thus, $y_0 \in \textswab{D}$ and we conclude that $p_0\in \textswab{C}\times \textswab{D}$.

Finally, we prove claim (b). By properties of the Cantor set $\textswab{D}$, there exists a sequence of $J_j \in \mathcal{J}$ and $y_j \in J_j$, such that $y_j \to y$ as $j \to \infty$. Clearly, each $p_j:=(x_0,y_j)$ belongs to a cube on row $J$ and $|y_j-y_0| = \|p_j-p_0\|\to 0$. The proof is complete.
\end{proof}

\begin{lemma}\label{lem3.9}
    $\mathbf{X}$ contains the set $\, \textswab{C} \times [\inf \textswab{D},\sup \textswab{D}]  $.
\end{lemma}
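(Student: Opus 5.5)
Since $\mathbf{X}$ is closed, it suffices to show that $\textswab{C}\times[\inf\textswab{D},\sup\textswab{D}]$ is contained in the closure of $\mathbf{X}'$. Fix $t\in\textswab{C}$ and $s\in[\inf\textswab{D},\sup\textswab{D}]$, and split into two cases according to whether $s$ lies in some $J\in\mathcal J$ or in $\textswab{D}$. By \eqref{J-intvl}, and since the $J$'s are taken closed, these two cases are exhaustive.

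\emph{Case 1: $s\in J$ for some $J\in\mathcal J$.} By Remark~\ref{rem-july-1}(5), there is a cube $Q=I\times J$ on row $J$ with $t\in I$. Then $(t,s)\in I\times J=Q\subset\mathbf{X}'\subset\mathbf{X}$, and nothing more is needed.

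\emph{Case 2: $s\in\textswab{D}$.} We use that $\textswab{D}$ has no isolated points and is totally disconnected. Any interval $(s-\delta,s+\delta)$ meets $[\inf\textswab{D},\sup\textswab{D}]$ in a nondegenerate interval; at least one side is nondegenerate, because $\inf\textswab{D}<\sup\textswab{D}$ (indeed \eqref{assum1} gives a gap of length $\ge 1$). This nondegenerate interval cannot be contained in $\textswab{D}$, since $\textswab{D}$ is totally disconnected. Hence it contains a point $s_\delta\notin\textswab{D}$, and $s_\delta$ lies in some $J_\delta\in\mathcal J$. Applying Case~1 to $(t,s_\delta)$ shows $(t,s_\delta)\in\mathbf{X}'$. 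Since $\|(t,s_\delta)-(t,s)\|<\delta$ and $\delta>0$ is arbitrary, $(t,s)\in\overline{\mathbf{X}'}=\mathbf{X}$. This is exactly the sequence construction implicit in Lemma~\ref{dan7}(b), but here it is applied to an arbitrary $t\in\textswab{C}$ rather than to a point already known to lie in $\mathbf{X}$.

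The only step needing care is the claim in Case~2 that points of $\textswab{D}$ are limits of points in the gaps. This follows directly from total disconnectedness, since every nondegenerate subinterval of $[\inf\textswab{D},\sup\textswab{D}]$ must meet some gap $J$. Everything else is immediate from Remark~\ref{rem-july-1}(5).
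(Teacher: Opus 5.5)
Your proof is correct and follows the paper's argument. Remark~\ref{rem-july-1}(5) places $\{t\}\times J$ inside a cube for every $J\in\mathcal J$, and density of $\bigcup\mathcal J$ in $[\inf\textswab{D},\sup\textswab{D}]$ together with closedness of $\mathbf X$ finishes the proof; your Case~2 spells out, via total disconnectedness of $\textswab{D}$, the density fact the paper asserts without proof.
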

\begin{proof}
    Fix an arbitrary $t \in \textswab{C}$. For every $J \in \mathcal{J}$, there exists a cube $Q=I \times J$ with $t \in I$ and hence 
    $\{t\} \times J \subset \textswab{C}\times [\inf \textswab{D},\sup \textswab{D}]$.
    Since $[\inf \textswab{D},\sup \textswab{D}]$ is the closure of the union of all $J$, and 
    $\mathbf X$
    is closed in the plane, it follows that $\{t\} \times [\inf \textswab{D},\sup \textswab{D}]$ lies in $\mathbf{X}$. Since $t \in \textswab{C}$ was arbitrary, the proof is complete.
\end{proof}
\subsection{Metric and measure properties of $\mathbf{X}$}
The set $\mathbf X$ inherits the Euclidean metric from $\mathbb R^2$. We equip $\mathbf{X}$ with the Lebesgue measure induced from $\mathbb R^2$, which we will denote by $\mu$. As $(\mathbf{X},\|\cdot\|)$ is compact in $\mathbb R^2$, a set $A\subset \mathbf X$ is $\mu$-measurable if and only if $A$ is Lebesgue measurable as a subset of $\mathbb R^2$.

We write $\ell(\gamma)$ for the length of (rectifiable) curves, with respect to this metric.
\begin{lemma}
    \label{qgd1}
    For every $p,q\in \mathbf{X}$ there exists a path $\gamma \colon [0,1] \to (\mathbf X, \|\cdot\|)$ with $\gamma(0)=p$, $\gamma(1)=q$, and $ \ell(\gamma) < \infty $.
\end{lemma}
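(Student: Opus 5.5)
The plan is to route every point through the vertical segments $\{t\}\times[\inf \textswab{D},\sup \textswab{D}]$, $t\in\textswab{C}$, which lie in $\mathbf X$ by Lemma~\ref{lem3.9}. These are joined to one another through the full-width cube $[0,1]\times J_0$ supplied by Remark~\ref{rem-july-1}(1), which exists because of assumption \eqref{assum1}. Write $a=\inf\textswab{D}$ and $b=\sup\textswab{D}$. Since $J_0\subset[a,b]$ is a closed interval and $[0,1]\times J_0\subset\mathbf X$, any two points $(t,y),(t',y')$ with $t,t'\in\textswab{C}$ and $y,y'\in[a,b]$ can be connected by a polygonal path. The path goes vertically from $(t,y)$ to $(t,c)$ with $c\in J_0$, then horizontally inside $[0,1]\times J_0$ to $(t',c)$, then vertically to $(t',y')$. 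Its length is at most $2(b-a)+1$.

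It therefore suffices to show that every $p\in\mathbf X$ can be joined by a rectifiable path to some point of $\textswab{C}\times[a,b]$. By Lemma~\ref{dan7}(a), either $p\in\textswab{C}\times\textswab{D}$, and then there is nothing to do, or $p\in\mathbf X'$. In the latter case $p$ lies in a cube $Q=I_{k,j}\times J$. By construction $I_{k,j}$ is a surviving interval of $\textswab{C}$, so it contains a point $t\in\textswab{C}$, for instance its left endpoint. Moreover $J\in\mathcal J$ is contained in $[a,b]$. Since $Q$ is a convex rectangle contained in $\mathbf X$, the straight segment from $p=(x,y)$ to $(t,y)$ lies in $Q\subset\mathbf X$. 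It has length at most $|I_{k,j}|\le 1$, and its endpoint $(t,y)$ is in $\textswab{C}\times[a,b]$.

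Concatenating the two connector segments with the path of the first paragraph gives a path from $p$ to $q$ of length at most $2(b-a)+3$, which is finite. Reparametrizing it over $[0,1]$ gives the required $\gamma$. The only step needing any care is confirming the ingredients already established: that $\mathbf X$ contains the vertical fibres over $\textswab{C}$ (Lemma~\ref{lem3.9}) and that a full-width row exists (Remark~\ref{rem-july-1}(1)). With these in place, the argument is routine. As a byproduct, the diameter bound shows that $\mathbf X$ is quasiconvex in a coarse sense, although the lemma itself only requires finiteness of length.
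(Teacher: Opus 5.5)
Your proof is correct and is essentially the paper's argument. The paper cites Lemma~\ref{lem3.9} together with the existence of some $J$ with $|J|\ge 1$ from \eqref{assum1} to obtain a path made of finitely many horizontal and vertical segments, which is exactly your polygonal path; you have supplied the details the paper omits, namely the full-width row $[0,1]\times J_0$, the horizontal connector to an endpoint of $I_{k,j}$, and the appeal to Lemma~\ref{dan7}(a) for points outside $\mathbf X'$.
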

\begin{proof}
Indeed, by Lemma~\ref{lem3.9} and by the assumption \eqref{assum1} that $|J| \geq 1$ for some $J$, there exists such a path consisting of finitely many horizontal and vertical segments. 
\end{proof}
We  additionally assume two key conditions on $\textswab{D}$, stated in terms of its complementary intervals.
\begin{definition}[$\rho$-separated]\label{uniform-separattion}
   Let $0<\rho<1$.
    We say that $\mathcal J$ is $\rho$-\textit{separated} if for every distinct intervals $J, J' \in \mathcal J$ with $\min\{|J|,|J'|\} \ge \ell$, we have $\dist(J,J') \ge \rho \, \ell$.
\end{definition}
\begin{definition}[$\tau$-dense]
    \label{def:density}
    Let $\tau\geq 1$.
    We say that $\mathcal {J}$ is \emph{$\tau$-dense} if for every $r \in (0,1]$ and every $y \in [\inf \textswab{D},\sup \textswab{D}]$, there exists $J \in \mathcal{J}$ such that  $|J| \geq r$ and the distance of $J$ to $y$ is at most $\tau r$.
\end{definition}
The two conditions are complimentary to each other: $\rho$-separatedness guarantees that large intervals are not too close to each other, whereas $\tau$-density ensures that large intervals are not too sparse.

For the self-similar Cantor sets both properties hold. The proof easily follows from the construction; see Remark~\ref{minit1}.
\begin{lemma}\label{prop3.16}
    Suppose $\, \textswab{D}=\frac{1}{1-2\lambda}\textswab{C}(\lambda)$, $\lambda \in (0,1/2)$. Then $\mathcal{J}$ is $\rho$-separated and $\tau$-dense for some $\tau$ and $\rho$ that (separately) depend only on $\lambda$.
\end{lemma}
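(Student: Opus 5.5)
We need to prove: for $\textswab{D}=\frac{1}{1-2\lambda}\textswab{C}(\lambda)$, the family $\mathcal J$ is $\rho$-separated and $\tau$-dense. The plan is to make the gap structure of $\textswab{D}$ explicit and verify both properties by elementary comparisons. Rescaling Remark~\ref{minit1} by the factor $s:=1/(1-2\lambda)$, the intervals of $\mathcal J$ are exactly the closures of the scaled removed intervals. At generation $k\ge 0$ there are $2^k$ of them, each of length $s(1-2\lambda)\lambda^k=\lambda^k$. In particular the largest has length $1$, consistent with \eqref{assum1}. The complement of the generation-$\le k$ gaps in $[0,s]$ consists of the $2^{k+1}$ scaled surviving intervals $sI_{k+1,j}$, each of length $s\lambda^{k+1}$.

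For $\rho$-separation, take distinct $J,J'\in\mathcal J$ with $\min\{|J|,|J'|\}\ge \ell$. We may assume $|J|=\lambda^k\le |J'|=\lambda^{k'}$, so $k'\le k$ and $\ell\le\lambda^k$. The gap $J$ lies inside some surviving interval $sI_{k,j}$; it is the central gap of that interval. Since the other gaps of generation $\le k$ are disjoint from the interior of $sI_{k,j}$, the gap $J'$ lies outside the interior of $sI_{k,j}$. Hence
\[
\dist(J,J')\ge \dist\bigl(J,\partial (sI_{k,j})\bigr)=s\lambda^{k+1}.
\]
This gives $\dist(J,J')\ge s\lambda\,\lambda^k\ge s\lambda\,\ell$. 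Thus we may take $\rho=\min\{s\lambda,1/2\}$, where the cap at $1/2$ just ensures $\rho<1$.

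For $\tau$-density, fix $r\in(0,1]$ and $y\in[0,s]$. Choose $k\ge 0$ with $\lambda^{k+1}<r\le\lambda^k$; the gaps of generation $k$ then have length $\lambda^k\ge r$. If $y$ lies in a gap of generation $<k$, that gap is longer than $r$ and contains $y$, so distance $0$ works. Otherwise $y$ lies in some $sI_{k,j}$, whose central gap $J$ has length $\lambda^k$. Then
\[
\dist(y,J)\le \diam(sI_{k,j})=s\lambda^k< s\lambda^{-1} r.
\]
So $\tau=\max\{1,s/\lambda\}$ works.

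The only point needing care is the separation argument: we must confirm that a gap of generation at most $k$ cannot enter the interior of another generation-$k$ surviving interval, i.e.\ that $J'$ really lies outside $sI_{k,j}$. This follows from the nested construction: gaps of generation $<k$ lie outside all surviving intervals of generation $k$, and generation-$k$ gaps are one per surviving interval. Both constants depend only on $\lambda$.
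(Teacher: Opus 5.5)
Your proof is correct and takes the route the paper indicates. The paper only remarks that both properties follow directly from the construction (Remark~\ref{minit1}), and your argument supplies those details: the margin $s\lambda^{k+1}$ on each side of a generation-$k$ gap gives separation, and the central gap of the step-$k$ interval containing $y$ gives density, with explicit constants $\rho=\min\{\lambda/(1-2\lambda),1/2\}$ and $\tau=\max\{1,1/(\lambda(1-2\lambda))\}$.
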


\subsubsection{Quasiconvexity} We define \emph{the length metric} on $\mathbf X$ by
    $$
    d(p,q):=\inf \{\ell(\gamma) : \text{$\gamma$ is a path in $(\mathbf X, \|\cdot\|)$ that joins $p$ to $q$} \}.
    $$
Lemma \ref{qgd1} shows that indeed $(\mathbf{X},d)$ is a metric space. 

Clearly, $d(p,q) \geq \|p-q\|$ for all points and $d(p,q) = \|p-q\|$ if the two points belong to the same cube. But a quantitative reverse relation holds, i.e.\ $(\mathbf X,\|\cdot\|)$ is so-called \emph{quasiconvex} (cf.\ \cite[Proposition~4.10]{Kosk-wild}).
\begin{proposition}
\label{prop:bilip-eqiv}
If in the construction of $\mathbf X$, corresponding to $\textswab{C}(\lambda)$ and $\textswab{D}$, the removed intervals for $\textswab{D}$ are $\rho$-separated and $\tau$-dense
for some $\lambda \in (0,1/2), \tau\geq 1, 0<\rho<1$,
then the identity map $(\mathbf X,d) \to (\mathbf X,\|\cdot\|)$ is bi-Lipschitz, with a constant that depends only on $\lambda$, $\rho$ and $\tau$.
\end{proposition}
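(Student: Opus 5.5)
Since $d(p,q)\ge \|p-q\|$ trivially, the plan is to prove the reverse inequality $d(p,q)\le C\|p-q\|$ with $C=C(\lambda,\rho,\tau)$ by building, for any two points, an explicit path made of finitely many (or countably many, with summable lengths) horizontal and vertical segments. By Lemma~\ref{dan7}(b), points of $\textswab{C}\times\textswab{D}$ not in cubes are limits of points $(x_0,y_j)$ lying on vertical segments $\{x_0\}\times[\inf\textswab{D},\sup\textswab{D}]\subset\mathbf X$ (Lemma~\ref{lem3.9}). Since $d$ is lower semicontinuous under such limits via concatenation with short vertical segments, it suffices to treat $p=(x_1,y_1)$, $q=(x_2,y_2)$ lying in cubes $Q_1=I_1\times J_1$, $Q_2=I_2\times J_2$. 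Set $r=\|p-q\|$.

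\emph{Step 1 (vertical escape).} Inside $Q_1$, move horizontally to a point with first coordinate in $\textswab{C}\cap I_1$; this costs at most $|I_1|\le\lambda^{-1}|J_1|$ by Definition~\ref{cin6}. Do the same in $Q_2$. Then both points lie on the ``Cantor columns'' $\textswab{C}\times[\inf\textswab{D},\sup\textswab{D}]$, along which vertical travel is free.

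\emph{Step 2 (choosing a connecting row).} If $x$-coordinates $t_1,t_2\in\textswab{C}$ are joined through the Cantor set, we need a row $J$ whose cube over the smallest $I\in\mathcal I$ containing $t_1,t_2$ exists, i.e.\ $|J|\gtrsim|I|\approx$ (by Remark~\ref{minit1}) the gap scale, which is $\lesssim |t_1-t_2|$ up to $\lambda$-constants. By $\tau$-density (Definition~\ref{def:density}) with radius $s\approx\max(|t_1-t_2|, \cdot)$, there is such a $J$ within vertical distance $\tau s$ of $y_1$. The path is then: vertical to row $J$, horizontal across the single cube over $I$, vertical to $y_2$. Its length is $\lesssim |y_1-y_2|+\tau s+ s$.

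\emph{Step 3 (the main obstacle: comparing the escape costs with $r$).} The difficulty is that $|I_1|,|I_2|$ can be much larger than $r$. When $p,q$ lie in the same cube, use the segment. When they lie in different cubes, I expect $\rho$-separation (Definition~\ref{uniform-separattion}) to give the required bound. If $J_1\ne J_2$, then $|y_1-y_2|$ is at least the gap between the rows, which is $\ge\rho\min(|J_1|,|J_2|)$. If $J_1=J_2$, the horizontal gap between distinct cubes on that row is $\ge(1-2\lambda)\lambda^{k-1}\gtrsim|J_1|$ by Remark~\ref{minit1}. The delicate case is a small cube near a large one, $|J_1|\ll|J_2|$. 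There, I would escape only from the smaller cube, whose size is $\lesssim r/\rho$. The point $p$ then enters the larger cube $Q_2$ directly: by Remark~\ref{rem-july-1}(4), the Cantor column through $I_1$ hits a cube over some $I'\supset I_1$ on row $J_2$, and that cube must be $Q_2$ or be separated from $q$ by $\gtrsim|J_2|\lesssim r$. Inside $Q_2$ the segment is Euclidean. Handling these cases carefully, with all constants depending only on $\lambda,\rho,\tau$, gives $d(p,q)\le C r$.
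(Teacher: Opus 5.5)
Your argument is correct and follows essentially the route the paper indicates; the paper only sketches it, deferring to the path families of Section~\ref{sec:curve-fam} and to \cite[Proposition~4.10]{Kosk-wild}. As you do, it moves each point to a Cantor column, travels vertically to a common large row supplied by $\tau$-density, and crosses horizontally, with $\rho$-separation and Remark~\ref{minit1} keeping the escape costs $\lesssim\|p-q\|$. Your case analysis is complete once the garbled final clause is read as follows: if the column through $t_1$ meets row $J_2$ in a cube $Q'\neq Q_2$, then
\[
(1-2\lambda)|J_2|\le|t_1-x_2|\le|I_1|+\|p-q\|\lesssim\|p-q\|/\rho ,
\]
so escaping from $Q_2$ is also affordable.
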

The idea is that to join a pair of points between cubes that do not align vertically, we first move ``vertically'', from each one, until we reach a common large cube that allows to join horizontally. The separatedness and density conditions ensure that we can do this within a reasonable distance from the original points. In Section~\ref{sec:curve-fam} we will construct much more: a family of curves between any two cubes. So, we do not provide a separate proof here.

It is a well-known fact that if we equip a proper quasiconvex space with the length metric, then it becomes geodesic; see e.g.~\cite[Lemmas~8.3.11 and 8.3.12]{HKST:15}.
\begin{corollary}
     The metric space $(\mathbf{X},d)$ is geodesic, and the identity map $(\mathbf X,\|\cdot\|) \to (\mathbf{X},d)$ is bi-Lipschitz. Moreover, the class of rectifiable curves and the length of individual curves are the same with respect to either metric.  
\end{corollary}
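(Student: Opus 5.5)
The plan is to derive all three assertions from Proposition~\ref{prop:bilip-eqiv} together with elementary facts about length metrics. First, since every path competing in the definition of $d(p,q)$ has Euclidean length at least $\|p-q\|$, we have $\|p-q\|\le d(p,q)$. Proposition~\ref{prop:bilip-eqiv} supplies the reverse bound $d(p,q)\le L\|p-q\|$, with $L$ depending only on $\lambda,\rho,\tau$. Hence the identity $(\mathbf X,\|\cdot\|)\to(\mathbf X,d)$ is bi-Lipschitz. In particular the two metrics induce the same topology and the same class of continuous paths, and $(\mathbf X,d)$ is compact, because $(\mathbf X,\|\cdot\|)$ is a closed bounded subset of $\mathbb R^2$.

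Next I will show that the two metrics give identical lengths. Let $\gamma\colon[a,b]\to\mathbf X$ be a path, and write $\ell_E$ and $\ell_d$ for length with respect to $\|\cdot\|$ and $d$.
\begin{itemize}
\item Since $d\ge\|\cdot\|$, every partition sum for $\ell_d$ dominates the corresponding one for $\ell_E$, so $\ell_d(\gamma)\ge\ell_E(\gamma)$.
\item Conversely, for a partition $a=t_0<\dots<t_m=b$, each subarc $\gamma|_{[t_{i},t_{i+1}]}$ is a path in $(\mathbf X,\|\cdot\|)$ joining its endpoints. By definition of $d$ this gives $d(\gamma(t_i),\gamma(t_{i+1}))\le\ell_E(\gamma|_{[t_i,t_{i+1}]})$. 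Summing and using additivity of $\ell_E$ over subintervals gives $\sum_i d(\gamma(t_i),\gamma(t_{i+1}))\le\ell_E(\gamma)$, hence $\ell_d(\gamma)\le\ell_E(\gamma)$.
\end{itemize}
Thus $\ell_d=\ell_E$ for every path. Consequently the rectifiable curves coincide, and so do their lengths and arc-length parametrizations. This also shows that $d$ is itself a length metric: $d(p,q)=\inf\ell_d(\gamma)$ over paths joining $p$ and $q$.

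Finally, geodesicity follows from the standard Arzelà--Ascoli argument, as in \cite[Lemmas~8.3.11 and 8.3.12]{HKST:15}.
\begin{itemize}
\item Fix $p,q$ and choose paths $\gamma_n$ from $p$ to $q$ with $\ell_d(\gamma_n)\to d(p,q)$. Such paths exist by Lemma~\ref{qgd1}, since lengths are finite.
\item Reparametrize each $\gamma_n$ on $[0,1]$ proportionally to arc length. The $\gamma_n$ are then uniformly Lipschitz (with constant $\sup_n\ell_d(\gamma_n)$) into the compact space $(\mathbf X,d)$.
\item By Arzelà--Ascoli a subsequence converges uniformly to a path $\gamma$ from $p$ to $q$.
\item By lower semicontinuity of length under pointwise convergence, $\ell_d(\gamma)\le\liminf\ell_d(\gamma_n)=d(p,q)$. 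The reverse inequality is trivial, so $\gamma$ is a geodesic.
\end{itemize}

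The only step needing care is the length equality. Its crux is that the subarc of a path is admissible in the infimum defining $d$, so I do not expect a genuine obstacle there. The real substance is contained in Proposition~\ref{prop:bilip-eqiv}, which I take as given.
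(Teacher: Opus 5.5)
Your proposal is correct and takes essentially the same route as the paper, which combines Proposition~\ref{prop:bilip-eqiv} with the standard facts in \cite[Lemmas~8.3.11 and 8.3.12]{HKST:15}: passing to the length metric of a proper quasiconvex space preserves curve lengths and produces a geodesic space. The only difference is that you write out those cited facts explicitly, namely the subarc argument giving $\ell_d=\ell_E$, and Arzel\`a--Ascoli together with lower semicontinuity of length for geodesicity.
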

\textit{In what follows, the default metric on $\mathbf{X}$ will be the intrinsic metric $d$, with respect to which $\mathbf X$ is geodesic.}

\subsubsection{Ahlfors $2$-regularity} Recall that $\mu$ is the restriction of the Lebesgue measure on $\mathbb R^2$ to $\mathbf X$.
\begin{proposition}
    \label{cin9}
Suppose that $\mathcal J$ is $\tau$-dense, $\tau\geq 1$, in the sense of Definition~\ref{def:density}.
Then $(\mathbf{X},d,\mu)$ is Ahlfors 2-regular, with constants that depend only on $\lambda$ and $\tau$.
\end{proposition}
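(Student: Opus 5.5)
The plan is to prove the two bounds $\mu(B(p,r))\lesssim r^2$ and $\mu(B(p,r))\gtrsim r^2$ separately, for $p\in\mathbf X$ and $0<r<\diam\mathbf X$. Since the identity $(\mathbf X,d)\to(\mathbf X,\|\cdot\|)$ is bi-Lipschitz by Proposition~\ref{prop:bilip-eqiv}, it suffices to work with Euclidean balls $B_E(p,r)\cap\mathbf X$, at the cost of changing $r$ by a constant factor. Strictly speaking, Proposition~\ref{prop:bilip-eqiv} also uses $\rho$-separation. To avoid that dependence I would instead do the lower bound with the Euclidean metric and then pass to $d$-balls. The upper bound is immediate: $d\ge\|\cdot\|$ gives $B_d(p,r)\subset B_E(p,r)$, so $\mu(B_d(p,r))\le \mathcal L^2(B_E(p,r))=\pi r^2$.

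For the lower bound, first suppose $p=(x_0,y_0)$ with $x_0\in\textswab{C}$ and $y_0\in[\inf\textswab{D},\sup\textswab{D}]$. By Lemma~\ref{dan7}(a) and the fact that every cube is of the form $I_{k,j}\times J$ with $I_{k,j}\cap\textswab{C}\neq\emptyset$, we can reduce a general $p$ to this case. Indeed, if $p$ lies in a cube $Q=I\times J$ with $\diam Q\ge r/10$, then $B(p,r)\cap Q$ already contains a square of side $\approx r$, because $Q$ is a rectangle whose shorter side $|I|=\lambda^k$ is comparable (up to $\lambda$) to its longer side $|J|$. If instead $\diam Q<r/10$, we may replace $p$ by a point of $(\textswab{C}\cap I)\times J$ at distance $<r/10$, with vertical coordinate in $[\inf\textswab D,\sup\textswab D]$, and shrink $r$ slightly.

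Now fix $r\le c$, for a small constant $c$ depending on $\lambda,\tau$; larger $r$ are handled by comparing with $\mu(\mathbf X)$ and the cube $[0,1]\times J$ of Remark~\ref{rem-july-1}(1). Apply $\tau$-density with radius $s=r/(4\tau)$ to get $J\in\mathcal J$ with $|J|\ge s$ and $\dist(y_0,J)\le\tau s=r/4$. Let $k$ be the integer with $\lambda^k\le|J|<\lambda^{k-1}$. By Remark~\ref{rem-july-1}(5), there is a cube $I_{k,j}\times J$ with $x_0\in I_{k,j}$. This cube is a rectangle of side lengths $\lambda^k$ and $|J|$, and both are at least $\lambda s$. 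It contains points at Euclidean distance at most $r/4$ from $p$, namely $(x_0,y)$ with $y\in J$ nearest to $y_0$. Hence $B_E(p,r/2)$ contains a square of side $\min(\lambda s,r/4)\gtrsim r$ inside this cube, and so $\mu(B_E(p,r/2))\gtrsim_{\lambda,\tau} r^2$.

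To transfer this to $d$-balls without the bi-Lipschitz proposition, I would note that the path from $p$ that goes vertically along $\{x_0\}\times[\inf\textswab D,\sup\textswab D]$ (which lies in $\mathbf X$ by Lemma~\ref{lem3.9}) to the cube, and then stays inside the cube (which is convex), has length comparable to the Euclidean distance. So the square found above lies in $B_d(p,Cr)$, and the lower bound follows after rescaling $r$.

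The main obstacle is the reduction step for points in small cubes and the bookkeeping of scales. There one must check that a nearby point of $\textswab{C}\times[\inf\textswab D,\sup\textswab D]$ is reachable by a short path, for example by moving horizontally within the cube to $x\in\textswab{C}\cap I$ and then vertically. One must also check that all constants depend only on $\lambda$ and $\tau$.
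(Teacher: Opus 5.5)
Your proposal is correct and follows essentially the paper's argument: the upper bound comes from $d\ge\|\cdot\|$, and the lower bound splits into the case where the cube containing $p$ is large compared to $r$ and the case where it is small, in which you move horizontally to a point of $\textswab{C}$ and then vertically along $\{x_0\}\times[\inf\textswab D,\sup\textswab D]$ (Lemma~\ref{lem3.9}) to a row of length $\gtrsim r$ supplied by $\tau$-density, using explicit paths rather than Proposition~\ref{prop:bilip-eqiv}, exactly as the paper does. The only minor difference is that you treat $p\in\mathbf X\setminus\mathbf X'$ directly via Lemma~\ref{dan7}(a), since such points already have first coordinate in $\textswab C$, whereas the paper approximates them by points of $\mathbf X'$ and passes to the limit.
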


\begin{proof}
Let $p=(x,y)\in X$ and $0<r<\diam \mathbf X$. Since $\|p-q\|\leq d(p,q)$ for  all points $q\in X$, we have $B(p,r) \subset \{q: \|q-p\|<r\}$. Hence, the upper bound $\mu(B(p,r)) \leq \pi r^2$ is always true. So, it remains to show
    \begin{equation}\label{3.3-june}
   \mu(B(p,r)) \gtrsim r^2,
    \end{equation}
with a constant that does not depend on $r, p$.

We choose the (not so significant) cutoff of $ 2+\tau $. By applying the density condition with $r=1$, we see that \eqref{3.3-june} holds for all $ 2+\tau \le r < \diam \mathbf X$ (compare to case (2) below). Hence, we shall restrict to the case of $r\le 2+\tau$. First, let us assume that $p\in X'$. Let $Q=I\times J$ be the cube that contains $p=(x,y)$. We obtain \eqref{3.3-june} by considering two cases.
    
{Case (1):} When $r/(2+\tau) \le |J|$, we have a uniform quantitative bound, in terms of $\lambda$ and $\tau$, on the measure of the portion of $B(p,r)$ that lies inside $Q=I\times J$. So, in this case, $\mu(B(p,r)) \gtrsim r^2$ holds with a constant that depends only on $\lambda$ and $\tau$.

{Case (2)}: When $|J| < r/(2+\tau)$, we find a lower bound on $\mu(B(p,r))$ as follows. Write $r':=r/(2+\tau)$, which is less than $1$. We fix $t \in \textswab{C}\cap I$ and consider $p_1=(t,y)$. By the density of $\mathcal J$, we find $J'\in \mathcal{J}$
such that $|J'| \ge r'$ and $|y-y_0| \le \tau r'$ for some $y_0 \in J'$. Let $p_2:=(t,y_0)$.
    
 Because $d(p,p_1) \le |I|<r'$ and $d(p_2,p_1)\le \tau r'$, from the triangle inequality we get
    \[
    B(p_2,r') \subset B(p_1,(\tau+1)r') \subset B(p,r).
    \]
Since $r'\le |J'|$, {case (1)} applied to the point $p_2$ and the radius $r'$ yields 
$$
\mu(B(p,r)) \ge \mu(B(p_2,r')) \gtrsim r^2.
$$
We have proved \eqref{3.3-june} for $p \in \mathbf X'$. To prove \eqref{3.3-june} for $p \in \mathbf X \setminus \mathbf X'$, we apply Lemma ~\ref{dan7} to choose a sequence $p_j \in \mathbf X'$ such that $d(p_j,p) \to 0$. For every $\eps \in (0, r/2)$ there exists $j$ such that $B(p_j,r-\eps) \subset B(p,r)$. Applying the previous analysis to $p_j\in\mathbf X'$ gives
    \[
    \mu(B(p,r)) \geq \mu(B(p_j,r-\eps)) \gtrsim (r-\eps)^2,
    \]
with the constants independent of $\varepsilon$. By letting $\varepsilon\to 0$, we obtain the desired estimate $\mu(B(p,r)) \gtrsim r^2$. The proof is complete.
\end{proof}
\subsection{Comparison to the construction in~{\bf \cite{Kosk-wild}}}\label{sec:bilip-kosk-wil}
First of all one must rotate the construction in \cite{Kosk-wild} by $90$ degrees for the comparisons below to apply. Let us denote the space constructed in \cite{Kosk-wild} by $\widetilde{X}$. Under our notation, $ \widetilde{X} $ corresponds to our $\textbf{X}$ with the choice of $\textswab{C}=\textswab{C}(3^{-m})$ and $\textswab{D}=\textswab{C}(3^{-n})$, where $1 \le m <n $ are integers such that $\frac{n}{m} \in \{2,3,\ldots\}$. Notice that $\textswab{D}$ is not scaled here.

The surviving intervals $I$ for $\textswab{C}$ have lengths of the form $3^{-im}$ and the removed intervals $\mathcal J$ for $\textswab{D}$ have lengths of the form $3^{-i'n}(3^n-2)$; see Remark~\ref{minit1}. Therefore, very importantly,
\begin{equation}\label{ratio-never-1}
    \text{the ratios $\frac{|I|}{|J|}$ are bounded away from $1$}
\end{equation}
by a quantity that depends only on $n$ and $m$.

For fixed $J$, the following set will be in $\widetilde{X}$, except possibly for
the few largest $J$,
\begin{equation}\label{tilde-pieces}
    \{(x,y): \inf J + \dist(x,\textswab{C}) \le y \le \sup J - \dist(x,\textswab{C}) \}.
\end{equation}
Observe that depending on $|J|$ this consists of multiple connected components that are congruent to one another. They will be bounded by vertical rigid shifts of the graphs of $x \mapsto \pm \dist(x,\textswab{C})$ (Figure~\ref{fig:kosk-wild}).

\begin{figure}[h]
    \begin{tikzpicture}[scale=3]
\foreach \m in {0,2} 
{
    \draw[line width=.2pt] (\m+1,0) -- (\m+1+1/3,1/3);
    \draw[line width=.2pt] (\m,0) -- (\m-1/3,1/3);
    \draw[line width=.2pt] (\m+1,2/3) -- (\m+1+1/3,2/3-1/3);
    \draw[line width=.2pt] (\m,2/3) -- (\m-1/3,2/3-1/3);
}

\foreach \m in {0,2} 
{
    \draw[line width=.2pt] (\m+1/3,0) -- (\m+1/3+0.5*1/3,1/6);
    \draw[line width=.2pt] (\m+1/3+0.5*1/3,1/6) -- (\m+1/3+1/3,0);
}

\foreach \m in {0,2/3, 2, 2+2/3} 
\foreach \i in {1/9} 
\draw[line width=.2pt] (\m+\i,0) -- (\m+\i+0.5*1/9,1/18) -- (\m+\i+1/9,0);

\foreach \m in {0,2/9,2/3,8/9,  2,2+2/9,2+2/3,2+8/9}
\foreach \i in {1/27} 
\draw[line width=.2pt] (\m+\i,0) -- (\m+\i+0.5*1/27,1/54) -- (\m+\i+1/27,0);

\foreach \m in {0,2/27,6/27,8/27, 18/27, 20/27, 24/27,26/27, 2,2+2/27,2+6/27,2+8/27,2+18/27,2+20/27,2+24/27,2+26/27}
\foreach \i in {1/81} 
\draw[line width=.2pt] (\m+\i,0) -- (\m+\i+0.5*1/81,1/162) -- (\m+\i+1/81,0);

\foreach \y in {2/3}
\foreach \m in {0,2} 
{
    \draw[line width=.2pt] (\m+1/3,\y) -- (\m+1/3+0.5*1/3,\y-1/6);
    \draw[line width=.2pt] (\m+1/3+0.5*1/3,\y-1/6) -- (\m+1/3+1/3,\y);
}

\foreach \y in {2/3}
\foreach \m in {0,2/3, 2, 2+2/3} 
\foreach \i in {1/9} 
\draw[line width=.2pt] (\m+\i,\y) -- (\m+\i+0.5*1/9,\y-1/18) -- (\m+\i+1/9,\y);

\foreach \y in {2/3}
\foreach \m in {0,2/9,2/3,8/9,  2,2+2/9,2+2/3,2+8/9}
\foreach \i in {1/27} 
\draw[line width=.2pt] (\m+\i,\y) -- (\m+\i+0.5*1/27,\y-1/54) -- (\m+\i+1/27,\y);

\foreach \y in {2/3}
\foreach \m in {0,2/27,6/27,8/27, 18/27, 20/27, 24/27,26/27, 2,2+2/27,2+6/27,2+8/27,2+18/27,2+20/27,2+24/27,2+26/27}
\foreach \i in {1/81} 
\draw[line width=.2pt] (\m+\i,\y) -- (\m+\i+0.5*1/81,\y-1/162) -- (\m+\i+1/81,\y);

\end{tikzpicture}
\caption{The space in \cite{Kosk-wild} is, practically, the union of the interiors of regions bounded between vertical shifts of the graphs of $x \to \pm \dist(x,\textswab{C})$.}
\label{fig:kosk-wild}
\end{figure}

The projection of each one of such pieces onto the $x$-axis is equal to 
$$
\Bigl[\inf I -\frac{|J|}{2},\sup I + \frac{|J|}{2}\Bigr],
$$
where $I$ is a surviving interval with a maximal length $|I|$ subject to $|I|\le |J|$.

Due to \eqref{ratio-never-1}, each of these pieces is bi-Lipschitz equivalent, with constant depending only on $n$ and $m$, to the rectangle $Q=I\times J$. (There is a quantitative lower bound on how close the top and bottom ``teeth'' can get to each other, after normalizing for scale.)

As in our case, one needs a large piece to ensure the connectivity of the space. We do so in this paper by scaling $\textswab{D}$ such that at least one $J$ has $|J| \ge 1$. Instead, in \cite{Kosk-wild} they ``manually'' attach one large piece to one end of the union of the pieces above. Finally, like we do, they add $\textswab{C} \times \textswab{D}$ to complete the space.

In \cite{Kosk-wild} they prove that $\widetilde{X}$, equipped with the standard Euclidean distance and measure, is quasiconvex (\cite[Proposition~4.10]{Kosk-wild}) and Ahlfors $2$-regular (\cite[Proposition~4.11]{Kosk-wild}). They use $\widetilde{X}$ to prove the necessity of a $p$-Poincar\'e inequality in the second claim of Theorem~\ref{bakoro}. They conjecture that $\widetilde{X}$ might support some weaker Poincar\'e inequality but still better than a $2$-Poincar\'e inequality.

Although the construction in \cite{Kosk-wild} is the inspiration for our $\mathbf X$ and very similar in nature, our construction is much less technical, which is why we are able to prove sharp Poincar\'e inequalities.

We just mentioned that the individual building blocks of $\widetilde{X}$ are uniformly bi-Lipschitz to the building blocks (the cubes) in our construction of $\mathbf{X}$. The relative positioning of these blocks in the two spaces are also very similar (but one has to consider only ``half of'' our $\mathbf{X}$ for this). Thus, by some more analysis one can show that the spaces are (globally) bi-Lipschitz. We do not carry out the details for the same reason that calculations become unruly.

Therefore, by proving a $p$-Poincar\'e inequality for $\mathbf X$, we indirectly confirm that the space $\widetilde{X}$ in \cite{Kosk-wild} satisfies a $p$-Poincar\'e inequality for the same range of $p$; thus confirming their conjecture.
 
\section{Path Families in \protect\boldmath {$\mathbf{X}$}}
\label{sec:curve-fam}
In this section we construct path families that will act as the building blocks of the more complicated ones used later in the proof of the Poincar\'e inequality on $\textbf{X}$. We continue with the notation set forth in the previous sections. In particular, $\textswab{C}=\textswab{C}(\lambda)$, for some $\lambda \in (0, 1/2)$, and $\mathcal {J}$, the collection of the complementary intervals of $\textswab{D}$, is dense and separated in the sense of Definitions~\ref{uniform-separattion} and ~\ref{def:density}.

We write $\lambda^*:=\frac{\lambda}{2(1-\lambda)}$, observe that $2\lambda^*=\sum_{j=1}^\infty \lambda^j$. Also set $I_{01}=[0,1]$; and $Q_{I_{01}}:=[0,1] \times [0,\lambda^*]$.

\subsection{The path family \protect\boldmath {$\Gamma(I)$}}
\label{sec-pathQ}
 First consider $I=I_{01}=[0,1]$. The construction of $\Gamma(I)$ is self-similar and quite intuitive (Figure~\ref{pic-gam0}).

 \begin{figure}[h]
    {\centering
    \begin{tikzpicture}[line cap=round,line join=round,scale=0.2]
  \draw[thick] (0,0) rectangle (32,16);
    \draw[dashed] (0,4) -- (8,4);
     \draw[dashed] (24,4) -- (32,4);
    \draw[dashed] (0,2) -- (2,2);
    \draw[dashed] (6,2) -- (8,2);
    \draw[dashed] (24,2) -- (26,2);
    \draw[dashed] (30,2) -- (32,2);
  \foreach \x in {0,0.5,1,1.5,2,2.5,3,3.5,4,4.5,5,5.5,6,6.5,7,7.5,8,8.5,9,9.5,10,10.5,11,11.5,12,12.5,13,13.5,14,14.5,15,15.5,16}
  {
    \draw (0.5*\x,4) -- (\x,16);
    \draw (32-0.5*\x,4) -- (32-\x,16);
    }

\foreach \x in {0,1,2,3,4,5,6,7,8,9,10,11,12,13,14,15,16}
{
    \draw (0.125*\x,2) -- (0.25*\x,4);
    \draw (8-0.125*\x,2) -- (8-0.25*\x,4);
}

\foreach \x in {0,1,2,3,4,5,6,7,8,9,10,11,12,13,14,15,16}
{
    \draw (24+0.125*\x,2) -- (24+0.25*\x,4);
    \draw (32-0.125*\x,2) -- (32-0.25*\x,4);
}

\foreach \x in {0,2,4,6,8,10,12,14,16}
{
    \draw (0.03125*\x,1) -- (0.0625*\x,2);
    \draw (6+0.03125*\x,1) -- (6+0.0625*\x,2);
    \draw (24+0.03125*\x,1) -- (24+0.0625*\x,2);
    \draw (30+0.03125*\x,1) -- (30+0.0625*\x,2);
    \draw (2-0.03125*\x,1) -- (2-0.0625*\x,2); 
    \draw (8-0.03125*\x,1) -- (8-0.0625*\x,2);
    \draw (26-0.03125*\x,1) -- (26-0.0625*\x,2);
    \draw (32-0.03125*\x,1) -- (32-0.0625*\x,2);
}
\foreach \i in {0.3,1.8,6.3,7.8,24.3,25.8,30.3,31.8}
\foreach \k in {0.1,0.4,0.7}
{
    \fill (\i,.1) ++(0,\k) circle[radius=0.05];
  }
\node at (-2.2,10) {$\frac{\lambda}{2}$};
\node at (-2.2,3) {$\frac{\lambda^2}{2}$};
\node at (34,10) {$\lambda^*$};
\end{tikzpicture}
    \caption{The path family $\Gamma(I_{01})$.}
    }
    \label{pic-gam0}
\end{figure}
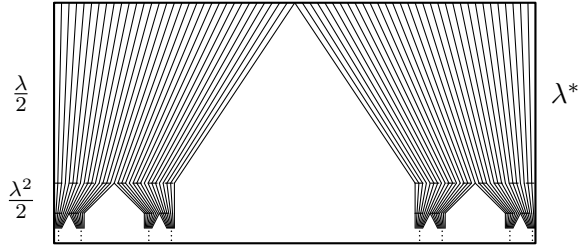

We begin with a copy of $[0,1]$ at height $\lambda^*$. We split it into two halves. For $t \in [0,1/2)$, the path $\gamma_t$ that we wish to construct begins as the straight line that joins $(t,\lambda^*)$ to $(b(t),\lambda^*-\lambda/2)$, where $b(t)$ is the image of $t$ under the canonical bijection between $[0,1/2]$ and $[0,\lambda]$. For $t \in (1/2,1]$, $\gamma_t$ joins $(t,\lambda^*)$ to $(b(t),\lambda^*-\lambda/2)$ where now $b$ is the canonical bijection between $[1/2,1]$ and $[1-\lambda,1]$.

We do a similar split on each of the intervals of length $\lambda$ that contain the endpoints of $\gamma_t$, currently at height $\lambda^*-\lambda/2$. This time, we drop in height by $\lambda^2/2$.

We repeat inductively ad infinitum. At each step, we do not assign a path to the midpoints of the intervals. Note that these piecewise linear paths, at all stages, deviate from the vertical direction by a controlled angle that depends only on $\lambda$.

By parameterizing carefully and concatenating the pieces, for every $t \in [0,1]$, with the exception of a countable set of ``middle points'', we associate a path $\gamma_t$ that is parameterized over $(0,\lambda^*]$ and the Lipschitz constant of $\gamma_t$ depends only on $\lambda$. Thus 
each $\gamma_t$ has a unique extension to $[0,\lambda^*]$, with the same Lipschitz constant. By construction, we have $\gamma_t(0) \in \textswab{C}(\lambda) \times \{0\}$.

We denote by $\Gamma(I_{01})$ the collection of all these $\gamma_t$. The exceptional set of $t$ will never be important as we will integrate in $t$ with respect to the Lebesgue measure, so, we think of $\Gamma(I_{01})$ as being indexed over $[0,1]$, and might not point out the exceptional set in future instances.

\begin{definition}
Given $I \in \mathcal I$, we scale by the factor $|I|$ (and shift) every aspect of the construction of the path family $\Gamma(I_{01})$ to obtain the path family $\Gamma(I)$, indexed over $I$. The Lipschitz constant for the paths remains the same as for $\Gamma(I_{01})$, hence, depends only on $\lambda$. When $Q=I\times J$, we use the same notation $\Gamma(I)$ for its (possibly upside-down) copies that lie along the top and bottom edges of $Q$.
\end{definition}

\begin{remark}
    It may look counter-intuitive, but for every Cantor point there is exactly one curve that lands on it; except for countably many Cantor points that receive no curves at all. So, we could think of the path families as being indexed over $\textswab{C}$, but we will not utilize this point of view. Figure~\ref{curtains} can be helpful here.
\end{remark}
The following is immediate from the construction.
\begin{lemma}[Self-similarity in $\Gamma(I)$]
\label{gammacopy}
    Suppose $I,I'$ in $\mathcal{I}$, with $I' \subset I$. For every $t' \in I'$, $\gamma_{t'} \in \Gamma(I')$ is a subpath of $\gamma_{t} \in \Gamma(I)$ for a unique $t \in I$. Both $\gamma_{t'}$ and $\gamma_t$ end on the same point on $\textswab{C} \cap I'$. Moreover, $t' \mapsto t$ is a canonical bijection from $I'$ to a subinterval of $I$.
\end{lemma}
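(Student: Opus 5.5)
The plan is to reduce everything to the normalized case $I=I_{01}$, $I'=I_{k,j}$, using that $\Gamma(I)$ and $\Gamma(I')$ are affine images of $\Gamma(I_{01})$. First I make the recursive structure of $\Gamma(I_{01})$ explicit. At stage $m\ge 1$ the path $\gamma_t$ sits at height $h_m:=\lambda^*-\sum_{i=1}^m\lambda^i/2$. Its horizontal position there is $b_m(t)$, which lies in a stage-$m$ surviving interval $I_{m,i}$. Here $b_m$ is the composition of the canonical bijections used in the first $m$ splits; on each dyadic subinterval $D_{m,i}$ of $[0,1]$ of length $2^{-m}$, it is the canonical bijection $D_{m,i}\to I_{m,i}$. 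The key observation is that the portion of $\gamma_t$ below height $h_m$ is generated by exactly the same rule as $\Gamma(I_{01})$, but started from the point $b_m(t)\in I_{m,i}$ at height $h_m$. The recursion from that stage on only uses the position of the point inside the current interval $I_{m,i}$, splitting it into halves and dropping by $\lambda^{m+1}/2,\lambda^{m+2}/2,\dots$. This is the construction of $\Gamma(I_{01})$ scaled by $\lambda^m=|I_{m,i}|$.

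Next I check the heights. The family $\Gamma(I_{m,i})$ starts at height $\lambda^*|I_{m,i}|=\lambda^m\lambda^*=\sum_{i>m}\lambda^i/2$ above the Cantor level. This equals $h_m$, since $\lambda^*=\sum_{i\ge1}\lambda^i/2$. So the initial segment of $\Gamma(I_{m,i})$ is placed exactly at height $h_m$ over $I_{m,i}$, and the scaled construction coincides with the tail of $\Gamma(I_{01})$.

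With this, for $t'\in I'=I_{m,i}$ I define $t:=b_m^{-1}(t')\in D_{m,i}$. Since $b_m$ restricted to $D_{m,i}$ is the canonical bijection onto $I_{m,i}$, the map $t'\mapsto t$ is the canonical bijection from $I'$ onto the subinterval $D_{m,i}\subset I$, as claimed. The tail of $\gamma_t$ from height $h_m$ down is then, by the previous step, precisely $\gamma_{t'}\in\Gamma(I')$ up to parametrization. In particular both paths end at the same point, which is $\gamma_{t'}(0)\in\textswab{C}\cap I'$; it lies in $I'$ because all later positions stay in nested subintervals of $I'$. The excluded midpoints correspond under $b_m$, so the exceptional countable sets match.

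Uniqueness of $t$ follows because distinct $t$ give paths that are distinct at height $h_m$, as $b_m$ is injective. A path $\gamma_t$ with $t\notin D_{m,i}$ never passes through height $h_m$ over $I'$, so it cannot contain $\gamma_{t'}$. For general $I$ and $I'\subset I$, I apply the affine normalization sending $I$ to $I_{01}$; this sends $I'$ to some $I_{m,i}$ by the self-similarity of $\textswab{C}(\lambda)$. The main technical point is the height bookkeeping $h_m=\lambda^m\lambda^*$; the rest is unwinding the definition.
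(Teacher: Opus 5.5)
Your proposal is correct and takes essentially the paper's approach. The paper simply declares the lemma immediate from the construction, and you supply the bookkeeping it leaves implicit: $h_m=\lambda^m\lambda^*$, and $b_m|_{D_{m,i}}$ is the canonical bijection onto $I_{m,i}$.

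One small inaccuracy is harmless: the claim that the exceptional sets match exactly is not quite right. An endpoint of $D_{m,i}$ can be an excluded midpoint from an earlier stage (e.g.\ $t=1/2$ when $I'=I_{1,1}$), while the corresponding endpoint of $I'$ does carry a path in $\Gamma(I')$. So the correspondence misses up to two values of $t'$, which is irrelevant under the paper's convention of ignoring countable exceptional sets.
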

It will be very important that every path in $\Gamma(I)$ is uniquely determined by the point on the Cantor set that it passes through.

\subsubsection{The path family \protect\boldmath {$\Gamma(Q)$}}
Recall that a cube in our space $\mathbf X$ is a rectangle $Q=I \times J$, where $ |I| \leq |J| < \lambda^{-1}|I|$. Because $\lambda^* <1/2$, we can place a copy of $\Gamma(I)$ at the bottom edge of $Q$ and an upside-down copy of $\Gamma(I)$ at the top edge of $Q$. This will still leave a rectangular blank piece along the horizontal mid segment of $Q$. By attaching honest vertical paths here we can concatenate the two copies of $\Gamma(I)$ and have a family of paths that join the copy of the Cantor set $\textswab{C}$ along the top edge of $Q$ to the copy of the Cantor set $\textswab{C}$ along the bottom edge of $Q$. (Figure \ref{pic-gamQ}).

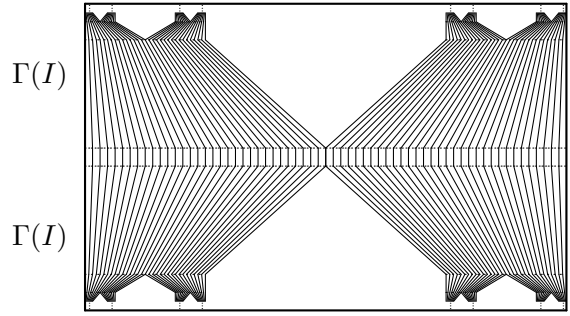
\begin{figure}
    \centering
\begin{tikzpicture}[line cap=round,line join=round,xscale=0.2,yscale=0.12]
\begin{scope}[yscale=-1, shift={(0,-34)}]
    \draw[thin, dotted] (0,4) -- (8,4);
     \draw[thin, dotted] (24,4) -- (32,4);
    \draw[thin, dotted] (0,2) -- (2,2);
    \draw[thin, dotted] (6,2) -- (8,2);
    \draw[thin, dotted] (24,2) -- (26,2);
    \draw[thin, dotted] (30,2) -- (32,2);
  \foreach \x in {0,0.5,1,1.5,2,2.5,3,3.5,4,4.5,5,5.5,6,6.5,7,7.5,8,8.5,9,9.5,10,10.5,11,11.5,12,12.5,13,13.5,14,14.5,15,15.5,16}
  {
    \draw (0.5*\x,4) -- (\x,16);
    \draw (32-0.5*\x,4) -- (32-\x,16);
    }

\foreach \x in {0,1,2,3,4,5,6,7,8,9,10,11,12,13,14,15,16}
{
    \draw (0.125*\x,2) -- (0.25*\x,4);
    \draw (8-0.125*\x,2) -- (8-0.25*\x,4);
}

\foreach \x in {0,1,2,3,4,5,6,7,8,9,10,11,12,13,14,15,16}
{
    \draw (24+0.125*\x,2) -- (24+0.25*\x,4);
    \draw (32-0.125*\x,2) -- (32-0.25*\x,4);
}

\foreach \x in {0,2,4,6,8,10,12,14,16}
{
    \draw (0.03125*\x,1) -- (0.0625*\x,2);
    \draw (6+0.03125*\x,1) -- (6+0.0625*\x,2);
    \draw (24+0.03125*\x,1) -- (24+0.0625*\x,2);
    \draw (30+0.03125*\x,1) -- (30+0.0625*\x,2);
    \draw (2-0.03125*\x,1) -- (2-0.0625*\x,2); 
    \draw (8-0.03125*\x,1) -- (8-0.0625*\x,2);
    \draw (26-0.03125*\x,1) -- (26-0.0625*\x,2);
    \draw (32-0.03125*\x,1) -- (32-0.0625*\x,2);
}
\foreach \i in {0.3,1.8,6.3,7.8,24.3,25.8,30.3,31.8}
\foreach \k in {0.1,0.4,0.7}
{
    \fill (\i,.1) ++(0,\k) circle[radius=0.05];
  }
  \end{scope}
    \draw[thin, dotted] (0,4) -- (8,4);
     \draw[thin, dotted] (24,4) -- (32,4);
    \draw[thin, dotted] (0,2) -- (2,2);
    \draw[thin, dotted] (6,2) -- (8,2);
    \draw[thin, dotted] (24,2) -- (26,2);
    \draw[thin, dotted] (30,2) -- (32,2);
  \foreach \x in {0,0.5,1,1.5,2,2.5,3,3.5,4,4.5,5,5.5,6,6.5,7,7.5,8,8.5,9,9.5,10,10.5,11,11.5,12,12.5,13,13.5,14,14.5,15,15.5,16}
  {
    \draw (0.5*\x,4) -- (\x,16);
    \draw (32-0.5*\x,4) -- (32-\x,16);
    }

\foreach \x in {0,1,2,3,4,5,6,7,8,9,10,11,12,13,14,15,16}
{
    \draw (0.125*\x,2) -- (0.25*\x,4);
    \draw (8-0.125*\x,2) -- (8-0.25*\x,4);
}

\foreach \x in {0,1,2,3,4,5,6,7,8,9,10,11,12,13,14,15,16}
{
    \draw (24+0.125*\x,2) -- (24+0.25*\x,4);
    \draw (32-0.125*\x,2) -- (32-0.25*\x,4);
}

\foreach \x in {0,2,4,6,8,10,12,14,16}
{
    \draw (0.03125*\x,1) -- (0.0625*\x,2);
    \draw (6+0.03125*\x,1) -- (6+0.0625*\x,2);
    \draw (24+0.03125*\x,1) -- (24+0.0625*\x,2);
    \draw (30+0.03125*\x,1) -- (30+0.0625*\x,2);
    \draw (2-0.03125*\x,1) -- (2-0.0625*\x,2); 
    \draw (8-0.03125*\x,1) -- (8-0.0625*\x,2);
    \draw (26-0.03125*\x,1) -- (26-0.0625*\x,2);
    \draw (32-0.03125*\x,1) -- (32-0.0625*\x,2);
}
\foreach \i in {0.3,1.8,6.3,7.8,24.3,25.8,30.3,31.8}
\foreach \k in {0.1,0.4,0.7}
{
    \fill (\i,.1) ++(0,\k) circle[radius=0.05];
}

\foreach \x in {0,0.5,1,1.5,2,2.5,3,3.5,4,4.5,5,5.5,6,6.5,7,7.5,8,8.5,9,9.5,10,10.5,11,11.5,12,12.5,13,13.5,14,14.5,15,15.5,16}
{
    \draw(\x,16) -- (\x,18);
    \draw (32-\x,16) -- (32-\x,18);
    }
\draw[thick] (0,0) rectangle (0,34);
\draw[thick] (32,0) rectangle (32,34);
\draw[thick] (0,0) rectangle (32,0);
\draw[thick] (0,34) rectangle (32,34);
\draw[thin, dotted] (0,16) rectangle (32,16);
\draw[thin, dotted] (0,18) rectangle (32,18);

\node at (-3,26) {$\Gamma(I)$};
\node at (-3,8) {$\Gamma(I)$};
\end{tikzpicture}
    \caption{The path family $\Gamma(Q)$.}
    \label{pic-gamQ}
\end{figure}

We parameterize each path on the interval $J$ such that they have the same Lipschitz constant as the paths in $\Gamma(I)$. In particular, this Lipschitz constant depends only on $\lambda$. Observe also that $\gamma_t(\inf J)$ and $\gamma_t(\sup J)$ have the same first coordinate, which belongs to $\textswab{C} \cap I$; they join ``the same'' Cantor point on the two edges. We denote this path family by $\Gamma(Q)$.

By abuse of notation we will use $\gamma_t$ both for paths in $\Gamma(I)$ and in $\Gamma(Q)$.

\subsection{The path Family \protect\boldmath {$\Gamma({Q_1,Q_2})$}} 
\label{secjo1} These path families, and the subsequent integral estimates on them, will be significant in what follows as they ``encode'' all the infinitesimal scale analysis in $\mathbf X$.

We say that $Q_2=I_2 \times J_2$ is the first large cube below $Q_1=I_1 \times J_1$ if all of the following are satisfied (Figure~\ref{path-Q1-Q2}):
\begin{itemize}
    \item $\sup J_2 < \inf J_1$,
    \item $I_1 \subset I_2$, and
    \item if $Q=I\times J$ for some $J \subset (\sup J_2, \inf J_1)$, then $|I|\le |I_1|$.
\end{itemize}
Analogously, we can talk about the first large cube \emph{above} $Q_1$.

We set some notation:
    $$
    \mathcal{Q}(Q_1,Q_2):=\{Q=I\times J: J \subset (\sup J_2, \inf J_1) \; \text{and} \; I \subset I_1 \},
    $$
    $$
    \mathcal{J}(Q_1,Q_2):= \{J \in \mathcal{J}: J \subset (\sup J_2, \inf J_1)\}.
    $$

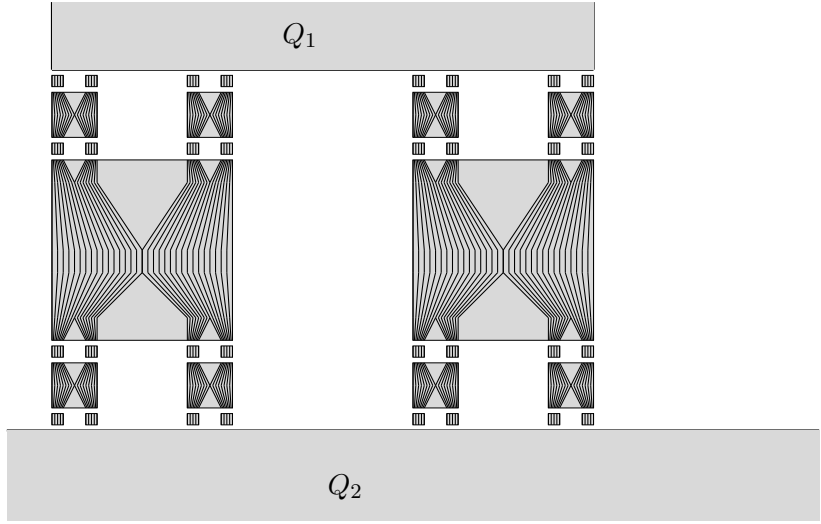
\begin{figure}[h] 
   \centering
\begin{tikzpicture}[scale=0.15]
  \draw[thick] (-4,118) -- ++(72,0);
  \fill[gray!30] (-4,110) rectangle ++(72,8); 

  \draw[thick] (0,150) -- (48,150);
  \draw[thick] (0,150) -- (0,156);
  \draw[thick] (48,150) -- (48,156);
  \fill[gray!30] (0,150) rectangle ++(48,6); 
    
    \foreach \k in {126} 
    \foreach \l in {0, 32} 
    \draw[line width=.2pt,fill=gray!30] (\l,\k) rectangle ++(16,16);

    \foreach \m in { 120, 144 } 
    \foreach \n in {0, 12, 32, 44} \draw[line width=.2pt,fill=gray!30] (\n,\m) rectangle ++(4,4);

    \foreach \m in {118.5, 124.5, 142.5, 148.5 } 
    \foreach \n in {0, 3, 12, 15, 32, 35, 44, 47} \draw[line width=.2pt,fill=gray!30] (\n,\m) rectangle ++(1,1);

\foreach \z in {0,32}
{
  \foreach \x in {0,1,2,3,4,5,6,7,8,9,10,11,12,13,14,15,16}
  {
    \draw[thin] (\z+0.5*\x,132) -- (\z+0.25*\x,128);
    \draw[thin] (\z+16-0.5*\x,132) -- (\z+16-0.25*\x,128);
    \draw[thin] (\z+0.5*\x,134) -- (\z+0.25*\x,140);
    \draw[thin] (\z+16-0.5*\x,134) -- (\z+16-0.25*\x,140);
     \draw[thin] (\z+0.5*\x,132) -- (\z+0.5*\x,134);
    \draw[thin] (\z+16-0.5*\x,132) -- (\z+16-0.5*\x,134);
}
}

\foreach \z in {0,32}
{
\foreach \y in {122, 140, 146}
{
  \foreach \x in {0,1,2,3,4,5,6,7,8}
  {
   \draw[thin] (\z+0.25*\x,\y) -- (\z+0.125*\x,\y+2);
 \draw[thin] (\z+4-0.25*\x,\y) -- (\z+4-0.125*\x,\y+2);
  \draw[thin] (\z+12+0.25*\x,\y) -- (\z+12+0.125*\x,\y+2);
    \draw[thin] (\z+16-.25*\x,\y) -- (\z+16-0.125*\x,\y+2);
}
}
}

\foreach \z in {0,32}
{
\foreach \y in {144, 126, 120}
{
  \foreach \x in {0,1,2,3,4,5,6,7,8}
  {
       \draw[thin] (\z+12+0.125*\x,\y) -- (\z+12+0.25*\x,\y+2);
        \draw[thin] (\z+16-0.125*\x,\y) -- (\z+16-0.25*\x,\y+2);
        \draw[thin] (\z+0.125*\x,\y) -- (\z+0.25*\x,\y+2);
        \draw[thin] (\z+4-0.125*\x,\y) -- (\z+4-0.25*\x,\y+2);
}
}
}

\foreach \w in {0,32}
{
\foreach \z in {0,3,12,15}
{
\foreach \y in {148.5, 142.5, 124.5,118.5}
{
\foreach \x in {0,2,4,6,8}
{
\draw[thin] (\w+\z+0.125*\x,\y) -- (\w+\z+0.125*\x,\y+1);
  }
  }
  }
  }

\node at (26,113) {\scalebox{0.99}{$Q_2$}};
\node at (22,153) {\scalebox{0.99}{$Q_1$}};
\end{tikzpicture}
 \caption{The path family $\Gamma(Q_1,Q_2)$. Clearly, some fine features of the paths within individual cubes are not present in the figure.}
    \label{path-Q1-Q2}
\end{figure}

Recall that for every $t \in I_1$, with the usual exception of countably many $t$, the path $\gamma_t \in \Gamma(I_1)$ is well-defined and lands on a point $x_t \in \textswab{C} \cap I_1$. Fix, $J \in \mathcal{J}(Q_1,Q_2)$. Then there is some $Q=I \times J$ that belongs to $\mathcal{Q}(Q_1,Q_2)$ and such that $x_t \in I$.  There is a unique curve $\gamma_{t_J}  \in \Gamma(Q)$ that joins $(x_t,\sup J)$ to $(x_t,\inf J)$. Notice that all these $\gamma_{t_J} $ pass through the same Cantor point on each of the horizontal edges of each $Q$.

Define a function $ \eta_t\colon \bigcup \mathcal J(Q_1,Q_2) \to \mathbf X$ by $\eta_t|_J = \gamma_{t_J} $, where $\gamma_{t_J} $ was identified in the preceding paragraph. By a triangle inequality, we can show that $\eta_t$ is Lipschitz, with the Lipschitz constant the same as the common Lipschitz constant that we have had for all families $\Gamma(I)$ and $\Gamma_Q$ thus far.

Since $\bigcup \mathcal J(Q_1,Q_2)$ is dense in $[\sup J_2,\inf J_1]$, the function $\eta_t$ has a unique Lipschitz extension $\eta_t\colon [\sup J_2,\inf J_1] \to \mathbf X$ with the same Lipschitz constant. We denote the collection of all $\eta_t, t \in I_1$ by $\Gamma(Q_1,Q_2)$. Analogous path family is built when $Q_2$ is the first large cube \emph{above} $Q_1$.

\section{Polar Integration Along the Path Families in \protect\boldmath {$\mathbf X$}}
\label{sec:polar-int}
\subsection{Integration along the path family \protect\boldmath {$\Gamma(I)$}}
Recall that $I_{01}=[0,1]$, and $Q_{I_{01}}=[0,1]\times[0,\lambda^*]$. We decompose the trace of $\Gamma(I)$ into pieces $E_{ij}$ according to Figure~\ref{curtains}.

\begin{figure}[h]
    \centering
    \begin{tikzpicture}[line cap=round,line join=round,scale=0.25]

  \draw[thick] (0,0) rectangle (32,16);
    \draw[dashed] (0,4) -- (8,4);
    \draw[dashed] (0,2) -- (8,2);    
    \draw[dashed] (0,1) -- (8,1);
  \foreach \x in {12,12.5,13,13.5,14,14.5,15,15.5,16}
  {
    \draw (0.5*\x,4) -- (\x,16);
    }

\foreach \x in {0,1,2,3,4,5,6,7,8}
{
        \draw (8-0.125*\x,2) -- (8-0.25*\x,4);
}

\foreach \x in {0,2,4,6,8,10,12,14,16}
{
    \draw (8-0.03125*\x,1) -- (8-0.0625*\x,2);
}

\node at (-1.5,10) {$\lambda/2$};
\node at (17,13.5) {$L_{ij}$};
  \coordinate (A) at (16.5,14.4);
  \coordinate (B) at (15,15.8);
  \draw[->, thick] (A) to (B);

    \node at (10,1.5) {$E_{ij}$};
    \coordinate (C) at (9,1.5);
  \coordinate (D) at (8.2,1.5);
  \draw[->, thick] (C) to (D);

      \node at (11,4.5) {$L'_{ij}$};
    \coordinate (E) at (10,4);
  \coordinate (F) at (7.7,2.1);
  \draw[->, thin] (E) to (F);
\end{tikzpicture}
    \caption{Decomposition of the trace of $\Gamma(I)$.}
    \label{curtains}
\end{figure}
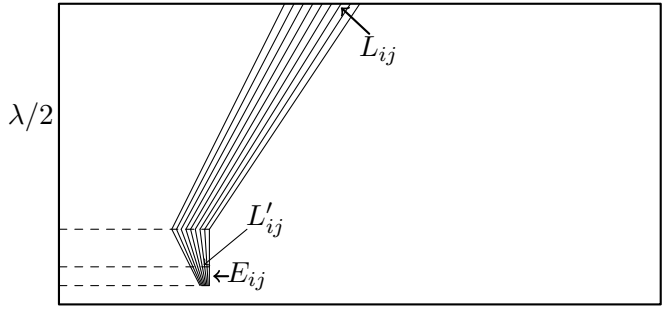

Namely, $i\in\mathbb N$ counts the layers and $j=1,\ldots,2^i$ indexes the congruent pieces within layer $i$. One can write explicit description for the sets $E_{ij}$, but there is not much gain in doing so.

Each $E_{ij}$ is a trapezoid with the ratio of the lengths of its longer to shorter side depending only on $\lambda$. We denote by $L'_{ij}$ the top horizontal edge of $E_{ij}$. There is a subset $L_{ij} \subset [0,1]$ such that $E_{ij}$ intersects and is covered by the trace of the paths $\gamma_t \in \Gamma(I_{01}), t \in L_{ij}$. Notice that $\cup_j L_{ij} = [0,1]$.

Clearly,
\begin{equation}\label{Lij-L'}
    |L_{ij}| = 2^{-i}, \quad |L'_{ij}| = \lambda^{i-1}/2.
\end{equation}

Let $b\colon L'_{ij} \to L_{ij}$ be the canonical bijection. Then by the coarea formula (Lemma~\ref{lem:coarea-intgr}), there exists a constant $C>0$, which depends only on $\lambda$, such that for every measurable function $g\colon Q_{I_{01}} \to [0,\infty]$ 
\begin{equation}
\label{fubp2}
\int_{L'_{ij}}\Bigl(\int_{\gamma_{b(t)}} g \chi_{E_{i,j}} \, ds\Bigr) \, dt \leq C \int_{E_{i,j}} g \, d\mu.
\end{equation}
From \eqref{Lij-L'} and \eqref{fubp2}, and a simple $1$-dimensional change of variables via the bijection $b$, we deduce that:
\begin{equation*}
    \int_{L_{i,j}}\int_{\gamma_t}g\chi_{E_{i,j}}dsdt \le C (2\lambda)^{-i}\int_{E_{i,j}} g \, d\mu,
\end{equation*}
where $C$ again depends only on $\lambda$.

By summation over $j$ we conclude the following (where $E_i:=\cup_j E_{ij}$):
\begin{lemma}
    \label{Ei-fubini}
    There is a 
    constant $C>0$, only depending on $\lambda$, such that for every measurable function $g: Q_{I_{01}}\to[0,\infty]$ and every $i\in\mathbb N $,
    \begin{equation}
        \label{ei-fub-eq}
    \int_{I_{01}}\int_{\gamma_t} g \chi_{E_i} \, dsdt \le C {(2\lambda)^{-i}}\int_{E_i} g \, d\mu.
    \end{equation}
\end{lemma}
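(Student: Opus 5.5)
The plan is to prove the estimate one trapezoid $E_{ij}$ at a time and then add up over $j$; the text just before the statement already sketches this route. Fix $i\in\mathbb N$ and $j\in\{1,\dots,2^i\}$. By construction, the paths $\gamma_t$ with $t\in L_{ij}$ cross $E_{ij}$ as straight segments. Each such segment joins a point of the top edge $L'_{ij}$ to the corresponding point of the bottom edge under the canonical bijection. The ratio of the two edge lengths is bounded above and below by constants depending only on $\lambda$, and so are the ratio of the height to $|L'_{ij}|$ and the angle to the vertical. Hence Lemma~\ref{lem:coarea-intgr}, applied to $E_{ij}$ with the long side playing the role of $A$, gives \eqref{fubp2} with a constant $C=C(\lambda)$. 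This uses the bound $\|D\pi\|\lesssim L/\ell\le C(\lambda)$. There is one point to check here: the parameter of the segments in Lemma~\ref{lem:coarea-intgr} is the position on the longer edge, whereas in \eqref{fubp2} it is the position on $L'_{ij}$. The two parametrizations differ by an affine map with derivative comparable to $1$, depending only on $\lambda$, so this costs only a constant.

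Next I change variables from $L'_{ij}$ to $L_{ij}$. By \eqref{Lij-L'}, the canonical bijection $b\colon L'_{ij}\to L_{ij}$ is affine with derivative
\[
|L_{ij}|/|L'_{ij}|=2^{-i}\cdot 2\lambda^{1-i}=2(2\lambda)^{-i}\lambda.
\]
Substituting $t=b(s)$ gives
\[
\int_{L_{ij}}\int_{\gamma_t} g\chi_{E_{ij}}\,ds\,dt=\frac{|L_{ij}|}{|L'_{ij}|}\int_{L'_{ij}}\int_{\gamma_{b(s)}} g\chi_{E_{ij}}\,d\sigma\,ds\le C(2\lambda)^{-i}\int_{E_{ij}} g\,d\mu .
\]

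Finally, I observe that for $t\notin L_{ij}$ the path $\gamma_t$ meets $E_{ij}$ at most in a set of length zero, namely on the boundary segments shared with neighbouring trapezoids. So for every $t\in I_{01}$ outside the countable exceptional set,
\[
\int_{\gamma_t} g\chi_{E_i}\,ds=\sum_j \int_{\gamma_t} g\chi_{E_{ij}}\,ds .
\]
The term with index $j$ is nonzero only if $t\in L_{ij}$. Moreover, the sets $L_{ij}$, $j=1,\dots,2^i$, partition $[0,1]$ up to finitely many points, and the $E_{ij}$ for fixed $i$ have pairwise disjoint interiors. Integrating over $t\in I_{01}$ and summing the per-piece bounds therefore gives \eqref{ei-fub-eq}, because $\sum_j\int_{E_{ij}}g\,d\mu=\int_{E_i}g\,d\mu$.

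The main obstacle I expect is the bookkeeping in the first step: checking that the geometry of every $E_{ij}$ is uniformly controlled. This follows from self-similarity. Each $E_{ij}$ is a scaled copy of $E_{1,1}$ or its mirror image, with scale factor $\lambda^{i-1}$, so the constant in Lemma~\ref{lem:coarea-intgr}, which is scale invariant, is the same for all $i,j$. The remaining steps are routine measure theory.
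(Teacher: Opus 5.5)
Your proposal is correct and follows the paper's proof essentially verbatim. The steps are the same: coarea on each trapezoid $E_{ij}$ via Lemma~\ref{lem:coarea-intgr}, the affine change of variables $L'_{ij}\to L_{ij}$ with derivative $|L_{ij}|/|L'_{ij}|=2\lambda(2\lambda)^{-i}$, and summation over $j$. Your worry about parametrizing by the longer edge is moot, since $|L'_{ij}|=\lambda^{i-1}/2>\lambda^{i}$ means $L'_{ij}$ already is the longer side. Your explicit control of the angle to the vertical is a useful precision that the paper's statement of Lemma~\ref{lem:coarea-intgr} leaves implicit.
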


\begin{proposition}
\label{kernel-gamI0}
Let $p > 2-{\rm dim} \ \textswab{C}$. There is a constant $C>0$, depending only on $p$ and $\lambda$, such that for every measurable function $g:Q_{I_{01}}\to[0,\infty]$,
\begin{equation*}
        \int_{I_{01}}\int_{\gamma_t} g \, dsdt \leq C \Big(\int_{Q_{I_{01}}} g^p \, d\mu\Big)^{1/p}.
\end{equation*}
\end{proposition}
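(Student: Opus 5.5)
The plan is to decompose the trace of $\Gamma(I_{01})$ into the layers $E_i=\bigcup_j E_{ij}$, $i\in\mathbb N$, as in Figure~\ref{curtains}. We apply Lemma~\ref{Ei-fubini} on each layer, then H\"older's inequality on each layer, and finally sum a geometric series. Since every $\gamma_t$ lies in $\bigcup_i E_i$ up to a set of $\mathcal H^1$-measure zero on each path (the Cantor endpoint), we have
\begin{equation*}
\int_{I_{01}}\int_{\gamma_t} g\, ds\,dt=\sum_{i=1}^\infty \int_{I_{01}}\int_{\gamma_t} g\chi_{E_i}\, ds\,dt \le C\sum_{i=1}^\infty (2\lambda)^{-i}\int_{E_i} g\, d\mu .
\end{equation*}

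Next we estimate $\mu(E_i)$. The layer $E_i$ consists of $2^i$ trapezoids. Each has horizontal sides comparable to $\lambda^{i}$ and height $\lambda^i/2$, with constants depending only on $\lambda$, by \eqref{Lij-L'}. Hence $\mu(E_i)\approx 2^i\lambda^{2i}$. H\"older's inequality on $E_i$ gives
\begin{equation*}
\int_{E_i} g\, d\mu \le \mu(E_i)^{1-1/p}\Bigl(\int_{E_i} g^p\,d\mu\Bigr)^{1/p}\lesssim (2\lambda^2)^{i(1-1/p)}\Bigl(\int_{Q_{I_{01}}} g^p\,d\mu\Bigr)^{1/p}.
\end{equation*}
Multiplying by $(2\lambda)^{-i}$, the $i$-th term is bounded by $\theta^i\,\|g\|_{L^p(Q_{I_{01}})}$, where
\begin{equation*}
\theta:=(2\lambda)^{-1}(2\lambda^2)^{1-1/p}=2^{-1/p}\lambda^{1-2/p}.
\end{equation*}

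The series $\sum_i\theta^i$ converges, with sum depending only on $p$ and $\lambda$, exactly when $\theta<1$. Taking logarithms, $\theta<1$ is equivalent to $-\tfrac1p\log 2+(1-\tfrac2p)\log\lambda<0$. Multiplying by $p>0$ and dividing by $\log(1/\lambda)>0$, this becomes $p>2-\log 2/\log(1/\lambda)=2-\dim\textswab{C}$, which is precisely the hypothesis.

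The only step requiring care is justifying the decomposition: the layers $E_i$ should cover the traces up to null sets and overlap boundedly, and the area bound $\mu(E_i)\approx 2^i\lambda^{2i}$ must hold. The other potential worry is whether one can crudely replace $\int_{E_i}g^p$ by the full integral $\int_{Q_{I_{01}}}g^p$ in every term, rather than summing the layers with H\"older in $\ell^p$. This replacement is harmless because the geometric decay $\theta^i$ absorbs it. Since the $E_i$ are essentially disjoint, one could instead use H\"older for sums, but this is unnecessary.
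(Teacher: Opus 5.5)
Your proof is correct and follows the paper's argument: the same layer decomposition via Lemma~\ref{Ei-fubini}, H\"older's inequality on each $E_i$, and the estimate $\mu(E_i)\approx 2^i\lambda^{2i}$. The only difference is the last step. The paper applies H\"older's inequality to the discrete sum, which produces the series $\sum_i (2\lambda)^{-\frac{p}{p-1}i}\mu(E_i)$ with ratio $\theta^{p/(p-1)}$. You instead bound each $\int_{E_i}g^p\,d\mu$ by $\int_{Q_{I_{01}}}g^p\,d\mu$ and sum $\sum_i\theta^i$. Both series converge exactly when $\theta<1$, so your cruder bound loses nothing.
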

\begin{proof}
    We apply H\"older's inequality to the integral on the right-hand side of \eqref{ei-fub-eq}, then sum over $i$, and apply a H\"older's inequality to the discrete sum, and obtain
\begin{align*}
        \int_{I_{01}}\int_{\gamma_t} g \, dsdt &= \sum_{i=0}^\infty \int_{I_{01}} \int_{\gamma_t} g \chi_{E_i} \, dsdt \notag \\
        & \lesssim \sum_{i=0}^\infty {(2\lambda)^{-i}}\int_{E_i} g \, d\mu \notag \\
        & \lesssim \sum_{i=0}^\infty (2\lambda)^{-i}\Bigl(\mu(E_i)\Bigr)^{(p-1)/p} \Bigl(\int_{E_i} g^p \, d\mu\Bigr)^{1/p}\notag\\
        &\lesssim \Bigl(\sum_{i=0}^\infty (2\lambda)^{-\frac{p}{p-1}i} \mu(E_i)\Bigr)^{\frac{p-1}{p}} \Bigl(\sum_{i=0}^\infty \int_{E_i}g^pd\mu\Bigr)^{\frac{1}{p}}.
\end{align*}
Observe that $ \mu(E_i) $ is proportional, by a multiplicative factor that depends only on $\lambda$, to $2^i\lambda^{2i}$. Hence, if  $p > 2+\log 2/\log \lambda = 2-{\rm dim} \ \textswab{C}$, then the sum
$$
\sum_{i=0}^\infty (2\lambda)^{-\frac{p}{p-1}i} \mu(E_i)
$$
is convergent and its value depends only on $\lambda$ and $p$. The proof is complete.
\end{proof}
We write $Q_I:= I \times [0,\lambda^*|I|]$. By using the self-similarity in our constructions and following the various scaling factors, we obtain estimates for arbitrary $\Gamma(I)$:
\begin{proposition}
\label{kern-gamI-thm}
     Let $p > 2-{\rm dim} \ \textswab{C}$.
      Then there exists a constant $C>0$,
     depending only on $p$ and $\lambda$, such that for every 
     $I\in\mathcal I$
     and for every measurable function $g:Q_{I}\to[0,\infty]$,
     \begin{equation}
     \label{kern-gamI-eq}
        \intavg_{I} \int_{\gamma_t} g  \, ds dt \le  C |I|^{1-\frac{2}{p}}\Big(\int_{Q_I} g^p \, d\mu\Big)^{1/p}.
    \end{equation}
\end{proposition}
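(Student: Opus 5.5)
The plan is to reduce Proposition~\ref{kern-gamI-thm} to Proposition~\ref{kernel-gamI0} by a scaling argument. Fix $I\in\mathcal I$ and write $a=\inf I$. Let $\phi\colon\mathbb R^2\to\mathbb R^2$ be the similarity $\phi(z)=(a,0)+|I|z$. By the definition of $\Gamma(I)$, the map $\phi$ carries $Q_{I_{01}}$ onto $Q_I=I\times[0,\lambda^*|I|]$. It also carries the family $\Gamma(I_{01})$ onto $\Gamma(I)$: for $s\in I_{01}$, the path $\gamma_s\in\Gamma(I_{01})$ is sent to $\gamma_{a+|I|s}\in\Gamma(I)$. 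The countable exceptional sets of indices correspond under this map as well, so they play no role.

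Given a measurable $g\colon Q_I\to[0,\infty]$, I define $\tilde g:=g\circ\phi$ on $Q_{I_{01}}$ and record three scaling identities.
\begin{itemize}
\item Line integrals: $\phi$ multiplies lengths by $|I|$, so $\int_{\phi\circ\gamma_s} g\,ds=|I|\int_{\gamma_s}\tilde g\,ds$.
\item Index variable: the substitution $t=a+|I|s$ turns the average over $I$ into the average over $I_{01}$, which is the integral over $I_{01}$ since $|I_{01}|=1$. Hence
$$\intavg_I\int_{\gamma_t}g\,ds\,dt=|I|\int_{I_{01}}\int_{\gamma_s}\tilde g\,ds\,ds'.$$
\item Area: since $\mu$ is Lebesgue measure, $\int_{Q_I}g^p\,d\mu=|I|^2\int_{Q_{I_{01}}}\tilde g^p\,d\mu$.
\end{itemize}

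Next I apply Proposition~\ref{kernel-gamI0} to $\tilde g$. Its constant depends only on $p$ and $\lambda$. Combining this with the three identities gives
$$\intavg_I\int_{\gamma_t}g\,ds\,dt\le C|I|\Big(\int_{Q_{I_{01}}}\tilde g^p\,d\mu\Big)^{1/p}=C|I|\cdot|I|^{-2/p}\Big(\int_{Q_I}g^p\,d\mu\Big)^{1/p}.$$
This is exactly \eqref{kern-gamI-eq}, with the same constant $C$.

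I do not expect a substantive obstacle. The one point requiring care is to confirm that the scaled family is literally $\Gamma(I)$, including its parametrization up to reparametrization. Line integrals are parametrization-invariant, so only the traces and the indexing bijection $s\mapsto a+|I|s$ matter, and both are built into the definition of $\Gamma(I)$. One should also check that $\Gamma(I)$ sits along the bottom edge of $Q_I$ as $\phi$ dictates; if a copy is placed upside-down along a top edge, a reflection composed with $\phi$ handles it identically.
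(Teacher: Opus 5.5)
Your proposal is correct and follows the paper's proof. The paper also rescales by the affine similarity $Q_{I_{01}}\to Q_I$ that agrees with the canonical bijection $I_{01}\to I$. This picks up a factor $|I|$ in the line integrals, lets the average over $I$ be re-indexed as the integral over $I_{01}$, and gives the factor $|I|^{-2}$ in the area integral, after which Proposition~\ref{kernel-gamI0} finishes the argument.
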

\begin{proof}
    Let $\beta \colon Q_{I_{01}} \to Q_{I}$ be the affine bijection that coincides with the canonical bijection $b \colon I_{01} \to I$ along their top edges. Then a change of variables for the line integral yields,
    $$
    \int_{\gamma_t} g  ds = |I| \int_{\gamma_{b^{-1}(t)}} (g \circ \beta) \ ds, \quad \text{for every $t \in I$}.
    $$
    Then, integrating over all $t$ and setting $t'=b^{-1}(t)$ gives
    \begin{align*}
         \intavg_{I} \int_{\gamma_t} g  dsdt=& |I|\int_{I_{01}} \Bigl(\int_{\gamma_{t'}} g \circ \beta \ ds \Bigr)dt'.
    \end{align*}
Now, Lemma~\ref{kernel-gamI0} and a change of variables for the area integral complete the proof:
    \begin{align*}
        \intavg_{I} \int_{\gamma_t} g  dsdt \lesssim |I| \Bigl( \int_{Q_{I_{01}}}g\circ \beta \ d\mu  \Bigr)^{\frac{1}{p}} & = |I| \Bigl( |I|^{-2}\int_{Q_{I}}g\ d\mu  \Bigr)^{\frac{1}{p}}. 
    \end{align*}
\end{proof}
\begin{remark}\label{rem:free-par} 
We prefer to work with integral average over $I$ as we view it as a probability measure on the path families. This will allow us to easily re-index the path families from any different interval. If we wish to not emphasize the specific interval of indexing of a path family $\Gamma$, we then write $\intavg_\Gamma F(\gamma) \, d\gamma$, where $F(\gamma)$ is a nonnegative function(al).
\end{remark}

\subsubsection{Integration along the path family \protect\boldmath {$\Gamma(Q)$}}
Recall that $\Gamma(Q)$ consists of the concatenation of two copies of $\Gamma(I)$ and a collection of vertical segments between them, see Subsection \ref{sec-pathQ}. From Proposition~\ref{kern-gamI-thm}, one proves the following:
\begin{proposition}
    \label{kern-gam-Q}
Let $p > 2-{\rm dim} \ \textswab{C}$. There exists a constant $C>0$, that depends only on $p$ and $\lambda$, such that for every $Q=I\times J$ and every measurable function $g:Q\to[0,\infty]$,
     \begin{equation}
     \label{kern-gam-Q-eq}
        \intavg_{I} \int_{\gamma_t} g  \, ds dt \leq  C|I|^{1-\frac{2}{p}}\Big(\int_{Q} g^p \, d\mu\Big)^{1/p}.
    \end{equation}
\end{proposition}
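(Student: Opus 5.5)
The plan is to split each path $\gamma_t\in\Gamma(Q)$ into its three pieces and estimate each separately, using $\int_{\gamma_t} g\,ds = \int_{\gamma_t} g\chi_{Q^{\rm bot}}\,ds + \int_{\gamma_t} g\chi_{S}\,ds + \int_{\gamma_t} g\chi_{Q^{\rm top}}\,ds$. Here $Q^{\rm bot}$ and $Q^{\rm top}$ are the copies of $Q_I$ (the bottom one and the reflected top one) carrying the two copies of $\Gamma(I)$, and $S = I\times[\inf J+\lambda^*|I|,\ \sup J-\lambda^*|I|]$ is the middle rectangle crossed by vertical segments. Since $\lambda^*<1/2$ and $|I|\le |J|$, the two copies of $Q_I$ fit inside $Q$ without overlap, so $S$ has width $|I|$ and height $|J|-2\lambda^*|I|\in(0,|J|)$.

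For the bottom piece, the restriction of $\gamma_t$ to $Q^{\rm bot}$ is precisely the path of $\Gamma(I)$ with index $t$. Proposition~\ref{kern-gamI-thm} applied to $g|_{Q^{\rm bot}}$ then gives
\[
\intavg_I\int_{\gamma_t} g\chi_{Q^{\rm bot}}\,ds\,dt\le C|I|^{1-\frac2p}\Bigl(\int_{Q^{\rm bot}}g^p\,d\mu\Bigr)^{1/p}.
\]
The top piece is handled the same way after composing with the reflection across the horizontal midline of $Q$. This reflection is an isometry preserving $\mu$, so the constant is unchanged. One point needs care: the indexing. The vertical segment attached to $\gamma_t$ sits at the abscissa where the top-level segment of $\Gamma(I)$ begins, which is $t$ itself, and the top copy uses the same index $t$. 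So both halves are parametrized by the same $t\in I$ up to the identity, and no Jacobian factor appears.

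For the middle piece, each $\gamma_t$ meets $S$ in the vertical segment $\{t\}\times[\inf J+\lambda^*|I|,\ \sup J-\lambda^*|I|]$, traversed once (the reparametrization on $J$ only changes speed, and the arc-length integral is intrinsic). Fubini gives $\int_I\int_{\gamma_t}g\chi_S\,ds\,dt=\int_S g\,d\mu$. Hölder's inequality then gives
\[
\intavg_I\int_{\gamma_t}g\chi_S\,ds\,dt\le |I|^{-1}\mu(S)^{1-\frac1p}\Bigl(\int_S g^p\,d\mu\Bigr)^{1/p}.
\]
Using $\mu(S)\le |I||J|<\lambda^{-1}|I|^2$, which holds because $|J|<\lambda^{-1}|I|$ for a cube, the prefactor is at most $\lambda^{-(1-1/p)}|I|^{-1+2-2/p}=C|I|^{1-\frac2p}$. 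This is exactly where the cube condition $|I|\le|J|<\lambda^{-1}|I|$ is used: it keeps the aspect ratio of $S$ bounded.

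Summing the three estimates and bounding each $\int_{\cdot}g^p$ by $\int_Q g^p$ yields \eqref{kern-gam-Q-eq} with a constant depending only on $p$ and $\lambda$. I expect no real obstacle. The only point requiring attention is checking that the three pieces of the trace are disjoint up to null sets and that the index $t$ matches across the concatenation, both of which follow from the construction in Subsection~\ref{sec-pathQ}.
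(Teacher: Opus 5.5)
Your proposal is correct and takes the route the paper indicates. The paper obtains this statement from Proposition~\ref{kern-gamI-thm}, applied to the two copies of $\Gamma(I)$, and your Fubini--H\"older estimate on the middle rectangle $S$, with $\mu(S)<\lambda^{-1}|I|^2$ coming from $|J|<\lambda^{-1}|I|$, supplies the elementary step for the vertical segments that the paper leaves implicit.
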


\subsection{Integration along the path family \protect\boldmath {$\Gamma({Q_1,Q_2})$}}
Very detailed and explicit calculations will be required in the rest of the paper. Hence, from now on, we limit ourselves to the case where $\textswab{D}$ itself is a self-similar Cantor set. 

Fix $\nu \in \{2,3,\ldots\}$. In the rest of this paper, we take $\textswab{D}=\frac{1}{1-2\lambda^\nu}\textswab{C}({\lambda^\nu})$. This particular scaling guarantees that the largest $J$ in $\mathcal J$ has length equal to $1$, a condition needed for the path-connectivity of $\mathbf X$.

All the cubes in $\mathbf{X}$ will become honest squares. Indeed, we have from Remark~\ref{minit1} that
\begin{equation}
    \label{divides-nu}
\forall \, J \in \mathcal{J} \; \exists \, k \in \mbbn \cup \{0\}: |J| = \lambda^{k\nu}.
\end{equation}
So, every cube $Q=I\times J$ in the construction of $\mathbf X$ will satisfy $|I|=|J|=\lambda^{k\nu}$ for some $k \in \{0,1,\ldots\}$. Observe also that $\dim \textswab{D} = \frac{1}{\nu}\dim \textswab{C}$. Recall also (Lemma~\ref{prop3.16}) that $\mathcal J$ is both dense and separated.

Suppose that $Q_2=I_2\times J_2$ is the first large cube below $Q_1=I_1\times J_1$ as defined in Subsection~\ref{secjo1}; see also Figure~\ref{path-Q1-Q2}. For every  $ Q=I\times J$ in $\mathcal{Q}(Q_1,Q_2)$, let $j$ be the integer determined by $|J|=|J_1|\lambda^j$ (equivalently, by $|I|=|I_1|\lambda^j$), and set
$$
K_p(Q_1,Q_2):=\Bigl(\sum_{Q \in \mathcal{Q}(Q_1,Q_2)} 2^{{\frac{-pj}{p-1}}}|I|^{\frac{p-2}{p-1}}\Bigr)^{\frac{p-1}{p}}.
$$
The usefulness of this constant will be apparent momentarily. But first let us prove an upper bound on 
$K_p(Q_1,Q_2)$.

For fixed $J \in \mathcal{J}(Q_1,Q_2)$, there are exactly $2^j$-many $Q$ that belong to $\mathcal{Q}(Q_1,Q_2)$ and lie on row $J$. Hence,
\begin{equation}
\label{thurpeq3}
K_p(Q_1,Q_2)^{\frac{p}{p-1}}= \sum_{J \in \mathcal{J}(Q_1,Q_2)} 2^{\frac{-j}{p-1}}|J|^{\frac{p-2}{p-1}}.
\end{equation}

Throughout this paper, we set
\begin{equation}\label{eq:p0-def}
        \textswab{p}_0:=\frac{1+\nu+2\nu\log_2\lambda}{1+\nu\log_2\lambda}.
\end{equation}
The constant $\textswab{p}_0$ will be the sharp threshold for the validity of a Poincar\'e inequality on $\mathbf X$. Remark~\ref{thresh} summarizes its asymptotic behavior.
\begin{lemma}
\label{kp-lim}
For every  $p>\textswab{p}_0$, there exists a constant 
$C>0$,
depending only on $p$, $\lambda$ and $\nu$, such that for every path family $\Gamma(Q_1,Q_2)$, with $Q_1=I_1\times J_1$,
    $$
    K_p(Q_1,Q_2) \leq C|I_1|^{1-\frac{2}{p}}.
    $$
\end{lemma}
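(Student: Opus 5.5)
The plan is to reduce $K_p(Q_1,Q_2)$, via \eqref{thurpeq3}, to a geometric series whose ratio is $<1$ exactly when $p>\textswab{p}_0$. Write $|J_1|=|I_1|=\lambda^{m\nu}$. By \eqref{divides-nu}, every $J\in\mathcal J(Q_1,Q_2)$ has $|J|=\lambda^{n\nu}$. The third condition defining ``first large cube below'' forces $|I|\le|I_1|$ for the cubes on such rows, and all cubes are squares, so $|J|\le |J_1|$. Hence $j=k\nu$ with $k\in\{0,1,2,\ldots\}$ and $|J|=|J_1|\lambda^{k\nu}$. Substituting into \eqref{thurpeq3} gives
\begin{equation*}
K_p(Q_1,Q_2)^{\frac{p}{p-1}}=|J_1|^{\frac{p-2}{p-1}}\sum_{k\ge 0} N_k\,\Bigl(2^{-1}\lambda^{p-2}\Bigr)^{\frac{k\nu}{p-1}},
\end{equation*}
where $N_k$ is the number of $J\in\mathcal J(Q_1,Q_2)$ with $|J|=|J_1|\lambda^{k\nu}$.

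The key geometric step is to show $N_k\le C2^k$, with $C$ depending only on $\lambda,\nu$. I would do this in two sub-steps.

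First, I bound the gap: $\inf J_1-\sup J_2\lesssim |J_1|$. Every $J$ lying strictly between $J_2$ and $J_1$ has $|J|\le|J_1|$. By $\tau$-density (Lemma~\ref{prop3.16}), applied at the scale $r=\lambda^{-\nu}|J_1|$ when $r\le 1$, there is a complementary interval of length $>|J_1|$ within distance $\tau r\lesssim |J_1|$ below $J_1$. Since $I_1\subset I_2$ with $|I_2|\ge|I_1|$, this forces $\sup J_2$ to lie within $C|J_1|$ of $\inf J_1$. The top scale $|J_1|=1$ is handled separately, using the fact that $\mathbf X$ is bounded.

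Second, I count by self-similarity of $\textswab{D}=\frac{1}{1-2\lambda^\nu}\textswab{C}(\lambda^\nu)$. An interval of length $\lesssim\lambda^{m\nu}$ meets only boundedly many construction intervals of generation $m$ (this uses $\rho$-separation). Inside each such construction interval there are exactly $2^{k}$ removed intervals of length comparable to $\lambda^{(m+k)\nu}$, by Remark~\ref{minit1}. Together these give $N_k\le C2^k$.

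It then remains to sum. The series $\sum_k 2^k(2^{-1}\lambda^{p-2})^{k\nu/(p-1)}$ converges iff
\begin{equation*}
(p-1)+\nu\bigl((p-2)\log_2\lambda-1\bigr)<0 ,
\end{equation*}
i.e.\ iff $p(1+\nu\log_2\lambda)<1+\nu+2\nu\log_2\lambda$. Since $\lambda<1/2$ and $\nu\ge2$, we have $1+\nu\log_2\lambda<0$, and dividing by it gives exactly $p>\textswab{p}_0$ by \eqref{eq:p0-def}. The sum is then a constant depending only on $p,\lambda,\nu$. Raising to the power $(p-1)/p$ gives $K_p\le C|J_1|^{\frac{p-2}{p}}=C|I_1|^{1-\frac2p}$.

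I expect the main obstacle to be the counting bound $N_k\lesssim 2^k$, and in particular the gap estimate. One must check carefully that $J_2$ cannot be far below $J_1$, i.e.\ that some interval larger than $|J_1|$ really appears within distance $O(|J_1|)$, and handle the edge cases of the largest rows. Once that is in place, the rest is the explicit geometric-series computation above.
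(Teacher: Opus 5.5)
Your proposal is correct and is essentially the paper's proof. You rewrite \eqref{thurpeq3} as $|I_1|^{\frac{p-2}{p-1}}\sum_k N_k\bigl(2^{-\frac{1}{p-1}}\lambda^{\frac{p-2}{p-1}}\bigr)^{k\nu}$, bound $N_k$ by a multiple of $2^k$, and note that the geometric series converges exactly when $p>\textswab{p}_0$, which is what the paper does.

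The only difference is how the count is obtained. The paper reads it directly off the construction: $(\sup J_2,\inf J_1)$ is precisely the lower child of the construction interval of $\textswab{D}$ whose middle gap is $J_1$, so $N_k=2^{k-1}$. This makes your density-based gap estimate unnecessary. If you keep that estimate, note that $\tau$-density does not say on which side of $y$ the large interval lies. So you should apply it at a point roughly $2\tau r$ below $\inf J_1$, not at $\inf J_1$ itself.
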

\begin{remark}
\label{height-vs-J1}
    Notice that the bound depends on $|I_1|$, and apparently and strangely not on the ``height'' of $\Gamma(Q_1,Q_2)$. However, from the density of $\mathcal J$, the two quantities are comparable with a constant that again depends only on $\lambda$ and $\nu$.
\end{remark}
\begin{proof}[Proof of Lemma~\ref{kp-lim}]
From \eqref{thurpeq3}, and using $|J|=|I_1|\lambda^j$, 
    \begin{equation*}
\label{thurpeq2}
K_p(Q_1,Q_2)^{\frac{p}{p-1}} = \sum_{J \in \mathcal{J}(Q_1,Q_2)} |I_1|^{\frac{p-2}{p-1}}(2^{\frac{-1}{p-1}}\lambda^{\frac{p-2}{p-1}})^j.
\end{equation*}
It follows from \eqref{divides-nu} that $j=k\nu$ for some $k\in \mathbb N$. On the other hand, from the construction of $\textswab{D}$, for any $k$, {there are fewer than $2^k$-many $J \in \mathcal{J}(Q_1,Q_2)$ with $|J|=|I_1|\lambda^{k\nu}$}. Therefore, we obtain
\begin{equation}
\label{fridhap1}
K_p(Q_1,Q_2)^{\frac{p}{p-1}} \leq |I_1|^{\frac{p-2}{p-1}}\sum_{k=1}^\infty  2^k \Bigl(2^{\frac{-1}{p-1}}\lambda^{\frac{p-2}{p-1}}\Bigr)^{k\nu}.
\end{equation}
The latter sum converges if
$$
2\Big(2^{\frac{-1}{p-1}}\lambda^{\frac{p-2}{p-1}}\Big)^{\nu} <1, $$
which turns out to be equivalent to $p> \textswab{p}_0$. 
So, fix $p> \textswab{p}_0$ and let $C$ be the constant given by
$$
C^\frac{p}{p-1}:=\sum_{k=1}^\infty  2^k \Big(2^{\frac{-1}{p-1}}\lambda^{\frac{p-2}{p-1}}\Big)^{k\nu},
$$
which depends only on $p$, $\lambda$ and $\nu$. Now raising both sides of \eqref{fridhap1} to the power $\frac{p-1}{p}$ completes the proof.
\end{proof}
Since $\textswab{p}_0>2-{\rm dim} \ \textswab{C}$, the integral estimates of the previous sections for $\Gamma(I)$ and $\Gamma(Q)$ are valid for all $p>\textswab{p}_0$.
\begin{proposition}\label{kern-eta-chim}
Let $p > \textswab{p}_0$. Suppose $Q_2$ is the first large cube below/above $Q_1=I_1\times J_1$. Let $E$ be the union of all $Q\in \mathcal{Q}(Q_1,Q_2)$. Then there is a constant  
$C>0$,
depending only on $p$, $\lambda$ and $\nu$ such that for every measurable function  $g:E\to[0,\infty]$,
     \begin{equation*}
       \intavg_{I_1} \int_{\eta_t} g \chi_E  \, ds dt \le C|I_1|^{1-\frac{2}{p}}\Bigl(\int_{E} g^p d\mu\Bigr)^{1/p}.
    \end{equation*}
Here, $\eta_t, t \in I_1$ are the paths in $\Gamma(Q_1,Q_2)$.
\end{proposition}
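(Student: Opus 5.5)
We decompose $E$ cube by cube. Since $\bigcup\mathcal J(Q_1,Q_2)$ has full measure in $[\sup J_2,\inf J_1]$ and $\textswab{C}\times\textswab{D}$ has zero area, we get
\[
\intavg_{I_1}\int_{\eta_t} g\chi_E\,ds\,dt=\sum_{Q\in\mathcal Q(Q_1,Q_2)}\intavg_{I_1}\int_{\eta_t} g\chi_Q\,ds\,dt .
\]
Indeed, each $\eta_t$ is Lipschitz and parameterized over $[\sup J_2,\inf J_1]$. The portion of its parameter interval lying in $\textswab{D}$ has zero length, so it contributes nothing to the line integral, and $g\chi_E$ is supported on the cubes.

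Fix $Q=I\times J\in\mathcal Q(Q_1,Q_2)$ with $|I|=|I_1|\lambda^j$. By construction, $\eta_t|_J=\gamma_{t_J}\in\Gamma(Q)$ when $x_t\in I$, and $\eta_t$ does not enter $Q$ otherwise. By Lemma~\ref{gammacopy}, the map $t\mapsto t_J$ sends $\{t\in I_1: x_t\in I\}$ affinely onto $I$. That set is a subinterval of $I_1$ of length $2^{-j}|I_1|$: it consists of the paths of $\Gamma(I_1)$ landing in a level-$j$ surviving interval, and each split halves the parameter. Changing variables therefore gives
\[
\intavg_{I_1}\int_{\eta_t} g\chi_Q\,ds\,dt = 2^{-j}\intavg_I\int_{\gamma_s} g\,ds\,ds',
\]
with the inner family $\Gamma(Q)$. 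Applying Proposition~\ref{kern-gam-Q}, which is valid because $p>\textswab{p}_0>2-\dim\textswab{C}$, we bound the $Q$-term by
\[
C\,2^{-j}|I|^{1-\frac2p}\Bigl(\int_Q g^p\,d\mu\Bigr)^{1/p}.
\]

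Next we sum over $Q$ and apply the discrete H\"older inequality with exponents $\frac{p}{p-1}$ and $p$:
\[
\sum_Q 2^{-j}|I|^{1-\frac2p}\Bigl(\int_Q g^p\Bigr)^{1/p}\le\Bigl(\sum_Q 2^{-\frac{pj}{p-1}}|I|^{\frac{p-2}{p-1}}\Bigr)^{\frac{p-1}{p}}\Bigl(\sum_Q\int_Q g^p\Bigr)^{1/p}.
\]
The exponent on $|I|$ is correct, since $(1-\frac2p)\frac{p}{p-1}=\frac{p-2}{p-1}$. The first factor is exactly $K_p(Q_1,Q_2)$, and since the cubes are pairwise disjoint the second factor equals $(\int_E g^p\,d\mu)^{1/p}$. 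Lemma~\ref{kp-lim} then gives $K_p(Q_1,Q_2)\le C|I_1|^{1-2/p}$, which concludes the proof.

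The only delicate point is the bookkeeping in the second step. One must verify that the parameter set in $I_1$ whose paths meet $Q$ has measure exactly $2^{-j}|I_1|$, and that the reparameterization onto $\Gamma(Q)$ is affine. Both follow from the self-similar splitting, which halves the parameter interval at each level, together with Lemma~\ref{gammacopy} applied to $\Gamma(I_1)$ and $\Gamma(I)$ and the fact that each path of $\Gamma(I)$ is determined by its Cantor endpoint. The same argument applies verbatim when $Q_2$ is the first large cube above $Q_1$.
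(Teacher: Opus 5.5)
Your proof is correct and matches the paper's argument step for step: the cube-by-cube decomposition, the change of variables giving the factor $2^{-j}$ onto $\Gamma(Q)$, Proposition~\ref{kern-gam-Q}, discrete H\"older producing $K_p(Q_1,Q_2)$, and then Lemma~\ref{kp-lim}. Your opening justification via $\textswab{D}$ having zero length is not needed. The splitting already holds because $\chi_E=\sum_Q\chi_Q$ pointwise, since the cubes are pairwise disjoint.
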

\begin{proof}
 For every $Q=I\times J \in \mathcal{Q}(Q_1,Q_2)$, by \eqref{divides-nu}, there exists a $j\in\mathbb N$ such that $|I|=|I_1|\lambda^{j}$. The set of values $t \in I_1$ for which $\eta_t$ enters $Q$ has Lebesgue measure $2^{-j}|I_1|$. For each such $t$, the subpath of $\eta_t$ that lies in $Q$, coincides with $\gamma_{t'} \in \Gamma(Q)$ for a unique $t' \in I$, determined by a canonical bijection (see Lemma~\ref{gammacopy}).

By Proposition~\ref{kern-gam-Q}, we have
\begin{equation*}
    \intavg_{I_1}\int_{\eta_t} g\chi_Q\, dsdt = 2^{-j}\intavg_{I}\int_{\gamma_{t'}} g\chi_Q\, dsdt' \lesssim 2^{-j}|I|^{1-\frac{2}{p}}\Bigl(\int_Q g^p\, d\mu\Bigr)^{\frac{1}{p}}.
\end{equation*}
Here the first equality is obtained by a simple $1$-dimensional change of variables under the mentioned canonical bijection.

Summing the last inequalities over all $Q \in \mathcal{Q}({Q_1,Q_2})$ and applying H\"older's inequality to the discrete sum yields
\begin{align*}
\intavg_{I_1}\int_{\eta_t} g\, dsdt &= \sum_{Q \in \mathcal{Q}({Q_1,Q_2})} \intavg_{I_1}\int_{\eta_t} g\chi_Q\, dsdt \\
& \lesssim \sum_{Q \in \mathcal{Q}({Q_1,Q_2})} 2^{-j}|I|^{1-\frac{2}{p}}\Bigl(\int_Q g^p\, d\mu\Bigr)^{\frac{1}{p}} \\
&\lesssim\Bigl(\sum_{Q \in \mathcal{Q}({Q_1,Q_2})} 2^{\frac{-jp}{p-1}}|I|^{\frac{p-2}{p-1}} \Bigr)^{\frac{p-1}{p}} \Bigl(\sum_{Q \in \mathcal{Q}({Q_1,Q_2})} \int_Q g^p\, d\mu\Bigr)^{\frac{1}{p}} \\
& \lesssim K_p(Q_1,Q_2) \Bigl(\int_E g^p\, d\mu\Bigr)^{\frac{1}{p}} \\
& \lesssim |I_1|^{1-\frac{2}{p}}\Bigl(\int_E g^p\, d\mu\Bigr)^{\frac{1}{p}},
\end{align*}
where in the last step we used Lemma \ref{kp-lim}.
\end{proof}
By applying the preceding proposition to admissible functions in the calculation of $p$-modulus, we obtain:
\begin{corollary}
    \label{pos-mod}
Let $p > \textswab{p}_0$. Suppose $Q_2$ is the first large cube below $Q_1=I_1\times J_1$. Then the $p$-modulus of the path family $\Gamma(Q_1,Q_2)$ is bounded from below by $C|I_1|^{2-p}$, where $C>0$ depends only  on $p$ and $\lambda$ and $\nu$. In particular, it is positive.
\end{corollary}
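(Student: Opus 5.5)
The plan is the standard duality between modulus and averages over a pencil of curves. Let $\rho\colon \mathbf X \to [0,\infty]$ be any Borel function admissible for $\Gamma(Q_1,Q_2)$, i.e.\ $\int_{\eta_t}\rho\, ds \ge 1$ for every $\eta_t \in \Gamma(Q_1,Q_2)$ (up to the countable exceptional set of $t$, which has measure zero). We want to show $\int_{\mathbf X} \rho^p\, d\mu \ge C|I_1|^{2-p}$; taking the infimum over admissible $\rho$ then gives the claim.

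The key steps are as follows.

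\emph{Step 1: restrict to $E$.} Let $E$ be the union of all $Q \in \mathcal Q(Q_1,Q_2)$. Each $\eta_t$ is defined on $[\sup J_2,\inf J_1]$. On $\bigcup \mathcal J(Q_1,Q_2)$ it runs inside cubes of $E$. On the complementary set it takes values on vertical segments $\{x_t\}\times \textswab{D}$. That set has one-dimensional measure zero, since $\textswab{D}$ is Lebesgue-null as a Cantor set of dimension less than one. Because $\eta_t$ is Lipschitz, the portion of $\eta_t$ parametrized over this null set contributes zero length. Hence
$$
\int_{\eta_t}\rho\, ds = \int_{\eta_t}\rho\chi_E\, ds .
$$
I expect this step to be the main point to check, since the arc-length integral must be compared with the parametrization. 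The comparison holds because the length measure of $\eta_t$ is bounded by the Lipschitz constant times the pushforward of Lebesgue measure on the parameter interval.

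\emph{Step 2: average over $t$.} Admissibility gives
$$
1 \le \intavg_{I_1}\int_{\eta_t}\rho\chi_E\, ds\, dt .
$$

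\emph{Step 3: apply Proposition~\ref{kern-eta-chim}.} Applying it with $g=\rho|_E$, the right-hand side is at most
$$
C|I_1|^{1-\frac2p}\Bigl(\int_E \rho^p\, d\mu\Bigr)^{1/p},
$$
with $C=C(p,\lambda,\nu)$.

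\emph{Step 4: rearrange.} Raising to the power $p$ gives
$$
\int_{\mathbf X}\rho^p\, d\mu \;\ge\; \int_E\rho^p\, d\mu \;\ge\; C^{-p}|I_1|^{2-p}.
$$

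Since $\rho$ was an arbitrary admissible function, $\mod_p\Gamma(Q_1,Q_2) \ge C^{-p}|I_1|^{2-p} > 0$. The constant depends only on $p$, $\lambda$ and $\nu$. The same argument works verbatim for the ``above'' case.
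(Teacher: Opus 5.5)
Your proposal is correct and is essentially the paper's argument. Like the paper, you average the admissibility inequality over $t\in I_1$, invoke Proposition~\ref{kern-eta-chim}, and rearrange to get $\int_E \rho^p\,d\mu \gtrsim |I_1|^{2-p}$. Your Step~1 shows that the part of each $\eta_t$ parametrized over the Lebesgue-null Cantor set carries no length, so that $\int_{\eta_t}\rho\,ds=\int_{\eta_t}\rho\chi_E\,ds$; this only makes explicit a point the paper uses tacitly, here and in the first equality of that proposition's proof.
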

\begin{proof}
Indeed, for every admissible function $g$ for $\Gamma(Q_1,Q_2)$, by definitions we have
    \begin{equation*}
       1 \le \intavg_{I_1} \int_{\eta_t} g \, ds dt.
    \end{equation*}
Hence, by Proposition~\ref{kern-eta-chim},
$$
1 \lesssim |I_1|^{1-\frac{2}{p}}\Bigl(\int_E g^p\, d\mu\Bigr)^{\frac{1}{p}},
$$
which then yields
$$
|I_1|^{2-p} \lesssim \int_E g^p\, d\mu.
$$
\end{proof}

\section{Poincar\'e Inequality on $\mathbf{X}$}\label{secpiyes}
The main motivation for the construction of the path families in the previous sections is the following main result of this paper.
\begin{theorem}
\label{main1}
Fix $\lambda \in (0,1/2)$, $\nu \in \{2,3,\ldots\}$, and let $\mathbf X$ be the space that corresponds to the choices of $\textswab{C}=\textswab{C}(\lambda)$ and $\textswab{D}=\frac{1}{1-2\lambda^\nu}\textswab{C}(\lambda^\nu)$. Then $\mathbf X$ supports a $p$-Poincar\' e inequality if and only if $p>\textswab{p}_0$, where
$$
\textswab{p}_0=\frac{1+\nu+2\nu\log_2\lambda}{1+\nu\log_2\lambda}.
$$
\end{theorem}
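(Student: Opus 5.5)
The proof has two directions: sufficiency of $p>\textswab{p}_0$, and necessity.

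\emph{Sufficiency.} Fix $p>\textswab{p}_0$. I will verify the pointwise hypothesis of Lemma~\ref{lem:pointwise-then-PI} for a.e.\ pair $x,y\in\mathbf X$; since $\mu$ is Ahlfors regular (Proposition~\ref{cin9}), this yields the $p$-Poincar\'e inequality. It suffices to take $x,y\in\mathbf X'$, since $\mu(\textswab{C}\times\textswab{D})=0$. Let $x\in Q_x$ and $y\in Q_y$, and put $r=d(x,y)$.

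The plan is to build a probability family of paths from $x$ to $y$, as in the bow-tie example, by concatenating three kinds of pieces.
\begin{enumerate}
\item Cone pieces inside $Q_x$ and $Q_y$ (or inside a cube of size $\approx r$ containing the point). They join $x$ to a horizontal segment at a Cantor edge. Corollary~\ref{cor:coare-max-2} bounds these by $r\,\mathcal M_{Cr}g(x)$ and $r\,\mathcal M_{Cr}g(y)$, and hence by the $(\mathcal M g^p)^{1/p}$ terms via H\"older.
\item Families $\Gamma(Q)$ and $\Gamma(Q_1,Q_2)$. These move vertically from $Q_x$ through successive first large cubes above or below. The cubes increase geometrically in size until reaching a common cube $Q^*$ of side $\approx r$ whose base interval contains the bases of both. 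Density and separation (Lemma~\ref{prop3.16}) guarantee that such a $Q^*$ exists within distance $\lesssim r$.
\item A horizontal cone family inside $Q^*$, joining the two relevant subintervals of its Cantor edge.
\end{enumerate}
The indexing parameters are matched through the canonical bijections (Lemma~\ref{gammacopy} and Remark~\ref{rem:free-par}), so the families can be concatenated while keeping a uniform probability measure.

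Each vertical stage $\Gamma(Q_k,Q_{k+1})$ with $|I_k|\approx\lambda^{k}r$ is estimated by Proposition~\ref{kern-eta-chim}, with reindexing handled by averaging. This gives the bound
\[
|I_k|^{1-2/p}\Bigl(\int_{E_k}g^p\Bigr)^{1/p}\lesssim |I_k|\Bigl(\intavg_{B(x,C|I_k|)}g^p\Bigr)^{1/p}\le |I_k|\,\bigl(\mathcal M_{Cr}g^p(x)\bigr)^{1/p}.
\]
The same estimate applies to $\Gamma(Q_k)$ by Proposition~\ref{kern-gam-Q}. Summing the geometric series in $|I_k|$ gives $\lesssim r(\mathcal M_{Cr}g^p(x))^{1/p}$. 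The top and horizontal pieces in $Q^*$ are of size $r$ and lie in $B(x,Cr)$, so they are controlled the same way.

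\emph{Main obstacle.} I expect the combinatorial step to be the hardest: constructing the chain $Q_x\to\cdots\to Q^*\leftarrow\cdots\leftarrow Q_y$ with comparable sizes and within distance $\lesssim r$. The delicate cases are $x$ near $\textswab{C}\times\textswab{D}$ and two points in tiny cubes that are close but lie over different Cantor branches. For these I will use $\rho$-separation and $\tau$-density to bound the vertical travel at each stage by a constant times the current cube size.

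\emph{Necessity.} Suppose a $p$-Poincar\'e inequality held for some $p\le\textswab{p}_0$. Since Poincar\'e inequalities self-improve in complete doubling spaces (Keith--Zhong), there would then be one for some $p'<\textswab{p}_0$. It therefore suffices to exclude every $p<\textswab{p}_0$ by a test function.

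Take a unit-scale configuration: $Q_1$ a top cube and $Q_2$ the first large cube below it. Choose a Lipschitz function $u$ equal to $0$ near $Q_1$ and $1$ near $Q_2$. Its upper gradient is $g=\sum_J c_J\chi_{\text{row }J}\cdot|J|^{-1}\cdot(\text{suitable weight})$, concentrated near the Cantor edges of the rows between $Q_1$ and $Q_2$.

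The weights are chosen as the extremals in the H\"older step of Lemma~\ref{kp-lim}. In each cube, $g$ is the $\Gamma(I)$ coarea-density extremal, and across rows the weights are proportional to $(2^{-j}\lambda^{j(\ldots)})^{1/(p-1)}$. The aim is to make $\int g^p$ comparable to $K_p^{-p/(p-1)}$-type quantities, which can be made arbitrarily small when the series diverges. Truncating at depth $N$ then gives $\int g^p\to0$ while $\intavg|u-u_B|\gtrsim1$, a contradiction.

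The point requiring care is that $g$ must be an honest upper gradient for all curves, not just those in $\Gamma(Q_1,Q_2)$. For this I will argue that every curve from $Q_1$ to $Q_2$ staying in the relevant region crosses each row through some cube, and passes through the Cantor edge of each cube it uses. Accordingly, I will define $u$ row-by-row as a function of height with increments $\propto c_J$, and reduce $g$ on each cube to the one-dimensional problem solved by the $\Gamma(I)$ computation, namely the divergence of $\sum(2\lambda)^{-ip/(p-1)}\mu(E_i)$ generalized.
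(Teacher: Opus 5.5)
Your sufficiency argument follows the paper's route: pointwise inequalities from a concatenated pencil of curves, fed into Lemma~\ref{lem:pointwise-then-PI}. There is one omission, noted at the end. Your necessity argument, however, rests on the wrong mechanism as written.

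\textbf{Necessity.} Once you take $u=\psi(y)$ to be a function of height, the vertical segments in each cube force every $L^p$ upper gradient to satisfy $g\ge|\psi'(y)|$ a.e.\ there. So $g$ cannot be a ``$\Gamma(I)$ coarea-density extremal concentrated near the Cantor edges''. Also, the series $\sum_i(2\lambda)^{-ip/(p-1)}\mu(E_i)$ you propose to reduce to converges for every $p>2-\dim\textswab{C}$. That covers the whole range $2-\dim\textswab{C}<p<\textswab{p}_0$ where the contradiction is needed, since $\textswab{p}_0>2-\dim\textswab{C}$; the pinching along the Cantor edges is harmless there. What diverges is the row sum \eqref{thurpeq3}: the $\approx 2^k$ rows of length $|I_1|\lambda^{k\nu}$ compete against the $\approx 2^{k\nu}$ cubes each carries. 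No argument about curves crossing Cantor edges is needed.

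The repair is to take $\psi$ Lipschitz with $\psi'=c_J/|J|$ on each (truncated) row $J$ and $\psi'=0$ elsewhere. Then $g=|\psi'(y)|$ is an upper gradient for every rectifiable curve by the one-dimensional chain rule, and
\[
\int_{\mathbf X} g^p=\sum_J c_J^p\,|J|^{2-p}\,n_J ,
\]
where $n_J\approx 2^j$ is the number of cubes on row $J$. Minimizing under $\sum_J c_J=1$ gives $\bigl(\sum_J n_J^{-1/(p-1)}|J|^{(p-2)/(p-1)}\bigr)^{-(p-1)}\approx K_p^{-p}$ (not $K_p^{-p/(p-1)}$). This tends to $0$ under truncation when $p<\textswab{p}_0$.

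The paper uses the single-scale special case: only the $N_k\approx 2^k$ rows of length $\lambda^{k\nu}$ in $[\inf J_0/2,\inf J_0]$, with equal increments $1/N_k$. That is one group of your sum, and it suffices because the groups grow geometrically. At $q=\textswab{p}_0$, though, it only gives $\int g^q\lesssim 1$, hence the paper's appeal to Keith--Zhong. Your optimized weights would cover the endpoint directly. At $p=\textswab{p}_0$ every depth contributes equally to \eqref{thurpeq3}, so the truncated sum grows linearly and $K_p^{-p}\to 0$.

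\textbf{Sufficiency.} $\Gamma(Q_k)$ and the bijections of Lemma~\ref{gammacopy} preserve the Cantor abscissa. The paths of $\Gamma(Q_{k-1},Q_k)$ reach $Q_k$ only at points of $\textswab{C}\cap I_{k-1}$. These tools therefore cannot carry those paths onto all of $\textswab{C}\cap I_k$ with the uniform index measure that Proposition~\ref{kern-eta-chim} needs at the next stage. ``Reindexing by averaging'' does not help, because every path must physically join $x$ to $y$.

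Without spreading, you only use the subfamily of $\Gamma(Q_k,Q_{k+1})$ through $\textswab{C}\cap I_{k-1}$. Its index measure is $2^{-j}$, where $|I_{k-1}|=\lambda^{j}|I_k|$, so the averaged bound picks up a factor $2^{j}$. Along the chain this accumulates to $(|I_k|/|I_0|)^{\dim\textswab{C}}$, which is unbounded.

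The paper inserts three pieces in each intermediate cube $Q_k$: an upside-down copy of $\Gamma(I_{k-1})$, a trapezoidal family $\Delta_k$ from a segment of length $|I_{k-1}|$ to one of length $|I_k|$, and a copy of $\Gamma(I_k)$. The family $\Delta_k$ is controlled by Corollary~\ref{cor:coare-max-2}, because $\dist(x,T_k)\approx|I_{k-1}|$ by \eqref{sum-Ii}. The copies of $\Gamma(I)$ are controlled by Proposition~\ref{kern-gamI-thm}.
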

Theorem~\ref{main1} is proved in Sections~\ref{sec:yes-PI} and \ref{secpino}.
\begin{remark}
\label{thresh}
    The threshold $\textswab{p}_0$ satisfies $\textswab{p}_0 > 2-\dim \textswab{C}$ and has the following asymptotic behavior:
    \begin{itemize}
    \item $\textswab{p}_0\to 2-\dim \textswab{C} $ if $\nu \to \infty$, uniformly in $\lambda$,
    \item $\textswab{p}_0\to 1$ if $\lambda \to 1/2$, and
    \item $\textswab{p}_0\to 2$ if $\lambda \to 0$.
\end{itemize}
\end{remark}
This asymptotic behavior is in line with the expectation that we should get a better Poincar\'e inequality (i.e.~smaller $\textswab{p}_0$) if $\textswab{C}$ has a larger Hausdorff dimension or $\textswab{D}$ has a smaller Hausdorff dimension, see Figure \ref{fig:p0-graphs}.

\begin{figure}[h]
    \centering
\begin{tikzpicture}[scale=0.8]
\pgfmathsetmacro\vvalue{3.6}
\pgfmathsetmacro\Vlarge{2000}
\begin{axis}[
    at={(4.5cm,0)}, anchor=south west,
    width=10cm, height=6cm,
    domain=0.01:0.5, samples=200,
    xlabel={$\lambda$}, ylabel={$\textswab{p}_0$},
    xmin=0, xmax=0.5,   
    ymin=0, ymax=2.1,   
    legend pos=south west,
    every axis plot post/.style={thick},
    grid=both,
    major grid style={gray!30},
    minor grid style={gray!10},
  ]
\addplot[
  thick,
  solid,
  mark=square,            
  mark options={draw=black,fill=white}, 
  mark size=1pt,          
  mark repeat=25
]
{(1 + 2 + 4*log2(x)) / (1 + 2*log2(x))};
	   \addlegendentry{$\nu=2$}
    \addplot[black] 
      {(1 + \vvalue + 2*\vvalue*log2(x)) /
       (1 + \vvalue*log2(x))};
	 \addlegendentry{generic $\nu$}

    \addplot[blue,dashed] 
      {(1 + \Vlarge + 2*\Vlarge*log2(x)) /
       (1 + \Vlarge*log2(x))};
	   \addlegendentry{$\nu=\infty$}

    \addplot[red,   mark=*, only marks, mark size=1pt, forget plot] coordinates {(0,2)};
    \addplot[black, mark=*, only marks, mark size=1pt, forget plot] coordinates {(0,2)};
    \addplot[blue,  mark=*, only marks, mark size=1pt, forget plot] coordinates {(0,2)};
  \end{axis}
\end{tikzpicture}
   \caption{The asymptotic behavior of the threshold $\textswab{p}_0$.}
    \label{fig:p0-graphs}
\end{figure}

The fact that we can choose $\textswab{p}_0$,  $2-\dim (\textswab{C}\times \textswab{D})$, and $2-\dim \textswab{C} $ arbitrarily close to each other, by choosing $\nu$ large, will become important in the proof of Theorem \ref{main2}.

We will prove Poincar\'e inequality by establishing the pointwise inequalities of Lemma~\ref{lem:pointwise-then-PI}. In order to establish the pointwise estimates we need to find a thick family of paths between arbitrary pairs of points. To do so, we will concatenate a finite number of path families of the types $\Gamma(I), \Gamma(Q)$, and $\Gamma(Q_1,Q_2)$ from the previous sections. The key to the success of this approach is that the path families $\Gamma(Q_1,Q_2)$ contain all the small scale analysis, thus reducing the problem to the combinatorics of the finitely many large scale cubes. The integral estimates from Section~\ref{sec:polar-int} will be crucial.

Throughout this section, we will assume $2 \ge p>\textswab{p}_0$.

\subsection{The cone \protect\boldmath $\Gamma(Q_0;Q_m)$}
Let us begin with a few easy properties of the collection of intervals $\mathcal J$. Recall that for every $Q=I\times J$, $|I|=|J|$.
\begin{remark}\label{rem:properties-J}
By the self-similarity of $\textswab{D}=\frac{1}{1-2\lambda^\nu}\textswab{C}(\lambda^\nu)$, the following hold:
\begin{enumerate}
    \item By~\eqref{divides-nu}, for every $J \in \mathcal J$, $|J|=\lambda^{k\nu}$ for some $k\in \{0,1,\ldots\}$. \vspace{0.2cm}
    \item Between any distinct pair $J_1$ and $J_2$ with $|J_1|=|J_2|$ there exists a $J' \in \mathcal J$ such that $|J'| > |J_1|$ (due to Remark~\ref{minit1} and $\lambda^{-\nu}>4$). \vspace{0.2cm}
    \item If $Q_2=I_2\times J_2$ is the first large cube below (/above) $Q_1=I_1\times J_1$, then $ |I_1|< |I_2|$. Indeed, by item~(1), $ |I_1| \le \lambda^{\nu}|I_2|$.  \vspace{0.2cm}
    \item Under the assumptions in item~(3), $\dist(J_1,J_2) \approx |J_1|$, with constants that depend only on $\lambda$ and $\nu$. This follows from the density and separatedness of $\mathcal J$ as in Definition~\ref{uniform-separattion} and Definition~\ref{def:density}.
\end{enumerate}    
\end{remark}

We now construct path families that connect cubes of different sizes that are vertically aligned (Figure~\ref{cone-Qx}).\footnote{Here, and later $M$ does not stand for an index. Rather $Q_M$ is thought of as ``the middle'' cube.}
\begin{lemma}
    \label{pnt-2}
Suppose that cubes $Q_0=I_0 \times J_0$ and $Q_M=I' \times J'$ satisfy $\sup J' < \inf J_0$, $I_0 \subset I'$ and for every $J$ between $J_0$ and $J'$ we have $|J|<|J'|$. Then, there exists $m \ge 1$ and a sequence of cubes $Q_1,\ldots, Q_{m-1}, Q_m$, such that $Q_m=Q_M$, and for each $1\leq i\leq m$, cube $Q_i$ is the first large cube below $Q_{i-1}$. Moreover, for every $1\le i\leq m$, we have 
\begin{equation}\label{sum-Ii}
    \dist (Q_0,Q_i)  \approx \sum_{j=0}^{i-1} |I_j| \approx |I_{i-1}|,
\end{equation}
where the comparison constants depend only on $\lambda$ and $\nu$.
\end{lemma}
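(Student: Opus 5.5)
We build the sequence greedily. Given $Q_{i-1}=I_{i-1}\times J_{i-1}$ with $\sup J' <\inf J_{i-1}$ and $I_{i-1}\subset I'$, we look at the rows $J\in\mathcal J$ lying in the open gap $(\sup J',\inf J_{i-1})$ with $|J|>|I_{i-1}|$. If there are none, then $Q_M$ itself is the first large cube below $Q_{i-1}$ and we stop with $m=i$. Otherwise, we take among them the one closest to $J_{i-1}$ (the highest one), call it $J_i$. We let $Q_i=I_i\times J_i$ be the unique cube on row $J_i$ with $I_{i-1}\subset I_i$; it exists and is unique by items (3)–(4) of Remark~\ref{rem-july-1}.

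We then check that $Q_i$ is the first large cube below $Q_{i-1}$. The first two conditions hold by construction. For the third, every row strictly between $J_i$ and $J_{i-1}$ has length $\le |I_{i-1}|$ by the choice of $J_i$, so any cube on it has $|I|=|J|\le|I_{i-1}|$. Moreover, $I_i\subset I'$, since $|J_i|<|J'|$ by hypothesis and the intervals in $\mathcal I$ are nested (Remark~\ref{rem-july-1}(3)). Hence the hypotheses of the lemma persist with $Q_i$ in place of $Q_0$.

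Termination follows from Remark~\ref{rem:properties-J}(3): $|I_i|\ge\lambda^{-\nu}|I_{i-1}|$. Since the sizes are bounded by $|I'|$, the process stops after finitely many steps, and at the last step $Q_m=Q_M$. This also gives the geometric growth $|I_{j}|\le \lambda^{\nu(i-1-j)}|I_{i-1}|$, so that $\sum_{j=0}^{i-1}|I_j|\approx|I_{i-1}|$ with constants depending only on $\lambda,\nu$.

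For the distance estimate, we write $\dist(Q_0,Q_i)$ and compare it with vertical gaps. The cubes are vertically stacked over nested intervals, so the Euclidean distance from $Q_0$ to $Q_i$ equals the vertical gap $\inf J_0-\sup J_i$, up to the bi-Lipschitz constant of Proposition~\ref{prop:bilip-eqiv} if $d$ is meant. This gap is the sum of the gaps $\dist(J_{j},J_{j+1})$ plus the heights $|J_j|$ for $1\le j\le i-1$. By Remark~\ref{rem:properties-J}(4), each gap $\dist(J_j,J_{j+1})\approx|J_j|=|I_j|$, so the total is $\approx\sum_{j=0}^{i-1}|I_j|$. The lower bound follows from the single term $\dist(J_{i-1},J_i)\gtrsim|I_{i-1}|$.

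The main obstacle I anticipate is verifying Remark~\ref{rem:properties-J}(4) honestly in this greedy setting. The upper bound $\dist(J_{i-1},J_i)\lesssim|J_{i-1}|$ needs $\tau$-density: a row of length $>|J_{i-1}|$ must exist within distance $\approx|J_{i-1}|$ below $J_{i-1}$. If that row falls below $J'$, it has to be checked that then $J'$ itself is that close, using that $J'$ is larger than everything in between. Self-similarity of $\textswab{D}$ should give this directly: the nearest larger removed interval below $J_{i-1}$ lies within $C(\lambda,\nu)|J_{i-1}|$. The lower bound comes from $\rho$-separatedness.
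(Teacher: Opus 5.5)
Your proof is correct and follows the paper's argument. The paper leaves implicit the greedy choice of $J_i$ (the nearest row below $J_{i-1}$ that is longer than $J_{i-1}$) and then appeals to items (3) and (4) of Remark~\ref{rem:properties-J} exactly as you do. The concern in your last paragraph resolves as you suggest: because $J'$ lies below $J_{i-1}$ and is longer, the nearest longer row below $J_{i-1}$ exists and your greedy $J_i$ is exactly that row, and by the self-similarity of $\textswab{D}$ it lies at distance $\frac{\lambda^{\nu}}{1-2\lambda^{\nu}}|J_{i-1}|$ from $J_{i-1}$.
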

\begin{remark}
    Analogous statement holds when $Q_M$ is \emph{above} $Q_0$.
\end{remark}
\begin{proof}
In light of the properties listed in Remark~\ref{rem:properties-J}, the only slightly non-trivial claim is the second comparison in \eqref{sum-Ii}. But this follows again from Remark~\ref{rem:properties-J} which shows that $\sum_{j=0}^{i-1} |I_j|$ is dominated by a geometric sum with ratio $\lambda^{\nu}$ and the largest term being $|I_{i-1}|$. Thus, the two quantities are comparable.
\end{proof}

\begin{figure}
   \centering
\begin{tikzpicture}[scale=0.15]
  \draw[thick] (-2,118) -- ++(68,0);

  \fill[gray!30] (-2,110) rectangle ++(68,8);

    \foreach \k in {126} 
    \foreach \l in {0,48} \draw[line width=.2pt,fill=gray!30] (\l,\k) rectangle ++(16,16);

    \foreach \m in { 120, 144 } 
    \foreach \n in {0, 12, 48, 60} \draw[line width=.2pt,fill=gray!30] (\n,\m) rectangle ++(4,4);

    \foreach \m in {118.5, 124.5, 142.5, 148.5 } 
    \foreach \n in {0, 3, 12, 15, 48, 51, 60, 63} \draw[line width=.2pt,fill=gray!30] (\n,\m) rectangle ++(1,1);

  \foreach \x in {0,1,2,3,4,5,6,7,8,9,10,11,12,13,14,15,16}
  {
    \draw (0.5*\x,132) -- (0.25*\x,128);    
    \draw (16-0.5*\x,132) -- (16-0.25*\x,128);
}

\foreach \x in {0,0.5,1,1.5,2,2.5,3,3.5,4,4.5,5,5.5,6,6.5,7,7.5,8,8.5,9,9.5,10,10.5,11,11.5,12,12.5,13,13.5,14,14.5,15,15.5,16}
{
\draw[thin] (\x,132) -- (0.25*\x,140);
}

\foreach \x in {0,1,2,3,4,5,6,7,8}
  {
        \draw[very thin] (0.125*\x,148) -- (0.125*\x,147.5);

}

\foreach \x in {0,2,4,6,8}
 {
  \draw [thin](0.125*\x,124.5) -- (0.1255*\x,125.5);
        \draw [thin](4-0.125*\x,124.5) -- (4-0.1255*\x,125.5);
        \draw [thin](12+0.125*\x,124.5) -- (12+0.1255*\x,125.5);
        \draw [thin](15+0.125*\x,124.5) -- (15+0.1255*\x,125.5);

    \draw [thin](0.125*\x,118.5) -- (0.1255*\x,119.5);
        \draw [thin](16-0.125*\x,118.5) -- (16-0.1255*\x,119.5);
        \draw [thin](12+0.125*\x,118.5) -- (12+0.1255*\x,119.5);
        \draw [thin](3+0.125*\x,118.5) -- (3+0.1255*\x,119.5);

     \draw [thin](0.125*\x,142.5) -- (0.1255*\x,143.5);
        \draw [thin](3+0.125*\x,142.5) -- (3+0.1255*\x,143.5);

}

\foreach \x in {0,1,2,3,4,5,6,7,8}
{
 \draw [very thin](0.125*\x,148.5) -- (0.1255*\x,149);
}

\foreach \x in {0,1,2,3,4,5,6,7,8}
  {
    \draw[thin] (4-0.125*\x,120) -- (4-0.25*\x,122);
        \draw[thin] (0.125*\x,120) -- (0.25*\x,122);
        \draw[thin] (0.25*\x,122) -- (0.125*\x,124);
        \draw[thin] (4-0.25*\x,122) -- (4-0.125*\x,124);

        \draw[thin] (16-0.125*\x,120) -- (16-0.25*\x,122);
        \draw[thin] (12+0.125*\x,120) -- (12+0.25*\x,122);
        \draw[thin] (12+0.25*\x,122) -- (12+0.125*\x,124);
        \draw[thin] (16-0.25*\x,122) -- (16-0.125*\x,124);

        \draw[thin] (0.25*\x,140) -- (0.125*\x,142);
        \draw[thin] (4-0.25*\x,140) -- (4-0.125*\x,142);
  
  \draw[thin] (4-0.125*\x,126) -- (4-0.25*\x,128);  
        \draw[thin] (0.125*\x,126) -- (0.25*\x,128);
        \draw[thin] (12+0.125*\x,126) -- (12+0.25*\x,128);
        \draw[thin] (16-0.125*\x,126) -- (16-0.25*\x,128);
}

\foreach \x in {0,1,2,3,4,5,6,7,8}
{
        \draw[thin] (0.125*\x,144) -- (0.25*\x,146);
        \draw[thin] (4-0.125*\x,144) -- (4-0.25*\x,146);
}

\foreach \x in {0,0.5,1,1.5,2,2.5,3,3.5,4,4.5,5,5.5,6,6.5,7,7.5,8}
{
 \draw[very thin] (0.5*\x,146) -- (0.125*\x,147.5);
}

\draw [decorate, decoration={brace, mirror}]
      (18,118.2) -- (18,125.9)
      node [black, midway, right=1pt] {\scalebox{0.8}{$\Gamma(Q_2,Q_3)$}};

\draw [decorate, decoration={brace, mirror}]
      (17,126.1) -- (17,131.9)
      node [black, midway, right=1pt] {\scalebox{0.8}{$\Gamma(I_2)$}};

\draw [decorate, decoration={brace, mirror}]
      (15.5,133) -- (5,139.9)
      node [black, midway, right=1pt] {\scalebox{0.8}{$\Delta_2$}};

\node at (6,141) {\scalebox{0.5}{$\Gamma(I_1)$}};      
\node at (32,115) {\scalebox{0.8}{$Q_M=Q_3$}};
\node at (-1.8,149) {\scalebox{0.8}{$Q_0$}};
\node at (5.5,146) {\scalebox{0.8}{$Q_1$}};
\node at (17.5,138) {\scalebox{0.8}{$Q_2$}};
\end{tikzpicture}
\caption{The path family $\Gamma(Q_0;Q_M)$. Clearly, some fine features of the paths within individual cubes are not present in the figure.}
    \label{cone-Qx}
\end{figure}

Let $Q_0$ and $Q_M$ satisfy the assumptions of Lemma~\ref{pnt-2} and let $Q_1,\ldots,Q_m$, with $Q_m=Q_M$, be the sequence of intermediate cubes given by its claim. We wish to build a path family from $Q_0$ to $Q_M$ (Figure~\ref{cone-Qx}).

For each $1 \le i \le m$, the path family $\Gamma(Q_{i-1},Q_{i})$ is well-defined. If $m=1$, then $\Gamma(Q_0,Q_M)$ is the desired path family. So, suppose $m\ge 2$. The path family $\Gamma(Q_0,Q_1)$ ends on the top edge of $Q_1$ and the path family $\Gamma(Q_1,Q_2)$ starts along the bottom edge of $Q_1$ (Figure~\ref{cone-Qx}).

In order to join them inside $Q_1$, we attach an upside-down copy of $\Gamma(I_0)$ from inside $Q_1$ to the ends of $\Gamma(Q_0,Q_1)$. This is possible because $|I_1|\ge \lambda^{-\nu} |I_0|$. Similarly, we attach a copy of $\Gamma(I_1)$ from inside $Q_1$ to $\Gamma(Q_1,Q_2)$. Notice that these copies of $\Gamma(I_0)$ and $\Gamma(I_1)$ end along horizontal line segments (at different heights) inside $Q_1$, with lengths equal to $|I_0|$ and $|I_1|$, respectively. So, we concatenate them by adding line segments that connect each point from one segment to the point on the other segment that is its image under the canonical bijection.

We index this trapezoidal path family as $\Delta=\{\delta_t, t\in I_0\}$. Let us denote its trace by $T$. Note that $|I_0|$, the length of the top edge of $T$, can be much smaller than $|I_1|$, the length of its bottom edge.

We now specify exactly how we concatenate $\Gamma(Q_{0},Q_{1})$ to $\Delta$ and then to $\Gamma(Q_{1},Q_{2})$. For every $t \in I_0$, with the usual exception of countably many, there is the associated path $\eta_t$ in $\Gamma(Q_{0},Q_{1})$ and the associated path $\gamma_t$ in (the upside down copy of) $\Gamma(I_0)$. These two meet on the same point along the top edge of $Q_1$, so we can concatenate them in the obvious way.

Now, let $b$ be the bijection from the top edge of $T$ to its bottom edge. We continue our paths by joining $t$ to $b(t)$ via a straight line. This line is $\delta_t \in \Delta$. Then we continue with $\gamma_{b(t)}$ in the copy of $\Gamma(I_1)$ inside $Q_1$. Finally, we add the path $\eta_{b(t)} \in \Gamma(Q_{1},Q_{2})$. As they meet on the same point along the edge of $Q_1$ this is possible.

By appropriate re-parameterization, we have now found a single path associated to (almost) every $t \in I_0$ that begins on the bottom edge of $Q_0$ and ends on the top edge of $Q_2$.

Write $\Delta_1:=\Delta$, $T_1:=T=tr(\Delta_1)$. We repeat the construction of the auxiliary trapezoidal paths inside $Q_2,\ldots,Q_{m-1}$, and obtain the families $\Delta_i$, $1\le i \le m-1$, and their traces $T_i$. We use $\Delta_i$'s and copies of $\Gamma(I_i)$ to concatenate the families $\Gamma(Q_{i-1},Q_i)$, $i=1,\ldots,m$ and form a single family, denoted\footnote{Note the semicolon as opposed to a comma.} by $\Gamma(Q_0;Q_m)$, of paths that join the bottom edge of $Q_0$ to the top edge of $Q_m=Q_M$.

To summarize, here is what $\Gamma(Q_0;Q_m)$ consists of (assume $m \ge 3$; the cases of $m=1,2$ are simple, indeed, if $m=1$, then $\Gamma(Q_0;Q_m)$ coincides with $\Gamma(Q_0,Q_m)$):
\begin{align*}
    & \Gamma(Q_0,Q_1), \text{copy of $\Gamma(I_0)$}, \Delta_1, \text{copy of $\Gamma(I_1)$}, \Gamma(Q_1,Q_2), \\
    & \text{copy of $\Gamma(I_1)$}, \Delta_2, \ldots, \\
    & \text{copy of $\Gamma(I_{m-2})$}, \Delta_{m-1}, \text{copy of $\Gamma(I_{m-1})$}, \Gamma(Q_{m-1},Q_m).
\end{align*}
Observe that there are two copies of each of $\Gamma(I_i)$ for $1\le i \le m-2$, while there is only one copy of each of $\Gamma(I_0)$ and $\Gamma(I_{m-1})$. By Remark~\ref{rem:free-par}, we can re-index, if necessary, each of the component path families over $I_0$. So, we index the paths in $\Gamma(Q_0;Q_m)$ as $\xi_t, t \in I_0$. 

\begin{proposition}\label{lem:dbl-intg-cone-full}
Fix an arbitrary $x \in Q_0$. Then for every measurable $g\colon \mathbf{X} \to [0,\infty]$,
    \begin{equation}\label{eq:cone-double}
    \intavg_{I_0}\int_{\xi_t} g\, dsdt \lesssim \dist(x,Q_M) \Bigl(\mathcal M_{C\dist(x,Q_M)}g^p(x)\Bigr)^{1/p},
\end{equation}
where $C$ and the comparison constants depend only on $\lambda$, $\nu$ and $p$.
\end{proposition}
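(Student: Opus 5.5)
We split the trace of $\Gamma(Q_0;Q_m)$ into its finitely many constituent families and estimate each piece separately, all normalized as averages over $I_0$ (Remark~\ref{rem:free-par}). The pieces are:
\begin{itemize}
\item[(i)] $\Gamma(Q_{i-1},Q_i)$ for $1\le i\le m$;
\item[(ii)] the (at most two) copies of $\Gamma(I_i)$ inside $Q_i$, $0\le i\le m-1$;
\item[(iii)] the trapezoidal families $\Delta_i$, $1\le i\le m-1$.
\end{itemize}
The re-indexing from $I_0$ to $I_{i-1}$ or $I_i$ is measure preserving in the normalized sense, because the map $t\mapsto$ (index in the $i$-th family) pushes the normalized Lebesgue measure on $I_0$ forward to the normalized Lebesgue measure on the relevant interval. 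This holds by the canonical bijections and the self-similarity of Lemma~\ref{gammacopy}.

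For (i), Proposition~\ref{kern-eta-chim} gives
$$
\intavg\int g\chi_{E_i}\lesssim |I_{i-1}|^{1-\frac2p}\Bigl(\int_{E_i}g^p\Bigr)^{1/p}.
$$
For (ii), Proposition~\ref{kern-gam-Q} gives the same type of bound, with $|I_i|$ or $|I_{i-1}|$ and the integral over $Q_i$. For (iii), $T_i$ is a trapezoid with top edge $|I_{i-1}|$, bottom edge $|I_i|$, and height $\approx|I_i|$. When $|I_i|/|I_{i-1}|$ is bounded, Lemma~\ref{lem:coarea-intgr} gives a bound $\lesssim |I_i|^{-1}\int_{T_i}g\le |I_i|^{1-2/p}(\int_{T_i}g^p)^{1/p}$ by Hölder. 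When the ratio is large, $T_i$ is a truncated cone. Here I would use the polar-coordinate kernel of the form $\approx 1/\dist(\cdot,\text{top vertex})$ together with Hölder with $p>\textswab p_0>1$. Since the vertex lies at distance $\gtrsim|I_{i-1}|$ and the kernel is integrable in $L^{p'}$ over $T_i$ at least for $p\le 2$ (logarithmically at worst), this still yields a bound of order $|I_i|^{1-2/p}(\int_{T_i}g^p)^{1/p}$, possibly up to a harmless factor. In fact the ratio $|I_i|/|I_{i-1}|$ is a power of $\lambda^{-\nu}$, and the paths in $\Delta_i$ deviate from vertical only by a bounded angle over a height comparable to $|I_i|$. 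I would therefore try to check directly that $\|D\pi\|\lesssim |I_i|^{-1}\cdot(|I_{i-1}|/\text{width at that height})$ and integrate. This is the step I expect to be the main technical nuisance.

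Every piece $P$ in the $i$-th stage then satisfies
$$
\intavg_{I_0}\int_{\xi_t}g\chi_P\lesssim |I_i|^{1-2/p}\Bigl(\int_{P}g^p\Bigr)^{1/p}.
$$
All pieces of stage $i$ lie in $B(x,C|I_i|)$ by \eqref{sum-Ii}: the distance from $x$ to $Q_i$ is $\approx|I_{i-1}|\le |I_i|$, and $Q_i$ itself has diameter $\approx |I_i|$. The last stage is controlled by $|I_{m-1}|\approx\dist(x,Q_M)$. Hence
$$
|I_i|^{1-2/p}\Bigl(\int_{B(x,C|I_i|)}g^p\Bigr)^{1/p}\lesssim |I_i|\bigl(\mathcal M_{C|I_i|}g^p(x)\bigr)^{1/p},
$$
where Ahlfors regularity (Proposition~\ref{cin9}) lets us write $\mu(B(x,C|I_i|))\approx|I_i|^2$. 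Since $|I_i|\le|I_{m-1}|\approx\dist(x,Q_M)$, we may enlarge all truncation radii to $C\dist(x,Q_M)$.

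Summing over $i=0,\dots,m-1$ and using $|I_i|\le\lambda^{\nu(m-1-i)}|I_{m-1}|$ from Remark~\ref{rem:properties-J}(3), the geometric series gives
$$
\sum_i|I_i|\lesssim|I_{m-1}|\approx\dist(x,Q_M),
$$
which is \eqref{eq:cone-double}. The bounded number of pieces per stage only affects constants. The one point needing care is the first stage when $x$ is arbitrary in $Q_0$. There the balls must be centered at $x$ rather than on the cube, but $\dist(x,\text{piece})\lesssim|I_0|$, so this is again absorbed.
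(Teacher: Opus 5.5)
\textbf{Verdict.} Your decomposition matches the paper, and so does your final geometric summation via \eqref{sum-Ii}. Your treatment of $\Delta_i$ is fine when $|I_i|/|I_{i-1}|$ is bounded. There is a genuine gap in (iii), in the large-ratio case, and that case cannot be avoided: the ratio is a power of $\lambda^{-\nu}$, but the power is unbounded.

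\textbf{Why (iii) fails when $|I_i|/|I_{i-1}|$ is large.} On $T_i$ the normalized coarea kernel is $\approx 1/w$, where $w\approx |I_{i-1}|+s$ is the width at depth $s$ below the short edge. Your integrability claim is backwards. In the plane, $1/r$ fails to be $L^{p'}$-integrable near a point exactly when $p\le 2$. So the $L^{p'}$ norm of the kernel is governed by the short edge, not by $|I_i|$:
\[
\int_{T_i}w^{-p'}\,d\mu\approx\int_0^{|I_i|}(|I_{i-1}|+s)^{1-p'}\,ds\approx |I_{i-1}|^{2-p'} \quad (p<2),
\]
so $\|1/w\|_{L^{p'}(T_i)}\approx|I_{i-1}|^{1-2/p}$. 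H\"older over all of $T_i$ therefore leaves the unbounded factor $(|I_i|/|I_{i-1}|)^{2/p-1}$; at $p=2$ the factor is $\log^{1/2}(|I_i|/|I_{i-1}|)$, also unbounded.

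The single-scale bound you assert for every piece,
\[
\intavg_{I_0}\int_{\xi_t}g\chi_{T_i}\,ds\,dt\lesssim|I_i|^{1-2/p}\Bigl(\int_{T_i}g^p\,d\mu\Bigr)^{1/p},
\]
is in fact false. For $p<2$, let $g$ be the indicator of a disc of radius $\approx|I_{i-1}|$ at the short edge. Then the left side is $\approx|I_{i-1}|$, while the right side is $\approx|I_i|^{1-2/p}|I_{i-1}|^{2/p}$, which is much smaller. So no argument that controls $T_i$ through a single ball $B(x,C|I_i|)$ can work.

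The same mismatch affects (ii). The copy of $\Gamma(I_{i-1})$ inside $Q_i$ carries the prefactor $|I_{i-1}|^{1-2/p}$ from Proposition~\ref{kern-gamI-thm}. It must be paired with its own trace, of measure $\approx|I_{i-1}|^2$, and a ball of radius $C|I_{i-1}|$ about $x$. Pairing it with $\int_{Q_i}g^p$, or placing it in stage $i$, brings back the same factor.

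\textbf{The missing idea.} The maximal function at $x$ sees every scale, and $x$ lies at distance $\approx|I_{i-1}|$ from the short edge of $T_i$. That edge sits over $I_{i-1}\supset I_0$ near the top of $Q_i$, and $\dist(x,Q_i)\approx|I_{i-1}|$ by \eqref{sum-Ii}. Cut $T_i$ into layers $T_i^k$ at depth $\approx r_k:=2^k|I_{i-1}|$, for $0\le k\lesssim\log_2(|I_i|/|I_{i-1}|)$. Each layer has width and height $\approx r_k$, lies in $B(x,Cr_k)$, and carries kernel $\approx r_k^{-1}$. Hence
\[
\intavg_{I_0}\int_{\xi_t}g\chi_{T_i^k}\,ds\,dt\lesssim r_k^{-1}\int_{B(x,Cr_k)}g\,d\mu\lesssim r_k\,\mathcal M_{C|I_i|}g(x).
\]
Summing the geometric series gives $|I_i|\,\mathcal M_{C|I_i|}g(x)\le|I_i|\bigl(\mathcal M_{C|I_i|}g^p(x)\bigr)^{1/p}$. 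This is exactly Corollary~\ref{cor:coare-max-2}, which the paper applies to each $\Delta_i$. It is a first-order estimate followed by H\"older on the maximal function, not H\"older on the trapezoid. With this and the scale-matching fix in (ii), your summation goes through as written.
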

\begin{proof}
We will assume $m\ge 2$ as the case of $m=1$ requires only the estimate \eqref{coneq33} below. By the construction and Remark~\ref{rem:properties-J}, each of the families $\Delta_i$ that appear in the construction of $\Gamma(Q_0;Q_M)$ satisfies the assumptions of Corollary~\ref{cor:coare-max-2} with respect to $x$. Hence, with $T_i=tr(\Delta_i)$,
\begin{equation}\label{coneq11}
    \intavg_{I_0}\int_{\xi_t} g \chi_{T_i}\, dsdt \lesssim |I_i| \mathcal M_{Cd(x,Q_M)} g(x) \le |I_i| (\mathcal M_{Cd(x,Q_M)}g^p(x))^{1/p},
\end{equation}
where the last claim follows from H\"older's inequality. The comparison constants, here and in what follows, depend only on $\lambda$ and $\nu$ and $p$.

Let $\Gamma(I_i)$ be any of the path families that appear in the construction of $\Gamma(Q_0;Q_M)$. Then $tr(\Gamma(I_i))$ is at a distance from $x$ comparable to $|I_i|$ and has measure comparable to $|I_i|^2$. Together with Proposition~\ref{kern-gamI-thm} these prove that
\begin{equation}\label{coneq22}
    \intavg_{I_0}\int_{\xi_t} g \chi_{tr(\Gamma_i)}\, dsdt \lesssim  |I_{i}|\Bigl( \intavg_{tr(\Gamma_i)} g^pd\mu \Bigr)^{1/p}  \lesssim |I_{i}| \Bigl(\mathcal M_{C\dist(x,Q_M)}g^p(x)\Bigr)^{1/p}.
\end{equation}
Finally, denote by $E_i$ the trace of $\Gamma(Q_{i-1},Q_i)$, $i=1,\ldots,m$. By Remark~\ref{rem:properties-J}, $E_i$ is, roughly, at the right distance from $x$ and of the right measure so that by Proposition~\ref{kern-eta-chim} we obtain 
\begin{equation}\label{coneq33}
    \intavg_{I_0}\int_{\xi_t} g \chi_{E_i}\, dsdt \lesssim |I_{i}|\Bigl(\intavg_{E_i} g^pd\mu \Bigr)^{1/p} \lesssim |I_{i}|\Bigl(\mathcal M_{C\dist(x,Q_M)}g^p(x)\Bigr)^{1/p}.    
\end{equation}
Since, the path families that comprise $\Gamma(Q_0;Q_M)$ have essentially disjoint traces, by the additivity of the line integrals and the estimates \eqref{coneq11}, \eqref{coneq22} and \eqref{coneq33} we get
\begin{align*}
    \intavg_{I_0}\int_{\xi_t} g\, dsdt \lesssim \Bigl(\sum_{i=1}^{m-1} |I_{i}|\Bigr) \Bigl(\mathcal M_{C\dist(x,Q_M)}g^p(x)\Bigr)^{1/p} .
\end{align*}

By \eqref{sum-Ii} we have $\sum_{i=1}^{m-1} |I_{i}| \approx \dist(x,Q_M)$. The proof is complete.    
\end{proof}
\subsection{The double cone \protect\boldmath {$\Gamma_{x,y}$}}
We use the cones from the previous section to build double-cones that connect arbitrary cubes in $\mathbf X$.
\begin{lemma}
    \label{QM-cube-case}
Fix any pair of distinct cubes $Q_x=I_x\times J_x$ and $Q_y=I_y\times J_y$ in $\mathbf X$ and, without loss is generality, assume $|I_y| \ge |I_x|$. Then exactly one of the following holds:
\begin{itemize}
    \item [(a)] $I_x \subset I_y$ and for every $J$ between $J_x$ and $J_y$ we have $|J| < |I_y|$;
    \item [(b)] $I_x \subset I_y$ and for some $J$ between $J_x$ and $J_y$ we have $|J| \ge |I_y|$;
    \item [(c)] $I_x \cap I_y \ne \emptyset$.
\end{itemize}
In case (a) the path family $\Gamma(Q_x;Q_y)$ is well-defined. In case (b),  there exists a cube $Q_M=I_M\times J_M$ such that both $\Gamma(Q_x;Q_M)$ and $\Gamma(Q_y;Q_M)$ are well-defined and
    \begin{equation}
        \label{eq:case2-QM}
        |J_M|+\dist(Q_x,Q_M)+\dist(Q_y,Q_M) \approx \dist(Q_x,Q_y).
    \end{equation}
In case (c), there exists a cube $Q_M=I_M\times J_M$ such that both $\Gamma(Q_x;Q_M)$ and $\Gamma(Q_y;Q_M)$ are well-defined and there are two sub-cases: (c1) $J_M$ is between $J_x$ and $J_y$ and
    \begin{equation}
    \label{eq:case1-QM}
        |J_M|+\dist(Q_x,Q_M)+\dist(Q_y,Q_M) \approx \dist(Q_x,Q_y),
    \end{equation}
or (c2) both $J_x$ and $J_y$ are on the same side of $J_M$ and
    \begin{equation}
        \label{eq:case1-QM-c2}
        \dist(I_x,I_y)+\dist(Q_x,Q_M)+\dist(Q_y,Q_M) \approx \dist(Q_x,Q_y).
    \end{equation}
\end{lemma}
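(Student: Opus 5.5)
Throughout I read item (c) as ``$I_x\cap I_y=\emptyset$''. By Remark~\ref{rem-july-1}(3), two surviving intervals are either nested or disjoint. Since $|I_y|\ge|I_x|$, nesting means $I_x\subset I_y$, so the trichotomy is immediate. The plan in every case is the same: choose the ``middle'' cube $Q_M$ on the largest relevant row, verify the hypotheses of Lemma~\ref{pnt-2} between $Q_M$ and each of $Q_x,Q_y$, and compare distances using \eqref{sum-Ii}, Remark~\ref{rem:properties-J}, and the triangle inequality in the geodesic metric $d$. The triangle inequality gives $\dist(Q_x,Q_y)\le \dist(Q_x,Q_M)+\diam Q_M+\dist(Q_M,Q_y)$, so in each case only the reverse bound needs work.

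\emph{Case (a).} Apply Lemma~\ref{pnt-2} directly with $Q_0=Q_x$ and $Q_M=Q_y$ (or its ``above'' analogue). Its hypotheses are exactly $I_x\subset I_y$ and $|J|<|J_y|=|I_y|$ for all intermediate rows. Hence $\Gamma(Q_x;Q_y)$ is well-defined.

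\emph{Case (b).} Among the rows $J$ strictly between $J_x$ and $J_y$ with $|J|\ge|I_y|$, let $J_M$ be one of maximal length. It is unique: by Remark~\ref{rem:properties-J}(2), two rows of equal length have a strictly longer row between them, and that row also lies between $J_x,J_y$. Since $|J_M|\ge |I_y|$, Remark~\ref{rem-july-1}(4) provides a cube $Q_M=I_M\times J_M$ with $I_y\subset I_M$, hence $I_x\subset I_M$. By maximality and uniqueness, every row between $J_x$ and $J_M$, and every row between $J_M$ and $J_y$, is strictly shorter than $J_M$. Thus Lemma~\ref{pnt-2} applies to both pairs $(Q_x,Q_M)$ and $(Q_y,Q_M)$.

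For \eqref{eq:case2-QM}, the relation \eqref{sum-Ii} gives $\dist(Q_x,Q_M)\approx|I_{m-1}|\lesssim|J_M|$, and similarly for $Q_y$. Conversely, any path from $Q_x$ to $Q_y$ must cross the full height of the row $J_M$, so $|J_M|\le\dist(Q_x,Q_y)$. Combined with the triangle inequality and $\diam Q_M\approx|J_M|$, this yields \eqref{eq:case2-QM}.

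\emph{Case (c).} Let $I^*\in\mathcal I$ be the smallest surviving interval containing $I_x\cup I_y$. By Remark~\ref{minit1}, a removed interval of length $\approx|I^*|$ separates $I_x$ from $I_y$, so $\dist(I_x,I_y)\approx|I^*|$. Horizontal distance is dominated by $d$, hence $|I^*|\lesssim\dist(Q_x,Q_y)$.

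\emph{Sub-case (c1).} Suppose some row $J$ between $J_x,J_y$ has $|J|\ge|I^*|$. Take $J_M$ of maximal length among such rows (unique as before) and $Q_M$ over an interval containing $I^*$. The verification of Lemma~\ref{pnt-2} and the vertical-crossing lower bound then proceed verbatim as in case (b), giving \eqref{eq:case1-QM}.

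\emph{Sub-case (c2).} Otherwise, use $\tau$-density (Lemma~\ref{prop3.16}) to pick the nearest row $J_M$ with $|J_M|\ge|I^*|$ lying beyond both $J_x$ and $J_y$ on one side. Its distance from the nearer of $J_x,J_y$ should be $\lesssim|I^*|$, because no such row exists between them. Taking $J_M$ of maximal length within that window, every row between $J_x$ (or $J_y$) and $J_M$ is shorter than $J_M$. Again choose $Q_M$ over an interval containing $I^*$, and Lemma~\ref{pnt-2} applies. For the estimate, $\dist(Q_x,Q_M)$ and $\dist(Q_y,Q_M)$ are both $\lesssim|I^*|+|J_x-J_y|_{\rm vert}$, and each of these terms is bounded by $\dist(Q_x,Q_y)$.

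I expect sub-case (c2) to be the main obstacle. There one must control the vertical overshoot beyond the farther of $J_x,J_y$ using only density and separatedness, and check that the maximality choice really makes every intermediate row shorter. The intermediate rows here include those between $J_x$ and $J_y$, which are shorter than $|I^*|$ by assumption. I would first try to settle this by choosing $J_M$ as the maximal-length row within vertical distance $2\tau|I^*|$ on the appropriate side.
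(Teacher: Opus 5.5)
Your trichotomy, case (a), case (b) and sub-case (c1) are correct. They follow the paper's route: take the longest intermediate row, take the cube on it over an interval containing $I_y$ (resp.\ $I^*$), and apply \eqref{sum-Ii}. Your vertical-crossing bound $|J_M|\le\dist(Q_x,Q_y)$ makes explicit the lower estimate that the paper leaves implicit.

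The gap is in sub-case (c2), in the direction
$\dist(Q_x,Q_y)\lesssim\dist(I_x,I_y)+\dist(Q_x,Q_M)+\dist(Q_y,Q_M)$.
You settle this direction for all cases at the outset via $\dist(Q_x,Q_y)\le\dist(Q_x,Q_M)+\diam Q_M+\dist(Q_M,Q_y)$. That works only when $\diam Q_M\approx|J_M|$ is controlled by the left-hand side, and in \eqref{eq:case1-QM-c2} the term $|J_M|$ does not appear there. Your choice of $J_M$ does not control it either, whether you take the nearest row of length $\ge|I^*|$ beyond $J_x,J_y$ or the longest such row within $2\tau|I^*|$. For example, let $J_x=J_y$ be a row that lies much closer to the unit row $J_0$ than to any other row of length $\ge|I^*|$. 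Then both rules can return $J_M=J_0$, so $\diam Q_M\approx 1$, while $\dist(Q_x,Q_y)\approx|I^*|$ can be arbitrarily small. As written, your (c2) argument proves only the $\gtrsim$ half of \eqref{eq:case1-QM-c2}.

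The repair is short. Since $J_x$ and $J_y$ lie on the same side of $J_M$, take points $m_1,m_2\in Q_M$ realizing $\dist(Q_x,Q_M)$ and $\dist(Q_y,Q_M)$. They lie within these distances of the edge of $Q_M$ facing $Q_x,Q_y$, and their horizontal separation is at most $|I^*|+\dist(Q_x,Q_M)+\dist(Q_y,Q_M)$. Joining them by a segment in the convex set $Q_M$ gives $\dist(Q_x,Q_y)\lesssim|I^*|+\dist(Q_x,Q_M)+\dist(Q_y,Q_M)$, and $|I^*|\approx\dist(I_x,I_y)$.

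Alternatively, fix $k$ with $\lambda^{(k+1)\nu}<|I^*|\le\lambda^{k\nu}$. Take $J_M$ to be the closure of the central removed interval of the step-$k$ surviving interval of $\textswab{D}$ that contains $J_x\cup J_y$; this is in effect the paper's choice of minimal $|I_M|$. In (c2), $J_x$ and $J_y$ then lie in a single step-$(k+1)$ child adjacent to $J_M$. Consequently:
the cone condition holds;
$\dist(J_x,J_M)$ and $\dist(J_y,J_M)$ are $\lesssim|I^*|$ without any appeal to density;
$|J_M|\le\lambda^{-\nu}|I^*|$, so your triangle inequality does apply.

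If you keep the density construction instead, state explicitly which side you use. It must be the side on which the row produced by $\tau$-density at a point of $J_x$, with $r=|I^*|$, falls. An arbitrary side may contain no row of length $\ge|I^*|$ at all, e.g.\ near $\sup\textswab{D}$.
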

\begin{remark}
The cases in \eqref{eq:case1-QM-c2} is different from the others since, in comparison, the cube $Q_M$ can be much larger than $Q_x$ and $Q_y$, so, $|J_M|$ can be much larger than $\dist(I_x,I_y)$; see Figure~\ref{fig:full-dbl-cone}. The cube $Q_M$ is not necessarily unique, e.g.\ in case (c2).
\end{remark}

\begin{proof}[Proof of Lemma~\ref{QM-cube-case}]
By Remark~\ref{rem-july-1} part (3), the cases (a), (b) and (c) are exhaustive. The claim about case (a) is the very content of the previous section. In case (b), we first identify $J_M \in \mathcal J$ as the largest $J$ between $J_x$ and $J_y$. This is unique by part (2) of Remark~\ref{rem:properties-J}. Then we let $Q_M=I_M\times J_M$ to be the unique cube that satisfies $I_M \supset I_y$. It is now easy to see that the conditions of the previous section are satisfied with $Q_x$, resp.\ $Q_y$, and $Q_M$. The estimate \eqref{eq:case2-QM} is a consequence of \eqref{sum-Ii}.

For case (c), observe that the set of cubes $Q=I\times J$ with the property that $I_y \subset I$ is finite and nonempty. We choose the ones that have the minimal $|I|$, and among them, we choose one that meets the required distance condition. Such choice always exists but may not be unique. The cones are well-defined and the claimed distance estimates follow again from \eqref{sum-Ii}.
\end{proof}

\begin{figure}[t]
     \centering
\begin{tikzpicture}[scale=0.12]
  \draw[thick] (-2,118) -- ++(68,0);

  \fill[gray!30] (-2,78) rectangle ++(68,118-78);

    \foreach \k in {126} 
    \foreach \l in {0,48} \draw[line width=.2pt,fill=gray!30] (\l,\k) rectangle ++(16,16);

    \foreach \m in { 120, 144 } 
    \foreach \n in {0, 12, 48, 60} \draw[line width=.2pt,fill=gray!30] (\n,\m) rectangle ++(4,4);

    \foreach \m in {118.5, 124.5, 142.5, 148.5 } 
    \foreach \n in {0, 3, 12, 15, 48, 51, 60, 63} \draw[line width=.2pt,fill=gray!30] (\n,\m) rectangle ++(1,1);

  \foreach \x in {0,1,2,3,4,5,6,7,8,9,10,11,12,13,14,15,16}
  {
    \draw (0.5*\x,132) -- (0.25*\x,128);    
    \draw (16-0.5*\x,132) -- (16-0.25*\x,128);
}

\foreach \x in {0,0.5,1,1.5,2,2.5,3,3.5,4,4.5,5,5.5,6,6.5,7,7.5,8,8.5,9,9.5,10,10.5,11,11.5,12,12.5,13,13.5,14,14.5,15,15.5,16}
{
\draw[thin] (\x,132) -- (0.25*\x,140);
}

\foreach \x in {0,1,2,3,4,5,6,7,8}
  {
        \draw[very thin] (0.125*\x,148) -- (0.125*\x,147.5);

}

\foreach \x in {0,2,4,6,8}
 {
  \draw [thin](0.125*\x,124.5) -- (0.1255*\x,125.5);
        \draw [thin](4-0.125*\x,124.5) -- (4-0.1255*\x,125.5);
        \draw [thin](12+0.125*\x,124.5) -- (12+0.1255*\x,125.5);
        \draw [thin](15+0.125*\x,124.5) -- (15+0.1255*\x,125.5);

    \draw [thin](0.125*\x,118.5) -- (0.1255*\x,119.5);
        \draw [thin](16-0.125*\x,118.5) -- (16-0.1255*\x,119.5);
        \draw [thin](12+0.125*\x,118.5) -- (12+0.1255*\x,119.5);
        \draw [thin](3+0.125*\x,118.5) -- (3+0.1255*\x,119.5);

    \draw [thin](48+0.125*\x,118.5) -- (48+0.1255*\x,119.5);
        \draw [thin](51+0.125*\x,118.5) -- (51+0.1255*\x,119.5);

     \draw[very thin] (0.125*\x,142.5) -- (0.1255*\x,143.5);
        \draw[very thin](3+0.125*\x,142.5) -- (3+0.1255*\x,143.5);

}

\foreach \x in {0,2,4,6,8}
{
 \draw [very thin](0.125*\x,148.5) -- (0.1255*\x,149);
}

\foreach \x in {0,1,2,3,4,5,6,7,8}
  {
    \draw[thin] (4-0.125*\x,120) -- (4-0.25*\x,122);
        \draw[thin] (0.125*\x,120) -- (0.25*\x,122);
        \draw[thin] (0.25*\x,122) -- (0.125*\x,124);
        \draw[thin] (4-0.25*\x,122) -- (4-0.125*\x,124);

        \draw[thin] (16-0.125*\x,120) -- (16-0.25*\x,122);
        \draw[thin] (12+0.125*\x,120) -- (12+0.25*\x,122);

        \draw[thin] (52-0.125*\x,120) -- (52-0.25*\x,122);
        \draw[thin] (48+0.125*\x,120) -- (48+0.25*\x,122);
        
        \draw[thin] (12+0.25*\x,122) -- (12+0.125*\x,124);
        \draw[thin] (16-0.25*\x,122) -- (16-0.125*\x,124);

        \draw[thin] (0.25*\x,140) -- (0.125*\x,142);
        \draw[thin] (4-0.25*\x,140) -- (4-0.125*\x,142);
  
  \draw[thin] (4-0.125*\x,126) -- (4-0.25*\x,128);  
        \draw[thin] (0.125*\x,126) -- (0.25*\x,128);
        \draw[thin] (12+0.125*\x,126) -- (12+0.25*\x,128);
        \draw[thin] (16-0.125*\x,126) -- (16-0.25*\x,128);
}

\foreach \x in {0,1,2,3,4,5,6,7,8}
{
        \draw[very thin] (0.125*\x,144) -- (0.25*\x,146);
        \draw[very thin] (4-0.125*\x,144) -- (4-0.25*\x,146);
}

\foreach \x in {0,0.5,1,1.5,2,2.5,3,3.5,4,4.5,5,5.5,6,6.5,7,7.5,8}
{
 \draw[very thin] (0.5*\x,146) -- (0.125*\x,147.5);
}

\foreach \x in {0,1,2,3,4,5,6,7,8}
{
 \draw[thin] (4-0.25*\x,116) -- (4-0.125*\x,118);  
\draw[thin] (0.25*\x,116) -- (0.125*\x,118);
\draw[thin] (12+0.25*\x,116) -- (12+0.125*\x,118);
 \draw[thin] (16-0.25*\x,116) -- (16-0.125*\x,118);

  \draw[thin] (48+0.25*\x,116) -- (48+0.125*\x,118);
 \draw[thin] (52-0.25*\x,116) -- (52-0.125*\x,118);  
 }

\foreach \x in {0,1,2,3,4,5,6,7,8,9,10,11,12,13,14,15,16}
  {
    \draw[very thin] (0.5*\x,112) -- (0.25*\x,116);    
    \draw[very thin] (16-0.5*\x,112) -- (16-0.25*\x,116);
}

\foreach \x in {0, 0.5, 1, 1.5, 2, 2.5, 3, 3.5, 4, 4.5, 5, 5.5, 6, 6.5, 7, 7.5, 8, 8.5, 9, 9.5, 10, 10.5, 11, 11.5, 12, 12.5, 13, 13.5, 14, 14.5, 15, 15.5, 16}
{
        \draw[very thin] (\x,112) -- (22,82+1.3*\x); 
        \draw[very thin] (22,82+1.3*\x) -- (52-0.25*\x,116); 
}
        
\node at (-1.8,149) {\scalebox{0.8}{$Q_x$}};
\node at (46,122) {\scalebox{0.8}{$Q_y$}};
\node at (31,110) {\scalebox{0.8}{$\Delta_M$}};
\end{tikzpicture}
  \caption{The family of paths from $Q_x$ to $Q_y$, joined with a family $\Delta_M$ inside $Q_M$.}
   \label{fig:full-dbl-cone}
\end{figure}
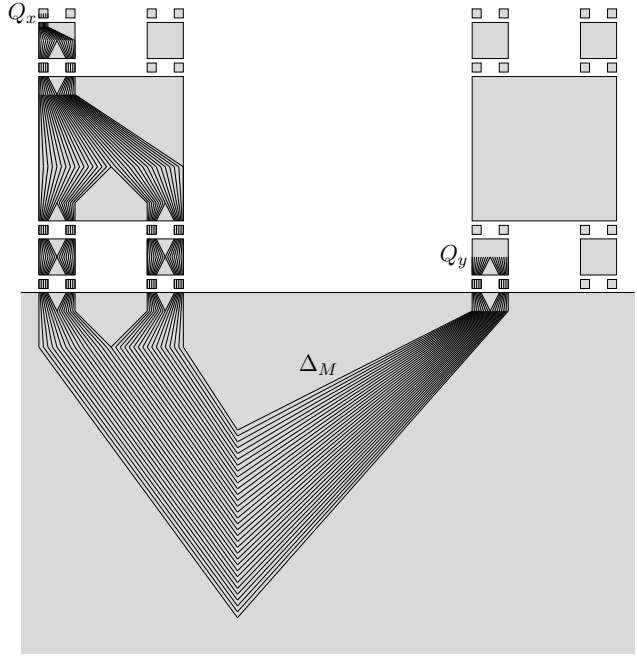

In the context of Lemma~\ref{QM-cube-case}, let $Q_x=Q_0, Q_1,\ldots, Q_{m_x-1}, Q_{m_x}=Q_{M}$, where $m_x \ge 1$, be the sequence of cubes in the definition of the cone $\Gamma(Q_x;Q_M)$ (Lemma~\ref{pnt-2}). We glue of a copy of $\Gamma(I_{m_x-1})$, from inside $Q_M$, to the end of $\Gamma(Q_x;Q_M)$. We denote by $I_{x,M}$ the horizontal line segment inside $Q_M$ along which these paths end. Observe that $|I_{x,M}|=|I_{m_x-1}|$. Unless we are in case (a) of Lemma~\ref{QM-cube-case}, we similarly find $I_{m_y-1}$ and $I_{y,M}$.

The cones $\Gamma(Q_x;Q_M)$ and $\Gamma(Q_y;Q_M)$ will do the most intricate part of connecting the cubes $Q_x$ and $Q_y$ by a path family. However, we will need to fill the gap between the two inside $Q_M$. The next lemma guarantees that we can do so while retaining desirable integral estimates.
\begin{lemma}\label{lem:Delta-M-fam}
There exists a path family $\Delta_M= \{\delta_t: t \in I_{x,M}\}$ such that $\delta_t$ joins $t \in I_{x,M}$ to $b(t) \in I_{y,M}$, where $b\colon I_{x,M} \to I_{y,M}$ is the canonical bijection, and for every measurable $g\colon \mathbf X \to [0,\infty]$ we have
    \begin{equation}\label{eq:kern-Delta-M}
    \intavg_{I_{x,M}} \int_{\delta_t} g\, dsdt \lesssim d(x,y)(\mathcal M_{Cd(x,y)} g(x) + \mathcal M_{Cd(x,y)} g(x)),
\end{equation}
with a comparison constant that depends only on $p, \nu, \lambda$.
\end{lemma}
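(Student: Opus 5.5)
Since both segments $I_{x,M}$ and $I_{y,M}$ are horizontal segments lying inside the single square $Q_M$, which is convex, I would build $\Delta_M$ entirely inside $Q_M$ and estimate it with the coarea tools of Section~\ref{sec:coare-trapez}. The \eqref{eq:kern-Delta-M} bound should presumably read $\mathcal M_{Cd(x,y)}g(x)+\mathcal M_{Cd(x,y)}g(y)$, and I will aim for that.

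\emph{Case 1: the segments lie at different heights.} The region between them, swept by segments from $t$ to $b(t)$, is a trapezoid $T_M\subset Q_M$. Its parallel sides have lengths $|I_{m_x-1}|$ and $|I_{m_y-1}|$, and its height is comparable to $|I_M|$. I split $T_M$ at its horizontal midline and set $\ell=\min\{|I_{m_x-1}|,|I_{m_y-1}|\}$ and $L=|I_M|$. The half adjacent to the shorter side is a truncated cone whose vertex side lies at distance $\approx\ell$ from that short side. Corollary~\ref{cor:coare-max-2} therefore applies with the base point taken as $x$ (or $y$, according to which short side it is). The remaining half has side ratio bounded in terms of $\lambda,\nu$, because its midline has length $\gtrsim |I_M|$, so Corollary~\ref{cor:coare-max} applies directly with a ball around $x$ (or $y$) of radius $C|I_M|$. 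Both halves give a contribution $\lesssim |I_M|\,\mathcal M_{C|I_M|}g(\cdot)$.

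\emph{Case 2: the segments lie at the same height or nearly so.} This can happen in case (c2), where both cones enter $Q_M$ from the same side. Here I would first push each segment vertically into the middle of $Q_M$, using trapezoids of the same type as in Case 1. I would then join the two resulting horizontal segments of length $\approx |I_M|$ by a bounded-ratio family of the kind used for the plane in Section~\ref{sec:pencils}. Each piece is again handled by Corollary~\ref{cor:coare-max} or Corollary~\ref{cor:coare-max-2}.

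The key geometric inputs are two facts.

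First, $\dist(x,Q_M)\lesssim d(x,y)$ and $\dist(y,Q_M)\lesssim d(x,y)$, and likewise $|I_{m_x-1}|\lesssim d(x,y)$. These follow from \eqref{sum-Ii} together with \eqref{eq:case2-QM}, \eqref{eq:case1-QM} and \eqref{eq:case1-QM-c2}.

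Second, the diameter of $\Delta_M$ and its distance to $x$ must be controlled by $d(x,y)$. This is the step I expect to be the \textbf{main obstacle}, specifically in case (c2). There $|J_M|$ may be much larger than $\dist(I_x,I_y)$, so $\Delta_M$ must not traverse the whole of $Q_M$. To avoid this, I would take the connecting trapezoids only of height comparable to $\max\{|I_{m_x-1}|,|I_{m_y-1}|,\dist(I_{x,M},I_{y,M})\}$ near the entry edge, instead of height $|I_M|$. Every scale in the construction is then $\lesssim d(x,y)$. Given this, all balls used lie in $B(x,Cd(x,y))$ or $B(y,Cd(x,y))$, and the pieces sum to give \eqref{eq:kern-Delta-M}.
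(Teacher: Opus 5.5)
Your overall plan agrees with the paper's. You build $\Delta_M$ inside $Q_M$, estimate each piece with the coarea tools of Section~\ref{sec:coare-trapez}, and keep every scale at $\dist(I_{x,M},I_{y,M})$ rather than $|I_M|$. You are also right that the second term in \eqref{eq:kern-Delta-M} should be evaluated at $y$. \textbf{Your Case~1, however, does not work.}

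If $\delta_t$ is the straight segment from $t$ to $b(t)$, the swept region $T_M$ has midline of length $\frac12(|I_{x,M}|+|I_{y,M}|)$, not $\gtrsim |I_M|$. Both lengths can be far smaller than $|I_M|$. In case (b), take $Q_x$ and $Q_y$ to be tiny cubes just above and just below $Q_M$. By \eqref{sum-Ii}, $|I_{m_x-1}|\approx\dist(Q_x,Q_M)$ and $|I_{m_y-1}|\approx\dist(Q_y,Q_M)$ are then tiny, while $d(x,y)\approx|I_M|$.

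In this situation $T_M$ is a tube of width $w\approx\max\{|I_{x,M}|,|I_{y,M}|\}$ and height $\approx|I_M|$. The half adjacent to the shorter side is not a truncated cone with apex within $\approx$ its short side of $x$ or $y$. The other half has long side $\approx w$, much smaller than its height. So neither Corollary~\ref{cor:coare-max} nor Corollary~\ref{cor:coare-max-2} applies.

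The estimate itself also fails. Let $g$ be the indicator of the middle third (in height) of $T_M$. Every $\delta_t$ picks up length $\gtrsim|I_M|$, so the left side of \eqref{eq:kern-Delta-M} is $\gtrsim |I_M|$. On the other hand, $g$ vanishes on $B(x,c|I_M|)\cup B(y,c|I_M|)$ and has integral $\lesssim w|I_M|$. Hence $\mathcal M_{Cd(x,y)}g(x)+\mathcal M_{Cd(x,y)}g(y)\lesssim w/|I_M|$, and the right side is $\lesssim w$. Even the $L^p$ version used later is only $\lesssim|I_M|(w/|I_M|)^{1/p}\ll|I_M|$.

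The missing idea is that the pencil must fan out, as in the pencil for $\mathbb R^2$ in Section~\ref{sec:pencils}. The paper joins $I_{x,M}$ and $I_{y,M}$ separately to a common segment of length comparable to $\dist(I_{x,M},I_{y,M})$, placed closer to the larger of the two segments. Three facts make each piece a genuine truncated cone with vertex near $x$ (resp.\ $y$), so that Corollary~\ref{cor:coare-max-2} applies. They are $d(x,I_{x,M})\approx|I_{x,M}|$, $d(y,I_{y,M})\approx|I_{y,M}|$, and the separation bound $\dist(I_{x,M},I_{y,M})\gtrsim\max\{|I_{x,M}|,|I_{y,M}|\}$. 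You never state the separation bound, but you need it: it guarantees the common segment is at least comparable to both ends, so each cone widens rather than narrows.

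This fan-out is what your Case~2 already does, with the rescaling from your last paragraph. The fix is to use that construction in every case. The distinction between segments at the same or different heights then becomes unnecessary, and your scale control in case (c2) carries over unchanged.
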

\begin{proof}
From the constructions, we know $d(x,I_{x,M}) \approx |I_{x,M}|$ and similarly $d(y,I_{y,M}) \approx |I_{y,M}|$. Observe that $I_{x,M}$ and $I_{y,M}$ might both be near the same edge of $Q_M$ or be near the opposite sides of it. In either case, 
$$
\dist(I_{x,M},I_{y,M}) \gtrsim \max\{|I_{x,M}|,|I_{y,M}|\}.
$$
Within $Q_M$ we are practically in $\mathbb R^2$. The desired path family $\Delta_M$ can be constructed by gluing together two path families that join $I_{x,M}$, resp.\ $I_{y,M}$, to a common line segment in $Q_M$ of length comparable to $\dist(I_{x,M},I_{y,M})$, positioned closer to the larger of $I_{x,M}$ and $I_{y,M}$ (Figure~\ref{fig:full-dbl-cone}).

There is flexibility in the choices above. We omit the details, but by analysis similar to that in Section~\ref{sec:coare-trapez}, and the distance estimates above, we can guarantee \eqref{eq:kern-Delta-M}.
\end{proof}
Suppose $x$ and $y$ belong to different cubes $Q_x= I_x\times J_x$ and $Q_y=I_y\times J_y$, respectively. We wish to build a family of paths that join $x$ to $y$.

Let us consider the most involved case that will require all of the types of the path families that we have constructed so far. Namely, suppose we are in either case (b) or (c) of Lemma~\ref{QM-cube-case}. Other cases can be treated by simpler versions of the analysis below as fewer path families will be required.

Within $Q_x$ we construct a family of paths $\Delta_x=\{\delta_t, t \in I_x\}$ that start from $x$ and end along the copy of $I_x$ where $\Gamma(I_x)$ start. If $x$ is far enough from $I_x$ we can simply take the radial projection from $x$. If $x$ is near to or on the segment $I_x$, then we project radially to another horizontal segment and then from that to $I_x$. It will not be a problem if these paths self-intersect or their traces intersect the traces of $\Gamma(I_x)$. The important fact is that we can guarantee (as in in Section~\ref{sec:coare-trapez}) that for every measurable $g \colon Q_x \to [0,\infty]$
\begin{equation}\label{eq:Delta-x}
    \intavg_{\Delta_x}\int_{\gamma_t} g\, dsdt \lesssim \diam (Q_x) \mathcal M_{(\diam Q_x)}g(x).
\end{equation}
We consider the analogous path family $\Delta_y$ inside $Q_y$ such that for every measurable $g \colon Q_y \to [0,\infty]$
\begin{equation}\label{eq:Delta-y}
    \intavg_{\Delta_y}\int_{\gamma_t} g\, dsdt \lesssim \diam (Q_y) \mathcal M_{(\diam Q_y)}g(y).
\end{equation}
We concatenate, in the order below, all of the following path families (Figure~\ref{fig:full-dbl-cone}). (Remember from Remark~\ref{rem:free-par} that we can easily re-index any of the path families from $I_x$, which we do below.)
\begin{enumerate}
    \item $\Delta_x$, which start at $x$ and end on $I_x$ inside $Q_x$,
    \item $\Gamma(I_x)$,
    \item $\Gamma(Q_x;Q_M)$,
    \item copy of $\Gamma(I_{m_x-1})$ inside $Q_M$,
    \item the family $\Delta_M$ of Lemma~\ref{lem:Delta-M-fam},
    \item copy of $\Gamma(I_{m_y-1})$, with orientation such that they end on the edge of $Q_M$,
    \item $\Gamma(Q_y;Q_M)$, oriented such that they end on the edge of $Q_y$,
    \item $\Gamma(I_y)$, with orientation such that they end on $I_y$, and
    \item $\Delta_y$, with orientation such that they end at $y$.
\end{enumerate}
As already mentioned, if we are in case (a) of Lemma~\ref{QM-cube-case}, or if $Q_x$ and $Q_y$ are the same cubes, then a path family from $x$ to $y$ can be constructed by concatenating only the appropriate subset of the path families above, and possibly a suitable replacement for $\Delta_M$. In the interest of brevity we leave such technicalities out.

Denote the path family from $x$ to $y$ by $\Gamma_{x,y}$. We are ready to prove the Poincar\'e inequality on $\mathbf X$.
\subsection{Sufficiency of \protect\boldmath {$p>\textswab{p}_0$}}\label{sec:yes-PI}
Since $\mathbf X \setminus \mathbf X'$ has zero $\mu$-measure, by Lemma~\ref{lem:pointwise-then-PI}, it suffices to prove that there are constants $C\geq 1$ and $C'>0$, such that for every (continuous) $u$ and every upper-gradient $g$ of $u$, we have
\begin{equation}
\label{pntws}
    |u(y)-u(x)| \leq C' d(x,y) \Bigl(\mathcal M_{Cd(x,y)}g^p(x))^{1/p} + \mathcal M_{Cd(x,y)}g^p(y))^{1/p}\Bigr),
\end{equation}
for every $x,y\in X'$.

Toward this, let $Q_x$ and $Q_y$ be the cubes that contain $x \in \mathbf X'$ and $y \in \mathbf X'$, respectively. Again, we consider the most involved case as the other cases follow similarly. Namely, suppose that we have either case (b) or (c) of Lemma~\ref{QM-cube-case}. Let $\Gamma_{x,y}$ be the path family constructed by concatenating the list of path families in (1)-(9) above.

Each individual $\gamma \in \Gamma_{x,y}$ is a concatenation of subpaths that belong to $\Delta_x, \Gamma(I_x)$, etc. Let $\Gamma$ be any of the latter families. By the expression
$$
\int_\Gamma \int_\gamma \, dsd\gamma
$$
we shall mean that the inside integral occurs over the subpath of $\gamma$ that belongs to $\Gamma$, and the exterior integral is with respect to the probability measure that $\Gamma$ is equipped with, i.e.\ the normalized Lebesgue measure on the interval over which $\Gamma$ is indexed (Remark~\ref{rem:free-par}).

By this notation and the additivity of integral, for every nonnegative measurable $g$ we have
\begin{align*}
    \intavg_{I_x} \int_{\gamma_t} g\, ds dt = \intavg_{\Delta_x} \int_\gamma g\, ds d\gamma & + \intavg_{\Gamma(I_x)} \int_\gamma g\, ds d\gamma +  \intavg_{\Gamma(Q_x;Q_M)} \int_\gamma g\, ds d\gamma \\
    & + \intavg_{\Gamma(I_{m_x-1})}\int_\gamma g\, ds d\gamma +\intavg_{\Delta_M}\int_\gamma g\, ds d\gamma \\
    & +\intavg_{\Gamma(I_{m_y-1})} \int_\gamma g\, ds d\gamma + \intavg_{\Gamma(Q_y;Q_M)} \int_\gamma g\, ds d\gamma \\
     & + \intavg_{\Gamma(I_y)} \int_\gamma g\, ds d\gamma + \intavg_{\Delta_y} \int_\gamma g\, ds d\gamma.
\end{align*}
Now we apply the integral estimates for the individual families that were proved throughout the preceding sections. First, from inequalities \eqref{eq:Delta-x} and \eqref{eq:Delta-y} and H\"older's inequality we have
$$
 \intavg_{\Delta_x} \int_\gamma g\, ds d\gamma \lesssim d(x,y) \Bigl(M_{Cd(x,y)}g^p(x)\Bigr)^{1/p}
$$
and
$$
 \intavg_{\Delta_y} \int_\gamma g\, ds d\gamma \lesssim d(x,y) \Bigl(M_{Cd(x,y)}g^p(y)\Bigr)^{1/p}.
$$
By Proposition~\ref{lem:dbl-intg-cone-full} and the distance estimates of Lemma~\ref{pnt-2},
$$
 \intavg_{\Gamma(Q_x;Q_M)} \int_\gamma g\, ds d\gamma
 \lesssim  d(x,y) \Bigl(M_{Cd(x,y)}g^p(x)\Bigr)^{1/p}
$$
and
$$
\intavg_{\Gamma(Q_y;Q_M)} \int_\gamma g\, ds d\gamma
 \lesssim d(x,y)  \Bigl(M_{Cd(x,y)}g^p(y)\Bigr)^{1/p}.
$$
Lemma~\ref{lem:Delta-M-fam} and H\"older's inequality give
$$
\intavg_{\Delta_M}\int_\gamma g\, ds d\gamma \lesssim d(x,y)  \Bigl((M_{Cd(x,y)}g^p(x))^{1/p}+(M_{Cd(x,y)}g^p(y))^{1/p}\Bigr).
$$
Finally, Proposition~\ref{kern-gamI-thm} and the distance estimates of Lemma~\ref{pnt-2}, yield the same upper bound on each of the remaining four integrals.

Combining the inequalities gives
\begin{equation}\label{eq-a-monday1}
    \intavg_{\Gamma_{x,y}} \int_\gamma g\, ds d\gamma \lesssim d(x,y)\Bigl(\mathcal M_{Cd(x,y)}g^p(x))^{1/p} + \mathcal M_{Cd(x,y)}g^p(y))^{1/p}\Bigr).
\end{equation}
Now if $u\colon \mathbf X \to \mathbb R$ is continuous and $g$ is an upper-gradient of $u$, then
$$
|u(x)-u(y)| \le \int_\gamma g\, ds, \quad \text{for every $\gamma \in \Gamma_{x,y}$.}
$$
Taking integral average on both sides of the latter inequality, over the paths $\gamma \in \Gamma_{x,y}$, and using \eqref{eq-a-monday1} yields
\begin{align*}
|u(x)-u(y)| & \lesssim \intavg_{\Gamma_{x,y}} \int_\gamma g\, ds d\gamma \\
& \lesssim d(x,y)\Bigl(\mathcal M_{Cd(x,y)}g^p(x))^{1/p} + \mathcal M_{Cd(x,y)}g^p(y))^{1/p}\Bigr).
\end{align*}
Since, $C$ and the comparison constants are independent of $x$ and $y$, we have confirmed \eqref{pntws}. We have proved that if $p>\textswab{p}_0$, then $\mathbf X$ supports a $p$-Poincar\'e inequality.

\subsection{Necessity of \protect\boldmath {$p>\textswab{p}_0$}}
\label{secpino}
\begin{proposition}\label{necmain}
  The space $\mathbf X$ corresponding to the choice of $\textswab{C}=\textswab{C}(\lambda)$ and $\textswab{D}=\frac{1}{1-2\lambda^\nu}\textswab{C}(\lambda^\nu),$ where $\lambda \in (0,1/2)$ and $ \nu \in \{2,3,\ldots\}$, does not support a $q$-Poincar\'e inequality for any $q \in [1,\textswab{p}_0]$.
\end{proposition}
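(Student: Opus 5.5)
We will prove the contrapositive by exhibiting, for each $q\le\textswab{p}_0$, test functions that violate \eqref{defeq:p-PIi} at a fixed scale. Since a $q$-Poincar\'e inequality implies a $q'$-Poincar\'e inequality for every $q'\ge q$ (by H\"older), it suffices to treat $q=\textswab{p}_0$. The natural mechanism is the one that made $K_p(Q_1,Q_2)$ blow up: any path crossing the region between two large rows must traverse every row $J\in\mathcal J(Q_1,Q_2)$, and for $p\le\textswab{p}_0$ the ``capacity'' of the family of such crossings is too small. Concretely, fix a pair $Q_1=I_1\times J_1$, $Q_2$ the first large cube below, say at unit scale (and by self-similarity at all scales $\lambda^{k\nu}$), and let $E=\bigcup\mathcal Q(Q_1,Q_2)$. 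We will build a continuous $u$ that is $0$ on (a neighbourhood of) $Q_1$, $1$ on $Q_2$ and on everything below, with an upper-gradient $g$ supported in $E$ with $\int g^q\,d\mu$ as small as we like. Since $Q_1$ and $Q_2$ both have measure $\approx|I_1|^2$ inside a ball $B$ of radius $\approx|I_1|$, the left side of \eqref{defeq:p-PIi} stays bounded below while the right side tends to $0$; note that the only way from $Q_1$ into the rest of $\mathbf X$ goes through $E$ (the column over $I_1$ is separated horizontally from the other parts of $\mathbf X$ between the rows $J_1,J_2$, and the cubes on the row $J_1$ near $Q_1$ can be handled by putting $u=0$ on them as well, choosing the ball inside the column, or by choosing $Q_1$ to be a cube whose row neighbours are far).

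The construction of $u$: for each row $J\in\mathcal J(Q_1,Q_2)$ with $|J|=|I_1|\lambda^{k\nu}$, and each cube $Q=I\times J$ in it ($2^{k\nu}$ of them), we let $u$ increase by an amount $a_J$ across $Q$ (vertically, linearly in $y$), with $\sum_J a_J=1$; $u$ is constant on the Cantor-fibre segments $\textswab{C}\times\textswab{D}$ between rows, which is consistent because the set $\textswab{C}\times\textswab{D}$ has measure zero and paths crossing it vertically contribute nothing by continuity (the increments sum appropriately, making $u$ a ``Cantor-staircase'' in $y$). Then $g=a_J/|J|$ on each such cube is an upper gradient (horizontal motion within $\mathbf X$ between distinct cubes of a row is impossible without passing vertically, and the accumulated increments are monotone in $y$). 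We get
\[\int g^q\,d\mu\approx\sum_{k\ge1}N_k\,2^{k\nu}\,a_k^q\,(|I_1|\lambda^{k\nu})^{2-q},\]
where $N_k\approx 2^{k-1}$ is the number of rows of size $|I_1|\lambda^{k\nu}$, but wait, only cubes meeting the column over $I_1$ count: $2^{k\nu}$ cubes per row inside $I_1$. We minimize subject to $\sum_k N_k a_k=1$; the optimal choice (Lagrange/H\"older duality) gives minimum $\bigl(\sum_k N_k\,w_k^{-1/(q-1)}\bigr)^{1-q}$ with $w_k=2^{k\nu}(|I_1|\lambda^{k\nu})^{2-q}$, which is exactly $K_q^{-q}$ up to the factor $|I_1|$-powers, i.e. it is $0$ iff $\sum_k 2^k(2^{-1/(q-1)}\lambda^{(q-2)/(q-1)})^{k\nu}=\infty$, i.e. iff $q\le\textswab{p}_0$ (the computation in the proof of Lemma~\ref{kp-lim} run backwards, using that the count of rows at level $k$ is also $\gtrsim 2^k$). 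For divergent series, truncating at level $k\le N$ and putting $a_k\propto (N_k w_k)^{\ldots}$ yields admissible $u_N$ with $\int g_N^q\to0$.

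The main obstacle is the verification that $g$ is truly an upper gradient of the staircase $u$ on all of $\kappa B$, i.e.\ that every rectifiable path from the top of $E$ to its bottom gains increment only through the cubes of $E$ and that $u$ is continuous across the points of $\textswab{C}\times\textswab{D}$; for this I would define $u(x,y)=\phi(y)$ on the column with $\phi$ the monotone function with $\phi'=a_J/|J|$ on each $J$ and $\phi'=0$ elsewhere (continuous since $\textswab{D}$ is null), extend by constants outside, and check the upper-gradient inequality via $|\phi(y_1)-\phi(y_2)|\le\int g$ along vertical projection, which is $1$-Lipschitz. The counting lower bound on rows per level and the equality case at $q=\textswab{p}_0$ (divergent geometric series with ratio exactly $1$) are then routine.
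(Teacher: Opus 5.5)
Your key idea is sound, but the localization step fails as written. It is not true that every path leaving $Q_1$ must pass through $E$. The column over $I_1$ is horizontally isolated only inside the strip between $J_2$ and $J_1$.

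Unless $Q_1$ sits in a dead end (for instance, $J_1$ the topmost row of its length), there is a first large cube $Q_3=I_3\times J_3$ above $Q_1$. Consider the path that:
\begin{itemize}
\item climbs from the top edge of $Q_1$ along a fibre $\{s\}\times[\sup J_1,\inf J_3]$ with $s\in\textswab{C}\cap I_1$;
\item moves horizontally inside $Q_3$ to some $t\in\textswab{C}\cap(I_2\cap I_3)\setminus I_1$;
\item descends the fibre $\{t\}\times[\sup J_2,\inf J_3]$, which lies in $\mathbf X$ by Lemma~\ref{lem3.9}.
\end{itemize}
This path joins $Q_1$ to $Q_2$ while avoiding $E$ entirely. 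By Remark~\ref{rem:properties-J}(4), its length is at most $C(\lambda,\nu)|I_1|$.

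Since $\kappa$ in \eqref{defeq:p-PIi} is the unknown constant of the putative inequality, this path lies in $\kappa B$ once $\kappa$ is large. Along it your $g$ vanishes while $u$ changes by $1$, so $g$ is not an upper gradient on $\kappa B$. Setting $u=0$ on the row neighbours of $Q_1$ only makes things worse: their own columns descend to $Q_2$ outside $E$.

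The repair is to drop the localization altogether:
\begin{itemize}
\item let $u(x,y)=\phi(y)$ on all of $\mathbf X$, with $\phi$ constant outside $[\sup J_2,\inf J_1]$;
\item let $g(x,y)=|\phi'(y)|$ on \emph{every} cube of every row in that strip.
\end{itemize}
Your vertical-projection argument then verifies the upper-gradient inequality on all of $\mathbf X$, and you may take $B=\mathbf X$. The only change in the energy is that a row of length $|I_1|\lambda^{k\nu}$ now carries $2^{m\nu+k\nu}$ cubes instead of $2^{k\nu}$, where $|I_1|=\lambda^{m\nu}$. The extra factor $2^{m\nu}$ is harmless, because $Q_1$ is fixed and only the truncation level $N$ tends to infinity. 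Alternatively, choosing $J_1$ as the topmost row of its length makes your localized construction valid as it stands.

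With that repair, your proof is correct and genuinely different from the paper's.
\begin{itemize}
\item \emph{The paper's route.} It uses single-level test functions $u_k(x,y)=\psi_k(y)$ on the fixed strip $[\inf J_0/2,\inf J_0]$, spreading the unit increment uniformly over the $N_k\approx 2^k$ rows of length $\lambda^{k\nu}$. Their energy is $\approx 2^{k[(1+\nu+2\nu\log_2\lambda)-q(1+\nu\log_2\lambda)]}$. This tends to zero only for $q<\textswab{p}_0$ and stays bounded at $q=\textswab{p}_0$, so the paper handles the endpoint with the Keith--Zhong self-improvement theorem.
\item \emph{Your route.} You reduce to $q=\textswab{p}_0$ by H\"older's inequality. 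You then distribute the increment over all levels $k\le N$ with the H\"older-extremal weights $a_k\propto w_k^{-1/(q-1)}$. Up to the fixed factor above, the energy is the truncated $K_q(Q_1,Q_2)^{-q}=|I_1|^{2-q}(N/2)^{1-q}\to 0$, since each level contributes exactly $1/2$ to the divergent series.
\end{itemize}
Your version gives an elementary proof of the endpoint case, independent of Keith--Zhong. It also makes explicit that your test functions are the extremals dual to Lemma~\ref{kp-lim} and Corollary~\ref{pos-mod}, so the threshold there is visibly sharp. The paper's route is shorter, but it cannot reach $q=\textswab{p}_0$ without that deep result.
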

\begin{proof}
First, suppose that $q<\textswab{p}_0$. Denote by $J_0$ the largest interval in $\mathcal J$, which will be of unit length with $\inf J_0>0$. Fix $k \in \mathbb N$ and set
$$
\mathcal{J}_k=\left\{ J \in \mathcal{J}: |J| = \lambda^{k\nu}, J \subset 
\Bigl[{\inf J_0}/{2},  \inf J_0\right] \Bigr\}.
$$
Let
$N_k:= \Card \mathcal{J}_k$. From the construction, $N_k \approx 2^{k}$ for large $k$. Let $J^1,\ldots, J^{N_k}$ be the intervals in $\mathcal{J}_k$ ordered such that $\sup J^i < \inf J^{i+1}$ for $i=1,2,\ldots,(N_k-1)$. There exists a monotone function $\psi_k\colon \mbbr \to \mbbr$ such that
    \begin{itemize}
        \item $\psi_k$ is piecewise-linear on $\bigcup_{i=1}^{N_k} J^i$, 
        \item $\psi_k'(y) = \frac{1}{N_k|J^i|}=\frac{1}{{N_k}\lambda^{k\nu}}$ for every $i=1,2,\ldots,(N_k-1)$, every $y\in J^i$,
        \item $\psi_k'\equiv 0$ on the complement of $\bigcup_{i=1}^{N_k}J^i$,
        \item $\psi_k(y) \equiv  0$ for all $y \leq \inf J^1$,
        \item $\psi_k(y) \equiv 1$ for all $ y \geq \sup J^{
        N_k
        }$.
        \end{itemize}      
We define $u_k\colon \mathbf{X} \to \mbbr$ by $u_k(x,y) = \psi_k (y)$. We have,
$$
    \mu (\left\{x\in \mathbf X:u_k(x) = 1 \right\}) \approx \mu (\left\{x\in \mathbf X:u_k(x) = 0 \right\}) \approx \mu(\mathbf{X}).
$$  
We fix a ball $B$ such that $B=\mathbf X$ and $\diam B \approx \diam \mathbf X$. Clearly,
    \begin{align*}
    \int_{B}|u_k-({u_k})_B|d\mu \gtrsim \min\Bigl\{\int_{\{u_k=1 \}}|u_k-{(u_k)}_\mathbf{X}|d\mu, \int_{\{u_k=0 \}}|u_k-{(u_k)}_\mathbf{X}|d\mu\Bigr\}. 
    \end{align*}
For every $k$, $(u_k)_B$ is either less that $1/2$ or more than it. Thus, from the estimates above,
\begin{equation}
\label{fayl1}
    \intavg_{B}|u_k-({u_k})_B|d\mu \gtrsim 1,
\end{equation}
where the constant is independent of $k$. 

On the other hand, one directly checks that for every $k$,
$$
g_k(x,y)= \sum_{i=1}^{N_k}\frac{1}{N_k\lambda^{k\nu}}\chi_{J^i}(y)
$$
is an upper-gradient of $u_k$.
Recall that on row $ J^i $, there are exactly $ 2^{k\nu}$ many cubes of side length $\lambda^{k\nu}$. From these considerations and $N_k\approx 2^k$, we obtain
\begin{align}
    \int_\mathbf{X} g^q d\mu =& \sum_{i=1}^{N_k} \Bigl(\frac{1}{N_k\lambda^{k\nu}}\Bigr)^q \Bigl(2^{k\nu} (\lambda^{k\nu})^2\Bigr) \notag \\
    =&N_k^{1-q}2^{k\nu}\lambda^{-k\nu q+2k\nu} \lesssim (2^k)^{(1+\nu+2\nu\log_2\lambda)-q(1+\nu\log_2\lambda)}. \label{fayl2}
    \end{align}
Condition $q < \textswab{p}_0$ is exactly the sufficient condition for the last expression to converge to zero when $k \to \infty$. Thus, for $q<\textswab{p}_0$, the combination of inequalities \eqref{fayl2} and \eqref{fayl1} rule out the possibility of a $q$-Poincar\'e inequality on $\mathbf X$. So, the proposition is proved in the case of $q<\textswab{p}_0$.

From Keith and Zhong's celebrated result~\cite{keith-zhong}, if a complete metric space, equipped with a doubling measure, supports a $p$-Poincar\'e inequality with $1<p<\infty$, then there exists $\varepsilon>0$ so that the space supports a $(p-\varepsilon)$-Poincar\'e inequality. Thus, from the previous case, we deduce that $\mathbf X$ does not support a $\textswab{p}_0$-Poincar\'e inequality either. Proof of Proposition~\ref{necmain} is complete.
\end{proof}

\section{A Quasiconformal Map on \protect\boldmath {$\mathbf X$}}
\label{sec:qc}
As before, the space $\mathbf X$ is built with respect to the Cantor sets $\textswab{C}=\textswab{C}(\lambda)$ and $\textswab{D}=\frac{1}{1-2\lambda^\nu}\textswab{C}(\lambda^\nu)$.

Let $\phi$ denote the Cantor-Vitali function associated to $\textswab{D}$. It is well-known that $\phi$ is not absolutely continuous; indeed, its restriction to any surviving interval at any step of the the construction of $\textswab{D}$ is not absolutely continuous. Define $f\colon \mathbf X \to \mathbb R^2$ by
\begin{equation}\label{eq:QC-map-f}
       f(x,y):=(x+\phi(y),y).
\end{equation}
Notice that $f$ is an isometry on each cube $I\times J$.

Set $Y:=f(\mathbf X)$ and $E:=\textswab{C}\times \textswab{D}$. We have $\dim E = {\dim \textswab{C} + \dim \textswab{D}}$ and $0< \H^{\dim E}(E) < \infty$ (see \cite{Mattila}). Moreover, the map $f$ is a homeomorphism, and $Y$ is compact and Ahlfors $2$-regular off $f(E)$.
\begin{proposition}
\label{fnotSob}
Let $q>\textswab{p}_0$. Then there exists a family $\Gamma$ of rectifiable paths in $\mathbf X$ such that $\Gamma$ has positive $q$-modulus and that the map $f$ fails to be absolutely continuous on every path in $\Gamma$. In particular, $f \notin N^{1,q}(\mathbf{X};Y)$.
\end{proposition}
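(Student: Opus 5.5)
The natural candidate is the family $\Gamma=\Gamma(Q_1,Q_2)$ from Subsection~\ref{secjo1}. Take $Q_1=I_1\times J_1$ to be any cube and $Q_2$ the first large cube below it. By Corollary~\ref{pos-mod}, applied with $p=q>\textswab{p}_0$, we get $\mod_q(\Gamma(Q_1,Q_2))\gtrsim |I_1|^{2-q}>0$. Each path $\eta_t\in\Gamma(Q_1,Q_2)$ is Lipschitz, hence rectifiable. So it remains to show that $f\circ\eta_t$ fails to be absolutely continuous for (almost) every $t$. If this fails on an exceptional set, I discard it: removing a subfamily indexed by a null set of $t$ does not change the lower bound, since the proof of Proposition~\ref{kern-eta-chim} only uses integral averages over $t$.

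The key structural fact is how $\eta_t$ crosses the rows. Fix $t$ and let $x_t\in\textswab{C}\cap I_1$ be its landing point. On each row $J\in\mathcal J(Q_1,Q_2)$, the subpath $\eta_t|_J=\gamma_{t_J}\in\Gamma(Q)$ starts at $(x_t,\sup J)$ and ends at $(x_t,\inf J)$. By continuity, $\eta_t(s)=(x_t,s)$ for every $s\in\textswab{D}\cap[\sup J_2,\inf J_1]$, because $\bigcup\mathcal J(Q_1,Q_2)$ is dense there. The second coordinate of $\eta_t(s)$ therefore ranges over a set containing $\textswab{D}\cap[\sup J_2,\inf J_1]$. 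On each cube $f$ is a translation, an isometry, so $f\circ\eta_t$ moves by exactly the Euclidean length of $\eta_t|_J$ inside $J$. Across the Cantor part, however, $f(x_t,s)=(x_t+\phi(s),s)$ picks up the horizontal displacement $\phi(\inf J_1)-\phi(\sup J_2)$, which is positive because $[\sup J_2,\inf J_1]$ contains points of $\textswab{D}$ where $\phi$ increases.

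To make the failure precise, let $S:=\{s\in[\sup J_2,\inf J_1]: s\in\textswab{D}\}$, the parameter set of $\eta_t$ (parametrized over $[\sup J_2,\inf J_1]$, a Lipschitz parametrization). Since $\dim\textswab{D}<1$, we have $\mathcal L^1(S)=0$, and $\eta_t$ being Lipschitz gives $\H^1(\eta_t(S))=0$. The arc-length parametrization changes null sets to null sets, because the length element is bounded by the Lipschitz constant. So the arc-length preimage of $\eta_t(S)$ is a null set $N$. I then compute the variation of $f\circ\eta_t$ along the path. On the complement of $N$ it is a countable union of isometric pieces, with total length at most $\ell(\eta_t)$. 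The image of the null set $N$, however, is $\{(x_t+\phi(s),s):s\in S\}$, a Cantor-type graph whose projection to the first coordinate has positive length $\phi(\inf J_1)-\phi(\sup J_2)>0$. An absolutely continuous map of an interval sends null sets to $\H^1$-null sets (Lusin N). Here $f\circ\eta_t$ maps $N$ onto a set of positive $\H^1$-measure, so it is not absolutely continuous.

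The step needing the most care is this Lusin N argument. In particular, I must verify that $\phi(\inf J_1)>\phi(\sup J_2)$, i.e.\ that the gap between $J_2$ and $J_1$ genuinely contains mass of the Cantor measure of $\textswab{D}$. This holds since that interval contains the infinitely many rows of $\mathcal J(Q_1,Q_2)$, hence points of $\textswab{D}$. I must also check that the projection argument gives positive $\H^1$-measure, using that $1$-Lipschitz projection does not increase $\H^1$. Once absolute continuity fails for every path of a positive-$q$-modulus family, Corollary~\ref{cor:fuglede} gives $f\notin N^{1,q}(\mathbf X;Y)$.
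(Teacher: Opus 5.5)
Your proposal is correct and follows the paper's own argument. It uses the same family $\Gamma(Q_1,Q_2)$, gets positive $q$-modulus from Corollary~\ref{pos-mod}, and shows that absolute continuity fails because each $\eta_t$ crosses $\textswab{D}$ vertically at the fixed Cantor abscissa $x_t$, where $\phi$ is not absolutely continuous. Your Lusin~N argument makes precise what the paper only sketches by reference to \cite{Kosk-wild}: the null set $s(S)$, where $s$ is the Lipschitz arc-length function, is mapped by the first coordinate of $f\circ\eta_t$ onto the interval $x_t+[\phi(\sup J_2),\phi(\inf J_1)]$, which has positive length.
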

\begin{proof}
Fix $\Gamma = \Gamma(Q_1,Q_2)$, the path family associated with some pair of cubes in $\mathbf X$ where $Q_2$ is the first large cube below/above $Q_1$. By Corollary \ref{pos-mod},  $\Gamma(Q_1,Q_2)$ has positive $q$-modulus. Since $\phi(y)$ is the Cantor function, we can show that $f$ is not absolutely continuous on any $\eta_t \in \Gamma(Q_1,Q_2)$. This is basically because absolute continuity of $\phi$ fails exactly along $\textswab{D}$ and each $\eta_t$ crosses the cubes along $\textswab{D}$, therefore, (the first component of) $f \circ \eta_t$ fails to be absolutely continuous (compare to the proof of~\cite[Lemma~5.1]{Kosk-wild}).
\end{proof}

\begin{proof}[Proof of Theorem~\ref{main2}]
Let $p \in (1,2)$ and $\varepsilon>0$ be given. It is enough to prove the claim for small $\eps$. We choose $\lambda \in (0,1/2)$ and $\nu \in \{2,3,\ldots\}$ such that
\begin{equation}
2-\dim \textswab{C} < \textswab{p}_0 < 2-\dim \textswab{C} + \eps.
\end{equation}
and
$$
\dim \textswab{C} + \dim \textswab{D} = 2-p.
$$
Such choices are possible due to the asymptotic behavior of $\textswab{p}_0$ (Remark~\ref{thresh}).

With choices made above for $\textswab{C}$, \textswab{D}, and $E$, we have $\dim E = 2-p$ and $\textswab{p}_0<p+\eps$.

Let $f\colon \mathbf X \to Y$ be the homeomorphism defined in~\eqref{eq:QC-map-f}. Then $H_f(x) = 1$ at every $x \notin E$, hence, a.e.\ on $\mathbf X$. The topological and Ahlfors regularity assumptions hold on $\mathbf X$ and $Y$.

Moreover, $\mathbf X$ supports a
$(p+\varepsilon)$-Poincar\'e inequality by Theorem \ref{main1}, and the exceptional set $E$ satisfies $0<\H^{2-p}(E) < \infty$. However, by Proposition~\ref{fnotSob}, $f$ fails to be absolutely continuous on $q$-a.e.\ path, and hence $f \notin N^{1,q}(\mathbf{X};Y)$, for any $q\ge p+\varepsilon$. Proof of Theorem~\ref{main2} is complete.
\end{proof}

\bibliographystyle{alpha}
\bibliography{Esmayli_Bibliography}

@book {Mattila,
    AUTHOR = {Mattila, Pertti},
     TITLE = {Geometry of sets and measures in {E}uclidean spaces},
    SERIES = {Cambridge Studies in Advanced Mathematics},
    VOLUME = {44},
      NOTE = {Fractals and rectifiability},
 PUBLISHER = {Cambridge University Press, Cambridge},
      YEAR = {1995},
     PAGES = {xii+343},
      ISBN = {0-521-46576-1; 0-521-65595-1},
   MRCLASS = {28A75 (49Q20)},
  MRNUMBER = {1333890},
MRREVIEWER = {Harold\ Parks},
       DOI = {10.1017/CBO9780511623813},
       URL = {https://doi.org/10.1017/CBO9780511623813},
}

@book {Evans-Gariepy,
  AUTHOR = {Evans, Lawrence C. and Gariepy, Ronald F.},
     TITLE = {Measure theory and fine properties of functions},
    SERIES = {Textbooks in Mathematics},
   EDITION = {Revised},
 PUBLISHER = {CRC Press, Boca Raton, FL},
      YEAR = {2015},
     PAGES = {xiv+299},
      ISBN = {978-1-4822-4238-6},
   MRCLASS = {28-01},
  MRNUMBER = {3409135},
}

@article {Shanmu:00,
    AUTHOR = {Shanmugalingam, Nageswari},
     TITLE = {Newtonian spaces: an extension of {S}obolev spaces to metric
              measure spaces},
   JOURNAL = {Rev. Mat. Iberoamericana},
  FJOURNAL = {Revista Matem\'atica Iberoamericana},
    VOLUME = {16},
      YEAR = {2000},
    NUMBER = {2},
     PAGES = {243--279},
      ISSN = {0213-2230},
   MRCLASS = {46E35},
  MRNUMBER = {1809341},
MRREVIEWER = {Daniele\ Morbidelli},
       DOI = {10.4171/RMI/275},
       URL = {https://doi.org/10.4171/RMI/275},
}

@article {graczyk-smirnov,
    AUTHOR = {Graczyk, Jacek and Smirnov, Stanislav},
     TITLE = {Non-uniform hyperbolicity in complex dynamics},
   JOURNAL = {Invent. Math.},
  FJOURNAL = {Inventiones Mathematicae},
    VOLUME = {175},
      YEAR = {2009},
    NUMBER = {2},
     PAGES = {335--415},
      ISSN = {0020-9910,1432-1297},
   MRCLASS = {37F10 (37D25 37F35 37F45)},
  MRNUMBER = {2470110},
MRREVIEWER = {Henk\ Bruin},
       DOI = {10.1007/s00222-008-0152-8},
       URL = {https://doi.org/10.1007/s00222-008-0152-8},
}

@article {przytycki-rohde,
    AUTHOR = {Przytycki, Feliks and Rohde, Steffen},
     TITLE = {Rigidity of holomorphic {C}ollet-{E}ckmann repellers},
   JOURNAL = {Ark. Mat.},
  FJOURNAL = {Arkiv f\"or Matematik},
    VOLUME = {37},
      YEAR = {1999},
    NUMBER = {2},
     PAGES = {357--371},
      ISSN = {0004-2080,1871-2487},
   MRCLASS = {37F10 (37C15 37F30)},
  MRNUMBER = {1714763},
MRREVIEWER = {Peter\ Ha\"issinsky},
       DOI = {10.1007/BF02412220},
       URL = {https://doi.org/10.1007/BF02412220},
}

@article {kozlovski-shen-strien,
    AUTHOR = {Kozlovski, O. and Shen, W. and van Strien, S.},
     TITLE = {Rigidity for real polynomials},
   JOURNAL = {Ann. of Math. (2)},
  FJOURNAL = {Annals of Mathematics. Second Series},
    VOLUME = {165},
      YEAR = {2007},
    NUMBER = {3},
     PAGES = {749--841},
      ISSN = {0003-486X,1939-8980},
   MRCLASS = {37E05 (30C10 30C62 37C20 37E20 37F10 37F30 37F50)},
  MRNUMBER = {2335796},
MRREVIEWER = {Henk\ Bruin},
       DOI = {10.4007/annals.2007.165.749},
       URL = {https://doi.org/10.4007/annals.2007.165.749},
}

@article {haissinsky,
    AUTHOR = {Haissinsky, Peter},
     TITLE = {Rigidity and expansion for rational maps},
   JOURNAL = {J. London Math. Soc. (2)},
  FJOURNAL = {Journal of the London Mathematical Society. Second Series},
    VOLUME = {63},
      YEAR = {2001},
    NUMBER = {1},
     PAGES = {128--140},
      ISSN = {0024-6107,1469-7750},
   MRCLASS = {37F15 (37F30)},
  MRNUMBER = {1802762},
MRREVIEWER = {Lei\ Tan},
       DOI = {10.1112/S0024610700001563},
       URL = {https://doi.org/10.1112/S0024610700001563},
}

@article {smania,
    AUTHOR = {Smania, Daniel},
     TITLE = {Puzzle geometry and rigidity: the {F}ibonacci cycle is
              hyperbolic},
   JOURNAL = {J. Amer. Math. Soc.},
  FJOURNAL = {Journal of the American Mathematical Society},
    VOLUME = {20},
      YEAR = {2007},
    NUMBER = {3},
     PAGES = {629--673},
      ISSN = {0894-0347,1088-6834},
   MRCLASS = {37E20 (30C62 30C65 37C15 37F25 37F45)},
  MRNUMBER = {2291915},
MRREVIEWER = {Henk\ Bruin},
       DOI = {10.1090/S0894-0347-07-00550-4},
       URL = {https://doi.org/10.1090/S0894-0347-07-00550-4},
}

@incollection {kosk-wild-survey,
    AUTHOR = {Koskela, P. and Wildrick, K.},
     TITLE = {Analytic properties of quasiconformal mappings between metric
              spaces},
 BOOKTITLE = {Metric and differential geometry},
    SERIES = {Progr. Math.},
    VOLUME = {297},
     PAGES = {163--174},
 PUBLISHER = {Birkh\"auser/Springer, Basel},
      YEAR = {2012},
      ISBN = {978-3-0348-0256-7; 978-3-0348-0257-4},
   MRCLASS = {30L10 (30C65 46E35)},
  MRNUMBER = {3220442},
       DOI = {10.1007/978-3-0348-0257-4\_6},
       URL = {https://doi.org/10.1007/978-3-0348-0257-4_6},
}

@article {Williams:14,
    AUTHOR = {Williams, Marshall},
     TITLE = {Dilatation, pointwise {L}ipschitz constants, and condition
              {$N$} on curves},
   JOURNAL = {Michigan Math. J.},
  FJOURNAL = {Michigan Mathematical Journal},
    VOLUME = {63},
      YEAR = {2014},
    NUMBER = {4},
     PAGES = {687--700},
      ISSN = {0026-2285,1945-2365},
   MRCLASS = {26B30 (30C65)},
  MRNUMBER = {3286666},
MRREVIEWER = {Daniel\ Meyer},
       DOI = {10.1307/mmj/1417799221},
       URL = {https://doi.org/10.1307/mmj/1417799221},
}

@article {lahti:Zhou:24,
    AUTHOR = {Lahti, Panu and Zhou, Xiaodan},
     TITLE = {Metric quasiconformality and {S}obolev regularity in
              non-{A}hlfors regular spaces},
   JOURNAL = {Anal. Geom. Metr. Spaces},
  FJOURNAL = {Analysis and Geometry in Metric Spaces},
    VOLUME = {12},
      YEAR = {2024},
    NUMBER = {1},
     PAGES = {Paper No. 20240001, 22},
      ISSN = {2299-3274},
   MRCLASS = {30L10 (30C65 46E36)},
  MRNUMBER = {4733790},
MRREVIEWER = {David\ Matthew\ Freeman},
       DOI = {10.1515/agms-2024-0001},
       URL = {https://doi.org/10.1515/agms-2024-0001},
}

@article {Ntala24:metric-def,
    AUTHOR = {Ntalampekos, Dimitrios},
     TITLE = {Metric definition of quasiconformality and exceptional sets},
   JOURNAL = {Math. Ann.},
  FJOURNAL = {Mathematische Annalen},
    VOLUME = {389},
      YEAR = {2024},
    NUMBER = {3},
     PAGES = {3231--3253},
      ISSN = {0025-5831,1432-1807},
   MRCLASS = {30C62 (30C65 31A15 31B15)},
  MRNUMBER = {4753085},
       DOI = {10.1007/s00208-023-02723-6},
       URL = {https://doi.org/10.1007/s00208-023-02723-6},
}

@article {Kallunki-Kosk-2003,
    AUTHOR = {Kallunki, S. and Koskela, P.},
     TITLE = {Metric definition of {$\mu$}-homeomorphisms},
   JOURNAL = {Michigan Math. J.},
  FJOURNAL = {Michigan Mathematical Journal},
    VOLUME = {51},
      YEAR = {2003},
    NUMBER = {1},
     PAGES = {141--151},
      ISSN = {0026-2285,1945-2365},
   MRCLASS = {30C62 (30C65)},
  MRNUMBER = {1960925},
MRREVIEWER = {M.\ Yu.\ Vasil\cprime chik},
       DOI = {10.1307/mmj/1049832897},
       URL = {https://doi.org/10.1307/mmj/1049832897},
}

@article {Kallunki-Kosk-2000,
    AUTHOR = {Kallunki, Sari and Koskela, Pekka},
     TITLE = {Exceptional sets for the definition of quasiconformality},
   JOURNAL = {Amer. J. Math.},
  FJOURNAL = {American Journal of Mathematics},
    VOLUME = {122},
      YEAR = {2000},
    NUMBER = {4},
     PAGES = {735--743},
      ISSN = {0002-9327,1080-6377},
   MRCLASS = {37F30 (30C65)},
  MRNUMBER = {1771571},
MRREVIEWER = {Peter\ Ha\"issinsky},
       URL =
              {http://muse.jhu.edu/journals/american_journal_of_mathematics/v122/122.4kallunki.pdf},
}

@article {Hei-Kosk-95,
    AUTHOR = {Heinonen, Juha and Koskela, Pekka},
     TITLE = {Definitions of quasiconformality},
   JOURNAL = {Invent. Math.},
  FJOURNAL = {Inventiones Mathematicae},
    VOLUME = {120},
      YEAR = {1995},
    NUMBER = {1},
     PAGES = {61--79},
      ISSN = {0020-9910,1432-1297},
   MRCLASS = {30C65 (30C70)},
  MRNUMBER = {1323982},
MRREVIEWER = {J.\ Ferrand},
       DOI = {10.1007/BF01241122},
       URL = {https://doi.org/10.1007/BF01241122},
}

@article {Haj:Ko:met,
    AUTHOR = {Haj\l{}asz, Piotr and Koskela, Pekka},
     TITLE = {Sobolev met {P}oincar\'e},
   JOURNAL = {Mem. Amer. Math. Soc.},
  FJOURNAL = {Memoirs of the American Mathematical Society},
    VOLUME = {145},
      YEAR = {2000},
    NUMBER = {688},
     PAGES = {x+101},
      ISSN = {0065-9266,1947-6221},
   MRCLASS = {46E35 (30C65 31C25 53C17 58J60)},
  MRNUMBER = {1683160},
MRREVIEWER = {Alexander\ D.\ Ukhlov},
       DOI = {10.1090/memo/0688},
       URL = {https://doi.org/10.1090/memo/0688},
}

@book {HKST:15,
    AUTHOR = {Heinonen, Juha and Koskela, Pekka and Shanmugalingam,
              Nageswari and Tyson, Jeremy T.},
     TITLE = {Sobolev spaces on metric measure spaces},
    SERIES = {New Mathematical Monographs},
    VOLUME = {27},
      NOTE = {An approach based on upper gradients},
 PUBLISHER = {Cambridge University Press, Cambridge},
      YEAR = {2015},
     PAGES = {xii+434},
      ISBN = {978-1-107-09234-1},
   MRCLASS = {30-02 (30L05 30L10 31E05 46E35)},
  MRNUMBER = {3363168},
MRREVIEWER = {David Matthew Freeman},
       DOI = {10.1017/CBO9781316135914},
       URL = {https://doi.org/10.1017/CBO9781316135914},
}

@book {Hei:01,
    AUTHOR = {Heinonen, Juha},
     TITLE = {Lectures on analysis on metric spaces},
    SERIES = {Universitext},
 PUBLISHER = {Springer-Verlag, New York},
      YEAR = {2001},
     PAGES = {x+140},
      ISBN = {0-387-95104-0},
   MRCLASS = {30C65 (28A75 28A78 46E35)},
  MRNUMBER = {1800917},
MRREVIEWER = {Christopher Bishop},
       DOI = {10.1007/978-1-4613-0131-8},
       URL = {https://doi.org/10.1007/978-1-4613-0131-8},
}

@article {Geh:63,
    AUTHOR = {Gehring, F. W.},
     TITLE = {Rings and quasiconformal mappings in space},
   JOURNAL = {Trans. Amer. Math. Soc.},
  FJOURNAL = {Transactions of the American Mathematical Society},
    VOLUME = {103},
      YEAR = {1962},
     PAGES = {353--393},
      ISSN = {0002-9947,1088-6850},
   MRCLASS = {30.47},
  MRNUMBER = {139735},
MRREVIEWER = {L.\ V.\ Ahlfors},
       DOI = {10.2307/1993834},
       URL = {https://doi.org/10.2307/1993834},
}

@article {Geh1,
    AUTHOR = {Gehring, F. W.},
     TITLE = {The definitions and exceptional sets for quasiconformal
              mappings},
   JOURNAL = {Ann. Acad. Sci. Fenn. Ser. A I},
  FJOURNAL = {Ann. Acad. Sci. Fenn. Ser. A I},
    VOLUME = {281},
      YEAR = {1960},
     PAGES = {28},
   MRCLASS = {30.47},
  MRNUMBER = {124488},
MRREVIEWER = {A.\ Pfluger},
}

@article {HeiKo-Acta,
    AUTHOR = {Heinonen, Juha and Koskela, Pekka},
     TITLE = {Quasiconformal maps in metric spaces with controlled geometry},
   JOURNAL = {Acta Math.},
  FJOURNAL = {Acta Mathematica},
    VOLUME = {181},
      YEAR = {1998},
    NUMBER = {1},
     PAGES = {1--61},
      ISSN = {0001-5962,1871-2509},
   MRCLASS = {30C65 (46E99)},
  MRNUMBER = {1654771},
MRREVIEWER = {M.\ Yu.\ Vasil\cprime chik},
       DOI = {10.1007/BF02392747},
       URL = {https://doi.org/10.1007/BF02392747},
}

@article {keith-mod,
    AUTHOR = {Keith, Stephen},
     TITLE = {Modulus and the {P}oincar\'{e} inequality on metric measure
              spaces},
   JOURNAL = {Math. Z.},
  FJOURNAL = {Mathematische Zeitschrift},
    VOLUME = {245},
      YEAR = {2003},
    NUMBER = {2},
     PAGES = {255--292},
      ISSN = {0025-5874,1432-1823},
   MRCLASS = {31C15 (46E35)},
  MRNUMBER = {2013501},
MRREVIEWER = {Jana\ Bj\"{o}rn},
       DOI = {10.1007/s00209-003-0542-y},
       URL = {https://doi.org/10.1007/s00209-003-0542-y},
}

@article {keith-zhong,
    AUTHOR = {Keith, Stephen and Zhong, Xiao},
     TITLE = {The {P}oincar\'{e} inequality is an open ended condition},
   JOURNAL = {Ann. of Math. (2)},
  FJOURNAL = {Annals of Mathematics. Second Series},
    VOLUME = {167},
      YEAR = {2008},
    NUMBER = {2},
     PAGES = {575--599},
      ISSN = {0003-486X,1939-8980},
   MRCLASS = {46E35 (30C65 43A85)},
  MRNUMBER = {2415381},
MRREVIEWER = {Jeremy\ T.\ Tyson},
       DOI = {10.4007/annals.2008.167.575},
       URL = {https://doi.org/10.4007/annals.2008.167.575},
}

@article {MacTysWil,
    AUTHOR = {Mackay, John M. and Tyson, Jeremy T. and Wildrick, Kevin},
     TITLE = {Modulus and {P}oincar\'e{} inequalities on non-self-similar
              {S}ierpi\'nski carpets},
   JOURNAL = {Geom. Funct. Anal.},
  FJOURNAL = {Geometric and Functional Analysis},
    VOLUME = {23},
      YEAR = {2013},
    NUMBER = {3},
     PAGES = {985--1034},
      ISSN = {1016-443X,1420-8970},
   MRCLASS = {30L10 (28A80 31E05)},
  MRNUMBER = {3061778},
MRREVIEWER = {Matthew\ Badger},
       DOI = {10.1007/s00039-013-0227-6},
       URL = {https://doi.org/10.1007/s00039-013-0227-6},
}

@article {Kosk-wild,
    AUTHOR = {Koskela, P. and Wildrick, K.},
     TITLE = {Exceptional sets for quasiconformal mappings in general metric
              spaces},
   JOURNAL = {Int. Math. Res. Not. IMRN},
  FJOURNAL = {International Mathematics Research Notices. IMRN},
      YEAR = {2008},
    NUMBER = {9},
     PAGES = {Art. ID rnn020, 32},
      ISSN = {1073-7928,1687-0247},
   MRCLASS = {30C65 (26B30 54E40)},
  MRNUMBER = {2429239},
MRREVIEWER = {Leonid\ V.\ Kovalev},
       DOI = {10.1093/imrn/rnn020},
       URL = {https://doi.org/10.1093/imrn/rnn020},
}

@article {bal-kosk-rog,
    AUTHOR = {Balogh, Zolt\'{a}n M. and Koskela, Pekka and Rogovin, Sari},
     TITLE = {Absolute continuity of quasiconformal mappings on curves},
   JOURNAL = {Geom. Funct. Anal.},
  FJOURNAL = {Geometric and Functional Analysis},
    VOLUME = {17},
      YEAR = {2007},
    NUMBER = {3},
     PAGES = {645--664},
      ISSN = {1016-443X,1420-8970},
   MRCLASS = {30C65 (37F10)},
  MRNUMBER = {2346270},
MRREVIEWER = {Volker\ Mayer},
       DOI = {10.1007/s00039-007-0607-x},
       URL = {https://doi.org/10.1007/s00039-007-0607-x},
}
\end{document}